\documentclass[11pt]{article}
\usepackage{amsmath, amssymb, amscd, amsthm, amsfonts}
\usepackage{graphicx}
\usepackage{hyperref}
\usepackage{graphicx}%
\usepackage{multirow}%
\usepackage{amsmath,amssymb,amsfonts}%
\usepackage{amsthm}%
\usepackage[title]{appendix}%
\usepackage{xcolor}%
\usepackage{textcomp}%
\usepackage{manyfoot}%
\usepackage{algorithm}%
\usepackage{algorithmicx}%
\usepackage{algpseudocode}%
\usepackage{listings}%
\usepackage[caption=false]{subfig}
\newtheorem{remark}{Remark}%
\usepackage{hyperref} 

\newcommand{\email}[1]{\href{mailto:#1}{\nolinkurl{#1}}}
\makeatletter
\expandafter\newcommand\csname e-mail\endcsname[1]{\href{mailto:#1}{\nolinkurl{#1}}}
\makeatother

\title{Soliton Solutions to the Curvature Flow on the 2-dimensional De Sitter Space and Applications}

\author{F\'abio Nunes da Silva\footnote{ Universidade Federal do Oeste da Bahia, Brazil,  fabionuness@ufob.edu.br} 
\qquad Edwin Salinas Reyes\footnote{ Universidade Federal do Oeste da Bahia, Brazil, edwin.reyes@ufob.edu.br}
  }

\date{}

\newtheorem{theorem}{Theorem}
\newtheorem{lemma}[theorem]{Lemma}

\newtheorem{proposition}[theorem]{Proposition}

\begin{document}

\maketitle

\begin{abstract}
We show that the spacelike solutions to the curvature flow for curves on the De Sitter space are in correspondence with the solutions to the inverse curvature flow on the 2-dimensional hyperbolic space,  that on the De Sitter space, the timelike solutions to the curvature flow are in correspondence with the timelike solutions to the inverse curvature flow, and that the solutions curve shortening flow on the 2-dimensional hyperbolic space are in correspondence with the solutions to the spacelike solutions to the inverse curvature flow on the De Sitter space. We prove that, for spacelike curves on the De Sitter space, the curvature flow is a gradient-type flow for the arc-length functional. We observe that a spacelike or timelike curve on the De Sitter space is a soliton solution to the curvature flow (resp. inverse curvature flow) if and only if its curvature (resp. inverse of its curvature) can be written as the inner product between its tangent vector field and a fixed vector $v$ of the 3-dimensional Minkowski space. We prove that for each vector $v$, there exists a 2-parameter family of timelike (spacelike) soliton solutions to the curvature flow and to the inverse curvature flow on the De Sitter space. We show that there exists no non-trivial complete timelike soliton. There exist non-trivial complete spacelike solutions. As a consequence of curvature flow, we obtain the behavior of the soliton solutions to the inverse curvature flow on the De Sitter space and hyperbolic space.
\end{abstract}
\vspace{0.5cm}
\noindent \textbf{Keywords:} Curvature Flow, Curve Shortening Flow, De Sitter Space, Soliton Solutions.
\section{Introduction}	  
    A 1-parameter family of  curves $\displaystyle \hat{X}^{t}: I \rightarrow M^{2}$, $t  \in [0, T)$  on 2-dimensional semi-Riemannian manifold $M^2$ is a \it solution to the curvature flow  \rm (CF) (respectively \it inverse curvature flow \rm (ICF)), with initial condition $X(u)$, $u\in I$, if  
\begin{equation}\label{cficf}
\left\{\begin{array}{ll}
\displaystyle{<\frac{\partial }{\partial t}\hat{X}^{t}(\cdot), \hat{N}^t(\cdot)>}=\hat{k}^{t}(\cdot)\\\
\hat{X}^{0}(\cdot)=X(\cdot),
\end{array}\right. 
\qquad 
\left( respectively  \left\{\begin{array}{ll}
\displaystyle{<\frac{\partial }{\partial t}\hat{X}^{t}(\cdot), \hat{N}^t(\cdot)>}=-\frac{1}{\hat{k}^{t}(\cdot)}\\
\hat{X}^{0}(\cdot)=X(\cdot),
\end{array}\right. \right)
\end{equation}
where $\hat{k}^{t}(\cdot)=\hat{k}(\cdot,t)$ is the curvature of $\hat{X}^{t}(\cdot)=\hat{X}(\cdot,t)$ and $\hat{N}^{t}(\cdot)=\hat{N}(\cdot,t)$ is the normal vector field of $\hat{X}^{t}(\cdot)$ for each $t\in J$. 
When $X(u)$ is a geodesic i.e. $k\equiv 0$, then the family $\hat{X}^{t}(u)=X(u)$, for all $t$,  is a {\it trivial solution} to the CF. Solutions that evolve under isometries and/or homotheties are called \it self-similar solutions, \rm and the case that evolves under isometries is called a soliton solution. \rm When $M^2$ is a 2-dimensional Riemannian manifold, the curvature flow is called \it curve shortening flow \rm and is a gradient-type flow for the length functional. But, in general, curvature flow is not gradient-type; see Halldorsson \cite{Halldorsson1} and Da Silva and Tenenblat \cite{Silva1}, who studied self-similar solutions to the curvature flow on the Minkowski plane and the 2-dimensional lightlike cone, respectively. 

In the 2-dimensional Euclidean space, the curve shortening flow has been studied by several authors. Abresch and Langer \cite{Abresch} and Halldorsson \cite{Halldorsson} gave a complete description of the self-similar solutions for curves on the plane. The self-similar solutions of curve shortening flow were very important for the study of your behaviour: Gage and Hamilton \cite{Gage3} showed that, under the flow, convex embedded closed curves in the plane preserve their convexity, become circular, and then shrink to a point; Grayson \cite{Grayson1, Grayson} proved that closed embedded curves evolve to circular shapes and subsequently collapse to a point in finite time and Angenent \cite{Angenent}, under more general conditions, proved that the flow shrink to a point in an asymptotically self-similar manner. Some authors have also studied the flow when $M^2$ differs from the plane, as seen in \cite{Gage2, Ma, Zhou}. For the self-similar solutions, Dos Reis and Tenenblat \cite{DosReis} described all the soliton solutions on the 2-dimensional sphere, Da Silva and Tenenblat \cite{Silva} also described all the soliton solutions on the 2-dimensional hyperbolic space, and Woolgar and Xie \cite{Eric} studied the self-similar solutions on the 2-dimensional hyperbolic plane.

The inverse curvature flow has also been extensively studied, both in the context of self-similar solutions and in the general setting; see \cite{Andrews, Chang, Drugan, Kroner, KWONG, Urbas1, TIAN} for a 2-dimensional Riemannian manifold. Da Silva and Tenenblat \cite{Silva1} studied self-similar solutions of curvature flow and established a relationship between curvature flow and inverse curvature flow on the 2-dimensional lightlike cone. 

 In this paper, we study the curvature flow and the inverse curvature flow on the Sitter Space. Initially, in the Theorem \ref{12}, we prove the follow relationships between CF and ICF: the spacelike solutions to the curvature flow (CF) for curves on the 2-dimensional De Sitter Space are in correspondence with the solutions to the inverse curvature curve (ICF) on the 2-dimensional Hyperbolic Space; the timelike solutions to the curvature flow (CF) for curves on the 2-dimensional De Sitter Space are in correspondence with the timelike solutions to the inverse curvature flow (ICF) on the 2-dimensional De Sitter Space and the solutions curve shortening flow on the 2-dimensional Hyperbolic Space are in correspondence with the solutions to the spacelike solutions to the inverse curvature flow (ICF) on the 2-dimensional De Sitter Space. We verify that, for spacelike curves, the flow is a gradient-type flow for the arc-length functional, in essence, the curve shortening flow. In the next, we present in the Theorem \ref{c3t2} the results: a spacelike or timelike curve on the 2-dimensional De Sitter Space is a soliton solution to the CF (respectively ICF) if and only if its curvature (respectively inverse of its curvature) can be written as the inner product between its tangent vector field and a fixed vector $v$ of the 3-dimensional Minkowski space. 
 
 We prove in Theorem \ref{c9} that for each vector $v$, there exists a 2-parameter family of timelike soliton solutions to the CF on the Sitter Space. There is no complete solution, and at each end of such a curve, the curvature is unbounded or tends to zero. As a consequence of Theorems \ref{12} and \ref{c9}, we obtain Theorem \ref{f1} about timelike soliton solutions of ICF on the Sitter Space and show that there are no complete solutions; at each end of such a curve, the curvature is unbounded or converges to zero.

 For the spacelike curves, we prove in Theorem \ref{c8} that for each vector $v$, there exists a 2-parameter family of spacelike soliton solutions to the CF in the 2-dimensional De Sitter Space. Moreover, at each end of such a curve, the curvature is either unbounded or tends to $\pm1$ or $0$. There exist non-trivial complete spacelike solutions; in this case, the curvature is bounded. As a consequence of Theorems \ref{12} and \ref{c8}, we show Theorem \ref{f2} about the soliton solutions of ICF on the 2-dimensional hyperbolic space, and that at each end of such a curve, the curvature is either unbounded or it tends to $\pm1$ or to $0$.

 We present some graphs of soliton solutions of CF and ICF on the 2-dimensional De Sitter Space and soliton solutions of ICF on the 2-dimensional hyperbolic space.

    We consider the 3-dimensional Minkowski space as $\mathbb{R}_1^{3}=(\mathbb{R}^{3}, \langle, \rangle )$, where $\mathbb{R}^3$ is the 3-dimensional vector space and $\langle, \rangle$ is the Minkowski metric defined by $\langle u,v \rangle=-u_1v_1+u_2v_2+u_3v_3.$ We define the 2-dimensional \it  De Sitter space \rm as the timelike surface $\mathbb{S}_{1}^{2}:= \{p=(p_1,p_2,p_3) \in \mathbb{R}_{1}^{3}:\langle p,p \rangle=1\}$ and the 2-dimensional \it hyperbolic space \rm as the spacelike surface $\mathbb{H}^{2}:= \{p=(p_1,p_2,p_3) \in \mathbb{R}_{1}^{3}:\langle p,p\rangle=-1, p_1>0\}.$ 
	
	Let $\displaystyle X:I\subset \mathbb{R} \rightarrow \mathbb{S}_{1}^{2}\subset \mathbb{R}_1^{3}$ be a spacelike or timelike curve parametrized by arc length $s$. The curve $X$ is characterized by a trihedrom $\{X(s),T(s),N(s)\}$, $s \in I$, where $T(s)=X^{\prime}(s)$ is the unit tangent vector field and $\langle T(s),T(s)\rangle=\epsilon$ such that $\epsilon=1$ when $X(s)$ is a spacelike curve and $\epsilon=-1$ when $X(s)$ is a timelike curve and $N(s)=X(s)\times T(s)$ is the normal vector field orthogonal to $T(s)$ and $X(s)$ such that  $\langle N(s), N(s)\rangle=-\epsilon$, for all $s\in I$ . The {\it geodesic curvature} $k(s)$ of $X(s)$, at $s \in I$, is defined by
	\begin{equation}\label{defk}
		k(s)=\langle T^{\prime}(s), N(s)\rangle.
	\end{equation}
	Moreover, $N^{\prime}(s)=-\epsilon k(s)T(s)$ and $\epsilon T^{\prime}(s)=-X(s)- k(s)N(s)$. Moreover, $N(s)\times T(s)=\epsilon X(s).$
  
  Analogously, taking $\displaystyle X:I\subset \mathbb{R} \rightarrow \mathbb{H}^{2}\subset \mathbb{R}_1^{3}$ a regular curve parametrized by arc length $s$. We denote by $T(s)=X^{\prime}(s)$ the tangent vector field, $N(s)=X(s) \times T(s)$ the unit normal vector field, $k(s)=\langle T^{\prime}(s), N(s)\rangle $ the geodesic curvature of $X$,  $N^{\prime}(s)=-k(s)T(s)$ and $T^{\prime}(s)=X(s)+k(s)N(s)$. In this case, $-\langle X(s), X(s) \rangle=\langle T(s), T(s) \rangle=\langle N(s), N(s) \rangle=1$ and $N(s)\times T(s)=X(s).$

Let $X(u)$, $u \in I$, be a regular, spacelike or timelike parameterized curve with parameter $u \in I$ on $\mathbb{S}^2_1$ or $\mathbb{H}^2$. Then the geodesic curvature $k(u)$, of $X(u)$,  at $u \in I$, is given for 
\begin{equation}\label{k}
k(u)=\frac{1}{[w(u)]^3}<X''(u),X(u)\times X'(u)>,
\end{equation} where $w(u)= \Vert X'(u)\Vert$. 

Let $\displaystyle X:I\subset \mathbb{R} \rightarrow \mathbb{S}_1^{2}\subset \mathbb{R}_1^{3}$ be a regular curve parametrized by arc length $s$ with $k(s)\neq 0$ for all $s \in I$. Taking the curve $Y(s)=N(s)$, we obtain $w(s)=|k(s)|$, $T_Y(s)=-\epsilon T(s)$ the tangent vector field, $N_Y(s)=Y(s)\times T_Y(s)=N(s)\times \left(-\epsilon T(s) \right)=-\epsilon \left(\epsilon X(s)\right)=-X(s)$   the normal vector field of curve $Y(s)$ and using the equation \eqref{k} we obtain your the geodesic curvature is given by 

\begin{eqnarray*}
k_Y(s) &=& \frac{1}{|k(s)|^{3}}\langle N^{\prime \prime}(s), N(s) \times N^{\prime}(s)\rangle\\
       &=& \frac{1}{|k(s)|^{3}}\langle -\epsilon k^{\prime}T(s)-k(s)[-X(s)- k(s)N(s)], N(s) \times -\epsilon k(s)T(s)\rangle\\
       &=& \frac{\epsilon k^{2}(s)}{|k(s)|^{3}}\langle X(s),T(s) \times N(s) \rangle \\
       &=& \frac{\epsilon k^{2}(s)}{|k(s)|^{3}}\langle X(s),-\epsilon X(s) \rangle \\
       &=& -\frac{1}{|k(s)|}.
\end{eqnarray*}
In similar manner, taking $\displaystyle X:I\subset \mathbb{R} \rightarrow \mathbb{H}^{2}\subset \mathbb{R}_1^{3}$ a regular curve parametrized by arc length $s$ with $k(s)\neq0$. Then $Y(s)=N(s)$ is a spacelike curve on $\mathbb{S}^{2}_1$, $T_Y(s)=- T(s)$ the tangent vector field, $N_Y(s)=N(s)\times (-T(s))=-X(s)$ the normal vector field of curve $Y(s)$ and your geodesic curvature is given by $k_Y(s)=-\frac{1}{|k(s)|}$. 

Note that, if $X(u)$, $u \in I$ a spacelike or timelike regular parameterized curve of parameter $u \in I$ on $\mathbb{S}^2_1$ or $\mathbb{H}^2$ with $k(s)\neq0$, then, without loss of generality, we can take $k(u)>0$, because $X(u)$ and $X(-u)$, $u \in I$  have the same set and the geodesic curvatures have opposite sign. Thus, the normal vector field of $Y(u)=N(u)$ and the geodesic curvature can be given by 
\begin{equation} \label{N}
	k_Y(u)=-\frac{1}{k(u)}\,\,\,\text{and}\,\,\, N_Y(u)=-X(u).
\end{equation}

Now, we consider $\hat{X}^{t}(u)$ a 1-parameter family of spacelike or timelike curves on $\mathbb{S}^{2}_1$ or $\mathbb{H}^{2}$, thus $\hat{N}^{t}(u)$, the normal vector field for each $t$, is a 1-parameter family of spacelike or timelike curves on $\mathbb{S}^{2}_1$ or $\mathbb{H}^{2}$. Thus, $\langle \hat{X}^{t}(u), \hat{N}^{t}(u) \rangle =0 $ and 

\begin{equation}\label{x}
\langle \frac{\partial}{\partial t}\hat{X}^{t}(u), \hat{N}^{t}(u) \rangle=- \langle \hat{X}^{t}(u), \frac{\partial}{\partial t}\hat{N}^{t}(u)\rangle.
\end{equation}
Given the foregoing, we enunciate the theorem.

\begin{theorem} \label{12}
    Let $\hat{X}^{t}(u)$ be a 1-parameter family of spacelike or timelike curves on $\mathbb{S}^{2}_1$ or $\mathbb{H}^{2}$ and, $\hat{N}^{t}(u)$, the normal vector field for each $t$ and $\hat{k}^{t}(u)\neq 0$ for all $u \in I$ and $t \in J$. Then:
    \begin{itemize}
        \item[i)] A family of spacelike curves  $\hat{X}^{t}(u)$ is a solution to the curvature flow (CF) on the 2-dimensional De Sitter Space if and only if $\hat{N}^{t}(u)$ is a solution to the inverse curvature curve (ICF) on the 2-dimensional hyperbolic space.
        \item[ii)] A family of timelike curves  $\hat{X}^{t}(u)$ is a solution to the curvature flow (CF) on the 2-dimensional De Sitter Space if and only if $\hat{N}^{t}(u)$ is a timelike solution to the inverse curvature curve (ICF) on the 2-dimensional De Sitter Space.
        \item[iii)] A family of curves  $\hat{X}^{t}(u)$ is a solution to the curvature flow (CF) on the 2-dimensional hyperbolic space if and only if $\hat{N}^{t}(u)$ is a spacelike solution to the inverse curvature curve (ICF) on the 2-dimensional De Sitter Space.
    \end{itemize}
\end{theorem}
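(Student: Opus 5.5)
The plan is to combine the pointwise duality \eqref{N} with the derivative identity \eqref{x}. Fix $t\in J$, and write $\hat{Y}^{t}(u)=\hat{N}^{t}(u)$. As in the discussion preceding the theorem, I may assume $\hat{k}^{t}(u)>0$, reversing the parameter if necessary.

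By the computation before \eqref{N}, $\hat{Y}^{t}$ is a regular curve whose normal field and geodesic curvature are
$$N_{\hat{Y}^{t}}(u)=-\hat{X}^{t}(u), \qquad k_{\hat{Y}^{t}}(u)=-\frac{1}{\hat{k}^{t}(u)}.$$
The first step is to check the target surface and the causal character of $\hat{Y}^{t}$ in each case. This uses $\langle N,N\rangle=-\epsilon$ together with $T_Y=-\epsilon T$.

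\begin{itemize}
\item[i)] If $\hat{X}^{t}$ is spacelike in $\mathbb{S}^2_1$, then $\langle \hat{N},\hat{N}\rangle=-1$, so $\hat{Y}^{t}$ lies in $\pm\mathbb{H}^2$. It lies in $\mathbb{H}^2$ after a choice of orientation or sign, which I will fix via the orientation convention so that $\hat{N}$ has positive first coordinate.
\item[ii)] If $\hat{X}^{t}$ is timelike in $\mathbb{S}^2_1$, then $\langle \hat{N},\hat{N}\rangle=1$, so $\hat{Y}^{t}\subset\mathbb{S}^2_1$. Its tangent $-\epsilon T$ is timelike, so $\hat{Y}^{t}$ is timelike.
\item[iii)] If $\hat{X}^{t}\subset\mathbb{H}^2$, the paper already notes that $\hat{Y}^{t}$ is a spacelike curve in $\mathbb{S}^2_1$.
\end{itemize}

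The second step is the flow computation itself. By \eqref{x},
$$\Big\langle \frac{\partial}{\partial t}\hat{Y}^{t},\,N_{\hat{Y}^{t}}\Big\rangle=-\Big\langle \frac{\partial}{\partial t}\hat{N}^{t},\,\hat{X}^{t}\Big\rangle=\Big\langle \frac{\partial}{\partial t}\hat{X}^{t},\,\hat{N}^{t}\Big\rangle .$$
Hence $\hat{X}^{t}$ satisfies $\langle \partial_t\hat{X}^{t},\hat{N}^{t}\rangle=\hat{k}^{t}$ if and only if
$$\langle \partial_t\hat{Y}^{t},N_{\hat{Y}^{t}}\rangle=\hat{k}^{t}=-\frac{1}{k_{\hat{Y}^{t}}},$$
which is exactly the ICF equation in \eqref{cficf} for $\hat{Y}^{t}$. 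The initial conditions correspond as well: the initial curve of the $\hat{N}$-family is $N$, the normal of $X$. This single computation covers all three items, since \eqref{x} and \eqref{N} were established uniformly for $\mathbb{S}^2_1$ and $\mathbb{H}^2$. For item iii) I will also remark that on $\mathbb{H}^2$ the CF is the curve shortening flow.

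For the converse direction, I will start from $\hat{Y}^{t}$ solving the ICF and apply the same two identities read backwards; these are symmetric equalities. It remains to recover $\hat{X}^{t}$ from $\hat{Y}^{t}$. This follows from $N_{\hat{Y}}=-\hat{X}$: $\hat{X}^{t}$ is, up to sign, the normal of $\hat{N}^{t}$. So the correspondence is bijective, and the statement is really about the pair $(\hat{X}^{t},\hat{N}^{t})$, with $\hat{k}^{t}\neq0$ guaranteeing regularity of $\hat{N}^{t}$.

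The main obstacle I expect is bookkeeping of signs and orientations. The derivation of \eqref{N} assumed $k>0$ and arc-length parametrization, whereas for a flow the parameter $u$ is not arc length and the sign of $\hat{k}^{t}$ must be fixed consistently in $t$. I will handle this as follows:
\begin{itemize}
\item Use \eqref{k}, which is parametrization-invariant up to orientation, to see that \eqref{N} holds for any regular parameter $u$.
\item Argue that, since $\hat{k}^{t}(u)\neq0$ on a connected domain $I\times J$, the sign of $\hat{k}^{t}$ is constant. One orientation choice therefore works for all $t$.
\end{itemize}
A similar continuity argument handles the choice of sheet of the hyperboloid in item i).
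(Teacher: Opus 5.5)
Your proposal is correct and is essentially the paper's proof. The paper likewise combines the duality \eqref{N} ($N_{\hat N^t}=-\hat X^t$, $k_{\hat N^t}=-1/\hat k^t$) with the identity \eqref{x}, which gives $\langle \partial_t\hat N^t,-\hat X^t\rangle=\langle\partial_t\hat X^t,\hat N^t\rangle$, so the CF equation for $\hat X^t$ is literally the ICF equation for $\hat N^t$ in all three cases at once.

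Your extra bookkeeping goes beyond what the paper writes, and one point in it needs care. Reversing $u$ flips both $\hat k^t$ and $\hat N^t$, so orientation alone cannot give you both $\hat k^t>0$ and $\hat N^t\in\mathbb{H}^2$. This costs nothing, however. When $\hat k^t<0$ one has $N_{\hat N^t}=\hat X^t$ and $k_{\hat N^t}=1/\hat k^t$, so the same computation again yields $\langle\partial_t\hat N^t,N_{\hat N^t}\rangle=-\hat k^t=-1/k_{\hat N^t}$. You can therefore fix the orientation by the sheet condition and drop the assumption $\hat k^t>0$.
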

\begin{proof}
  From equations \eqref{N} we can consider  $k_{N^t}=-\frac{1}{k^t(u)}$ and your normal vector field is given by $-\hat{X}^{t}(u)$. Thus, 
  $$k^{t}(u)=-\frac{1}{\displaystyle- \frac{1}{k^t(u)}}=\displaystyle -\frac{1}{k_{N^t}}.$$
Therefore, it follows from equation \eqref{x} that a 1-parameter family $X^t(u)$ is
a solution to the curvature flow,
$\langle \frac{\partial}{\partial t}\hat{X}^{t}(u), \hat{N}^{t}(u) \rangle=k^{t}(u),$ if and only if $\hat{N}^{t}(u)$ to the inverse curvature curve (ICF), 

  $$\langle \frac{\partial}{\partial t}\hat{N}^{t}(u), -\hat{X}^{t}(u)\rangle =\langle \frac{\partial}{\partial t}\hat{X}^{t}(u), \hat{N}^{t}(u) \rangle=k^{t}(u)=\displaystyle -\frac{1}{k_{N^t}}.$$  
\end{proof}
The definition of curvature flow was motivated by the curve shortening flow for curves on a 2-dimensional manifold $M^2$, where one considers the inner product 	$<\partial/\partial t\;\hat{X}^{t}, \hat{N}^t>=\hat{k}^{t}$ where $ \hat{N}^t$ is the unit vector field normal to the curve. This flow is a gradient-type flow for the length functional. In our case, let $\displaystyle \hat{X}: I\times J \rightarrow \mathbb{S}_{1}^{2}\subset \mathbb{R}_1^{3}$ be a  1-parameter family of spacelike closed curves, $I=[a,b]$ and $X^t(a)=X^t(b)$ for all $ t\in J$. The  length functional is given by $L(t)=\int_a^b{w(u,t)du}$, where $w(u,t)=|\frac{\partial}{\partial u}\bar{X}(u,t)|$. Using the fact that the variables $u$ and $t$ are independent, $\hat{X}^t(\cdot )$ is a solution of the curvature flow on $\mathbb{S}_1^2$ and the equation \eqref{k}, we obtain:

\begin{eqnarray*}
    \frac{d}{dt}L(t)&=&\int_a^b{\frac{1}{w(u,t)}<\frac{\partial}{\partial u}[\hat{k}(u,t) \hat{N}(u,t)],\frac{\partial}{\partial u}\hat{X}(u,t)>du}\\
    &=& \int_a^b{\frac{\partial}{\partial u}<\hat{k}(u,t) \hat{N}(u,t),\frac{1}{w(u,t)}\frac{\partial}{\partial u}\hat{X}(u,t)>du}\\
    &-&\int_a^b{\hat{k}(u,t)< \hat{N}(u,t),\frac{\partial}{\partial u}\left[\frac{1}{w(u,t)}\frac{\partial}{\partial u}\hat{X}(u,t)\right]>du}\\
     &=&-\int_a^b{\frac{1}{w(u,t)}\hat{k}(u,t)< \hat{X}(u,t)\times \frac{\partial}{\partial u}\hat{X}(u,t),\frac{\partial^2}{\partial u^2}\hat{X}(u,t)>du}\\
     &=&-\int_a^b{w^2(u,t)\hat{k}^2(u,t)du}\leq 0,
\end{eqnarray*}
that is, for spacelike curves, the curvature flow is a gradient-type flow for the arc-length functional.         
			
We investigate solutions to the CF and ICF that evolve by isometries of $\mathbb{S}_1^{2}$. We remark that an isometry of $\mathbb{S}_{1}^{2}$ is an element of the Lie group $O_1(3)$, acting on $\mathbb{R}^3_1$, that preserves $\mathbb{S}_1^{2}$. 
Let	$\displaystyle \hat{X}: I\times J \rightarrow \mathbb{S}_1^{2}$ be a solution to the CF (respectively, ICF) on $\mathbb{S}_1^{2}$, with initial condition $\displaystyle X: I \rightarrow \mathbb{S}_1^{2}$.  
The curve $X$ is a soliton solution to the CF (respectively, ICF) if $\hat{X}^{t}(s)=M(t)X(s)$,  where $M(t)$, $t \in J$ is a family of isometries of $\mathbb{S}_1^{2}$, such that $M(0)=Id$ is the identity map. 
In this context, the soliton solutions of the CF and ICF are as follows
\begin{proposition}\label{p11}
 Let $X(s)$ be a spacelike or timelike curve parametrized by arc length $s$ on $\mathbb{S}^{2}_1$ or $\mathbb{H}^{2}$ with $k(s)\neq 0$ for all $s \in I$ and $N(s)$ the normal vector field. Then, $X(s)$ is a soliton solution to the CF on $\mathbb{S}^{2}_1$ or $\mathbb{H}^{2}$ if and only if, $N(s)$ is a soliton solution to the ICF on $\mathbb{S}^{2}_1$ or $\mathbb{H}^{2}$.
\end{proposition}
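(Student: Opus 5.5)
The plan is to reduce Proposition \ref{p11} to Theorem \ref{12}, using one extra fact: isometries of $\mathbb{R}^3_1$ respect the cross product up to sign. Suppose first that $X(s)$ is a soliton solution to the CF, so $\hat{X}^t(s)=M(t)X(s)$ with $M(t)\in O_1(3)$ preserving the relevant surface and $M(0)=Id$.

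The first step is to compute the normal of the evolved curve. Since $M(t)$ is linear, $\frac{\partial}{\partial s}\hat{X}^t=M(t)T(s)$, so $s$ remains an arc-length parameter for each $t$. By the identity $(Mu)\times(Mv)=\det(M)\,M(u\times v)$ for $M\in O_1(3)$, we get
$$\hat{N}^t(s)=\hat{X}^t\times \frac{\partial}{\partial s}\hat{X}^t=\det M(t)\, M(t)N(s).$$
The map $t\mapsto \det M(t)$ is continuous, takes values in $\{\pm1\}$, and equals $1$ at $t=0$. Hence it is identically $1$ on the interval $J$, and $\hat{N}^t(s)=M(t)N(s)$.

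Next we check that $M(t)$ is an isometry of the surface containing $N$. When $X$ lies in $\mathbb{S}^2_1$ and $N$ lies in $\mathbb{S}^2_1$ or $\mathbb{H}^2$, we need $M(t)$ to preserve that surface. This is automatic for $\mathbb{S}^2_1$. For $\mathbb{H}^2$, we need $M(t)$ to preserve the sheet $p_1>0$. This holds by continuity from $M(0)=Id$, since $M(t)$ cannot move a point between the two sheets $p_1>0$ and $p_1<0$ along a continuous path in $t$. Therefore $\hat{N}^t=M(t)N$ is a family evolving by isometries of the correct ambient surface, with $\hat{N}^0=N$.

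Then Theorem \ref{12} applies to the family $\hat{X}^t$. Its curvature is $\hat{k}^t(s)=k(s)\neq0$, because the curvature is invariant under $M(t)$. In each of the cases (i)–(iii), $\hat{X}^t$ solves the CF exactly when $\hat{N}^t$ solves the ICF on the corresponding surface. Since $\hat{N}^t=M(t)N$, this says precisely that $N$ is a soliton solution to the ICF.

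For the converse, suppose $\hat{N}^t(s)=M(t)N(s)$ solves the ICF. We recover $X$ from $N$ via the relation $N_Y=-X$ in \eqref{N}, and set $\hat{X}^t:=M(t)X$. The same cross-product computation gives the normal of $\hat{X}^t$ as $M(t)N=\hat{N}^t$, so this $\hat{X}^t$ is exactly the family whose normals are $\hat{N}^t$. Theorem \ref{12} then shows that $\hat{X}^t$ solves the CF.

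The main obstacle is the sign bookkeeping. This includes the sign of $\det M(t)$, the normalization $k>0$ adopted before \eqref{N}, and the fact that the normal of $N$ is $-X$ rather than $X$. I would settle these using the continuity argument above, and by noting that reversing the orientation $s\mapsto -s$ does not affect whether a curve is a soliton, because the flow equations are invariant under simultaneously changing the signs of $k$ and $N$.
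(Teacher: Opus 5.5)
Your proposal is correct and takes the same route as the paper, whose entire proof is the observation $\hat N^t(s)=M(t)N(s)$ followed by an appeal to Theorem \ref{12}. The extra bookkeeping you supply makes precise what the paper leaves implicit: the factor $\det M(t)$ coming from $(Mu)\times(Mv)=\det(M)\,M(u\times v)$, the continuity argument in $t$ that forces $\det M(t)=1$ and keeps the sheet $p_1>0$ fixed, and the explicit converse via $\hat X^t:=M(t)X$.
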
 	
\begin{proof}
Observe that if $\hat{X}^{t}(s)=M(t)X(s)$ then $\hat{N}^{t}(s)=M(t)N(s)$. Thus, the result follows from Theorem \ref{12}. 
\end{proof}
The following result characterizes the soliton solutions to the CF and ICF. Since the isometry group is the same as the isometry group of $\mathbb{H}^2$, the proof is analogous to the characterization theorem for soliton solutions to the Curve Shortening Flow on $\mathbb{H}^{2}$, see \cite{Silva}, Theorem 2.2. We will omit the demonstration. 
	
\begin{theorem} \label{c3t2}
Let $\displaystyle X: I \rightarrow \mathbb{S}_1^{2}$ be a spacelike or timelike curve parametrized by arc length $s\in I$. Then $X$ is a soliton solution to the CF (respectively, ICF, $k(s)\neq0$) on $\mathbb{S}_1^{2}$ if, and only if, there exists a vector $v \in \mathbb{R}^{3}_1\setminus \{0\}$ such that
\begin{equation}\label{eq2}
	\langle T(s),v\rangle=k(s),\,\,\left(\text{respectively,} \,\,\langle T(s),v\rangle=\frac{1}{k(s)},\right)
\end{equation}
where $T$ is the unit tangent vector field and $k$ is the geodesic curvature  of $X$.
	\end{theorem}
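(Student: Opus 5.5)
The plan is to follow the Lie-algebraic argument used for solitons of the curve shortening flow on $\mathbb{H}^2$ in \cite{Silva}, Theorem 2.2. The isometry group here is $O_1(3)$, the same group acting on $\mathbb{R}^3_1$. Two ingredients are needed. First, every element $A$ of the Lie algebra of $O_1(3)$ (that is, $\langle Ax,y\rangle=-\langle x,Ay\rangle$) can be written as $Ax=v\times x$ for a unique $v\in\mathbb{R}^3_1$, where $\times$ is the Lorentzian cross product. Second, I need the frame identity expressing $X\times N$ in terms of $T$. From $N=X\times T$, $N\times T=\epsilon X$ and the Lorentzian double cross product formula, I expect $X\times N=cT$ with a constant $c=\pm1$, possibly times $\epsilon$. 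Fixing this sign carefully from the conventions $\langle T,T\rangle=\epsilon$, $\langle N,N\rangle=-\epsilon$ and $\langle X,X\rangle=1$ is the only genuinely computational point. Whatever its value, it is absorbed by replacing $v$ with $\pm v$.

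\emph{Necessity.} Suppose $\hat X^t(s)=M(t)X(s)$ with $M(0)=Id$ and $M$ differentiable. Since $M(t)$ is an isometry of $\mathbb{S}^2_1$, the normal field of $\hat X^t$ is $\hat N^t=M(t)N$ and the curvature is unchanged, $\hat k^t=k$. (Orientation is preserved, since $M(t)$ lies in the identity component for $t$ near $0$.) Evaluating the flow equation \eqref{cficf} at $t=0$ gives
\[
\langle M'(0)X(s),N(s)\rangle=k(s)\qquad\Bigl(\text{resp. } -\tfrac{1}{k(s)}\Bigr).
\]
Write $M'(0)x=v\times x$. Then $\langle v\times X,N\rangle=\langle v,X\times N\rangle=c\langle v,T\rangle$, which yields \eqref{eq2} after renaming $v$ to $cv$ (resp. $-cv$ in the ICF case).

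\emph{Sufficiency.} Given $v$ with $\langle T,v\rangle=k$ (resp. $1/k$), set $Ax=c^{-1}v\times x$ with the appropriate sign, and let $M(t)=\exp(tA)\in O_1(3)$, which preserves $\mathbb{S}^2_1$. Put $\hat X^t=M(t)X$. Because $A$ commutes with $M(t)$ and $M(t)$ preserves $\langle\,,\rangle$,
\[
\langle\partial_t\hat X^t,\hat N^t\rangle=\langle M(t)AX,M(t)N\rangle=\langle AX,N\rangle,
\]
which equals $k=\hat k^t$ (resp. $-1/k=-1/\hat k^t$) for every $t$. So $\hat X^t$ solves the flow and $X$ is a soliton.

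The requirement $v\neq0$ only matters when $k\equiv0$, since otherwise the relation itself forces $v\ne 0$. In the geodesic case $N$ is constant, because $N'=-\epsilon kT=0$, and $\langle T,N\rangle=0$, so I take $v=N$, which gives the trivial soliton.

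I expect the main obstacle to be the sign and $\epsilon$-bookkeeping in the identity for $X\times N$ in the two causal cases. I will settle it by a direct check at one point with an explicit frame, for instance $X=e_2$ with $T=e_3$ in the spacelike case and $T=e_1$ in the timelike case. A minor additional point is that the definition of soliton implicitly assumes $M(t)$ is differentiable in $t$; I will assume this, as the paper does.
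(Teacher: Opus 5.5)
Your proposal is correct and is essentially the argument the paper intends: the paper omits the proof and defers to Theorem 2.2 of \cite{Silva}, which is the same Lie-algebra argument. There one writes the derivative of $M(t)$ at $t=0$ (or $M(t)^{-1}M'(t)$) as $x\mapsto v\times x$, uses $\langle v\times X,N\rangle=\langle v,X\times N\rangle$, and conversely takes $M(t)=\exp(tA)$. For the sign you flagged, $\langle X\times N,T\rangle=\det(X,N,T)=\langle N\times T,X\rangle=\epsilon$ together with $\langle T,T\rangle=\epsilon$ gives $X\times N=T$ in both the spacelike and timelike cases, so $c=1$.
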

As a consequence of the characterization given by Theorem \ref{c3t2}, we can show that obtaining soliton solutions to the CF corresponds to obtaining solutions to systems of ordinary differential equations. We obtain the following theorems, which describe the soliton solutions to the CF on $\mathbb{S}^{2}_{1}$.
\begin{theorem}\label{c9}
	For any $v \in \mathbb{R}^{3}_1\setminus\{0\}$, there is a 2-parameter family of non-trivial timelike soliton solutions to the curvature flow on the 2-dimensional De Sitter space. There exists no non-trivial timelike complete soliton. The curvature function has at most one zero. Moreover, we have three cases: the curvature function has no zeros and is unbounded at both ends; the curvature function has a unique zero and is unbounded at both ends; and the curvature function has no zeros, is unbounded at one end, and converges to zero at the other.
 \end{theorem}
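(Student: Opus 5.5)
Using Theorem \ref{c3t2}, a timelike curve $X$ parametrized by arc length ($\epsilon=-1$) is a soliton to the CF exactly when $k(s)=\langle T(s),v\rangle$. I will convert this into an ODE system for the coordinates of $v$ in the moving frame. Write
\[
v=a(s)X(s)+b(s)T(s)+c(s)N(s).
\]
Since $\langle X,X\rangle=1$, $\langle T,T\rangle=-1$ and $\langle N,N\rangle=1$, we have $\langle v,X\rangle=a$, $\langle v,T\rangle=-b$ and $\langle v,N\rangle=c$. Thus $k=-b$. The frame equations with $\epsilon=-1$ are $X'=T$, $T'=X+kN$ and $N'=kT$. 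Differentiating the constant vector $v$ gives a linear system:
\[
a'+b=0,\qquad a+b'+ck=0,\qquad c'+bk=0 .
\]
Substituting $k=-b$ turns it into an autonomous system in $(a,b,c)$:
\[
a'=-b,\qquad b'=bc-a,\qquad c'=b^2 .
\]
It has the first integral $a^2-b^2+c^2=\langle v,v\rangle$. Conversely, any solution of this system gives the curvature $k=-b$. By the fundamental theorem for curves in $\mathbb{S}^2_1$, this $k$ determines a timelike curve up to isometry. Fixing $v$ and using the isometries that fix $v$, initial data on the level set $a^2-b^2+c^2=\langle v,v\rangle$ modulo the shift in $s$ give the 2-parameter family. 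Trivial solutions correspond to $b\equiv0$, which forces $a\equiv 0$, with $c$ constant.

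The qualitative statements come from the equation $c'=b^2=k^2\ge 0$, so $c$ is nondecreasing. For the zeros of $k$: if $b(s_0)=0$, then $b'(s_0)=-a(s_0)$. The case $a(s_0)=0$ gives the trivial solution by uniqueness. On the first integral, $a^2=\langle v,v\rangle+b^2-c^2$. I will show that $a$ keeps a sign along non-trivial solutions near zeros of $b$. The mechanism is that $a'=-b$ and $b'=-a+bc$: a second zero of $b$ would require $a$ to change sign. Because $c$ is monotone, the quantity $bc$ eventually dominates, and this should rule out a second zero. To make this precise I plan a phase-plane argument in the $(a,b)$-plane, with $c$ treated as increasing, and a Lyapunov-type function such as $a^2-b^2$ whose derivative is $-2b^2c$. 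Its sign is controlled once $c$ has a fixed sign, which it eventually does since it is monotone.

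For the ends I will show that the maximal interval of existence is finite on at least one side, which gives non-completeness and unboundedness of $k$. If $c\to+\infty$ as $s$ increases, then $b'\approx bc$ grows superexponentially, and together with $c'=b^2$ this gives a Riccati-type blow-up in finite $s$. The mechanism is that $c$ and $b$ mutually feed, roughly like $(c^2)'' \gtrsim c^3$. The alternative is that $c$ stays bounded, so $\int b^2<\infty$ and $b\to 0$, which gives $k\to0$. As $s$ decreases, $c$ decreases. If $c\to-\infty$, the term $bc$ forces $b$ toward zero, but the first integral then makes $a^2\approx -c^2$ impossible, so instead $b$ must blow up. Ruling out completeness means excluding $b\to0$ at both ends. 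Then $c$ would have finite limits $c_\pm$ with $c_+>c_-$, and at both ends $(a,b)\to(a_\pm,0)$ with $a_\pm^2+c_\pm^2=\langle v,v\rangle$. Since $(a_\pm,0,c_\pm)$ must be equilibria of the system, we need $a_\pm=0$, and then $c_+^2=c_-^2$ forces $c_+=-c_-$. I then expect to use the linearization at the equilibria $(0,0,c_\pm)$, which contains the block $b'=bc-a$, $a'=-b$. Its eigenvalues show that $(0,0,c_+)$ with $c_+>0$ cannot attract a non-trivial orbit in forward time. That contradiction gives the three listed cases: no zero with blow-up at both ends, one zero with blow-up at both ends, or blow-up at one end and $k\to 0$ at the other. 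The main obstacle is this asymptotic analysis: proving finite-time blow-up rather than mere unboundedness, and excluding decay of $k$ at both ends. I would handle it by a careful case split on the sign of $\langle v,v\rangle$ (timelike, spacelike or lightlike $v$), possibly normalizing $v$ by an isometry.
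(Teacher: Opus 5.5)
Your reduction $a'=-b$, $b'=bc-a$, $c'=b^2$, with first integral $a^2-b^2+c^2=\langle v,v\rangle$, is the paper's system \eqref{s2} for $\epsilon=-1$: your $(a,b,c)$ is a constant multiple of the paper's $(\alpha,-\tau,\eta)$. Your end analysis is sound in outline. The function $c$ is monotone. If $c$ is unbounded at an end, then $c'=b^2\ge c^2-\langle v,v\rangle$ gives blow-up in finite length with $|k|\to\infty$. If $c$ is bounded at an end, the solution tends to an equilibrium $(0,0,c_\pm)$ with $c_\pm^2=\langle v,v\rangle$ and $k\to 0$; to pass from $\int b^2<\infty$ to $b\to0$ you need $a$ eventually monotone.

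The step that fails is the exclusion of complete solitons for spacelike $v$. At $(0,0,c_+)$ the tangent plane of the level surface is the $(a,b)$-plane. The linearization there is $\begin{pmatrix}0&-1\\-1&c_+\end{pmatrix}$, with characteristic polynomial $\lambda^2-c_+\lambda-1$. Its roots $\tfrac12\bigl(c_+\pm\sqrt{c_+^2+4}\bigr)$ have product $-1$. So both $(0,0,\pm c_+)$ are hyperbolic saddles (Lemma \ref{lem2}), and $(0,0,c_+)$ \emph{does} attract non-trivial orbits along its one-dimensional stable manifold. These orbits are exactly the third case of the theorem. Eigenvalues therefore cannot exclude a heteroclinic orbit from $(0,0,-c_+)$ to $(0,0,c_+)$. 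That is a global statement, and your argument gives no reason for it.

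The missing idea is the sign of the second derivative at critical points, which is what the paper uses (Lemma \ref{lem25}). From your system, $b''=b'c+b+b^3$, so at any critical point of $b$ one has $b''=b(1+b^2)$. If $b=b'=0$ at some point, then $a=0$ there, which gives the trivial solution. Hence $b$ has no positive interior local maximum and no negative interior local minimum. A non-trivial $b$ therefore cannot tend to $0$ at both ends, which rules out the heteroclinic orbit and every complete non-trivial soliton.

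The same kind of argument should replace your ``$bc$ eventually dominates'' heuristic for zeros; monotonicity of $c$ is not the mechanism. Suppose $s_0<s_1$ are consecutive zeros of $b$ with, say, $b>0$ on $(s_0,s_1)$. Then $a(s_0)=-b'(s_0)<0<-b'(s_1)=a(s_1)$, while $a'=-b<0$ on $(s_0,s_1)$, a contradiction. Since $a''=a$ at a zero of $b$, the only possible critical point of $a$ is a strict global minimum with $a>0$ or a strict global maximum with $a<0$. This also gives what your list of cases omits: $k$ has no zero when it tends to $0$ at one end, because $a\to0$ there.

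Finally, going backward with $c\to-\infty$, the term $bc$ makes $|b|$ grow, not decay. The blow-up of $b$ there follows directly from $b^2=a^2+c^2-\langle v,v\rangle$.
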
 
 
\begin{theorem}\label{c8}
	For any $v \in \mathbb{R}^{3}_1\setminus\{0\}$, there is a 2-parameter family of non-trivial spacelike soliton solutions to the curvature flow on the 2-dimensional De Sitter space. The curvature function at each end is either unbounded or tends to one of the following constants $\{-1,0,1\}$. The curvature function can have no zeros, finitely many zeros, or infinitely many zeros. There exist non-trivial spacelike complete soliton solutions. Moreover, there exist non-trivial spacelike soliton solutions such that the curvature function has no zeros and is unbounded at each end. 
\end{theorem}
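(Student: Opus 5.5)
We propose to reduce the soliton condition of Theorem \ref{c3t2} to an autonomous ODE system and then analyse its phase portrait. Let $X$ be spacelike ($\epsilon=1$), parametrized by arc length, with frame $\{X,T,N\}$. The structure equations are $X'=T$, $T'=-X-kN$, $N'=-kT$. Fix $v\neq 0$ and set $a=\langle X,v\rangle$, $k=\langle T,v\rangle$, $c=\langle N,v\rangle$. Differentiating gives
$$a'=k,\qquad k'=-a-kc,\qquad c'=-k^2.$$
Since $v=aX+kT-cN$, this system has the first integral $a^2+k^2-c^2=\langle v,v\rangle$.

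Conversely, take any solution $(a,k,c)$ of this system lying on the quadric $a^2+k^2-c^2=\langle v,v\rangle$. Integrate the frame equations with this $k$ and an initial orthonormal frame chosen so that $v=a(0)X(0)+k(0)T(0)-c(0)N(0)$. A direct check shows that $\frac{d}{ds}\bigl(aX+kT-cN\bigr)=0$, so $\langle T,v\rangle=k$ holds along the curve and $X$ is a soliton by Theorem \ref{c3t2}. The initial data on the $2$-dimensional quadric, taken up to the isometries fixing $v$, then give the $2$-parameter family. Solutions with $k\not\equiv 0$ are non-trivial; we exclude the equilibria $a=k=0$, which exist only when $v$ is timelike.

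For the qualitative analysis, write $r=\sqrt{c^2+\langle v,v\rangle}$, $a=r\cos\theta$, $k=r\sin\theta$. A short computation gives
$$\theta'=-1-\tfrac{c}{2}\sin 2\theta,\qquad c'=-r^2\sin^2\theta .$$
Hence $c$ is monotone nonincreasing, and at each end it either converges or tends to $\mp\infty$.

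If $c$ converges, then $\int k^2<\infty$. Using the equation for $k'$, we expect to show that $k\to0$ at that end.

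If $|c|\to\infty$, the equation for $\theta'$ has two kinds of quasi-equilibria where $\sin2\theta\approx -2/c$. The first kind is $\theta$ close to a multiple of $\pi$; there $\sin\theta\approx -1/c$, so $k=r\sin\theta\to\pm1$ since $r\sim|c|$. The second kind is $\theta$ close to $\pi/2$ mod $\pi$; there $|k|\sim r\to\infty$.

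To establish this dichotomy rigorously, we would compare $\theta$ with the slowly varying curve $\theta_*(c)$ defined by $\sin 2\theta_*=-2/c$. One of the two branches is attracting in the direction of increasing $s$ and the other in the direction of decreasing $s$. This gives the list of limits $\{-1,0,1\}$ or $\infty$. The main obstacle is this step, and also distinguishing finite-time blow-up (an incomplete end, $|k|\to\infty$) from existence for all $s$. For the latter we would compare $c'\le -\,(c^2+\langle v,v\rangle)\sin^2\theta$ with a Riccati inequality on the unbounded branch.

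Zeros of $k$ correspond to $\theta$ crossing multiples of $\pi$. When $|c|<2$ we have $\theta'<0$ throughout, so $\theta$ rotates. Choosing $v$ and initial data so that $c$ stays in $(-2,2)$ for a long or infinite time therefore yields finitely many or infinitely many zeros. Locking onto the $\theta\approx 0$ branch, or onto the $\pi/2$ branch with the sign of $\sin\theta$ fixed, yields no zeros.

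For complete examples, we would look for solutions that exist on all of $\mathbb{R}$ with $c$ bounded or with $k\to\pm1$ at both ends. Explicit candidates are the constant-curvature solutions, namely planar sections $X\cap\{\langle X,v\rangle=\text{const}\}$ with $k$ constant and $c$ fixed, which we would check against the system; their existence shows that non-trivial complete spacelike solitons occur. Finally, for the last claim we exhibit data starting on the branch $\theta\approx\pi/2$ with $c\to+\infty$ backward and $c\to-\infty$ forward, so that $k$ has no zeros and is unbounded at both ends.
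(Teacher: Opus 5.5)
Your reduction is correct. With $(\langle X,v\rangle,k,c)=a(\alpha,\tau,\eta)$ for $v=ae$, your system is exactly \eqref{s2} with $\epsilon=1$, and the angle equation $\theta'=-1-\frac{c}{2}\sin 2\theta$ is right. Your infinitely-many-zeros argument also works, and more directly than the paper's appeal to Lemma \ref{lem3}: take timelike $v$ with $|v|<2$ and start on the component $c\ge|v|$ with $c(0)<2$; then $\theta'\le -1+c(0)/2<0$ for all $s\ge 0$.

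\textbf{The genuine gap is the existence of non-trivial complete solitons.}

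\emph{Your candidates do not exist.} If $\langle X,v\rangle$ is constant, then $k=\langle X,v\rangle'=0$. Then $k'=-\langle X,v\rangle-kc$ forces the constant to vanish, so you only get the geodesic. Also, ``$c$ fixed'' contradicts $c'=-k^2$ unless $k\equiv 0$.

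\emph{The honest constant-curvature solution does not carry the claim.} Checking constant curvature against the system, $k\equiv k_0\neq 0$ forces $k_0=\pm1$, $\langle v,v\rangle=1$, $\langle X,v\rangle=k_0(s-c_0)$ and $c=-(s-c_0)$. This is the solution of Lemma \ref{tri}(ii), a section by a plane with lightlike normal. It exists only for one length of spacelike $v$, and the paper explicitly sets it aside as trivial (proof of Lemma \ref{ll}; the hypothesis $a\neq 1$ in Lemma \ref{p31}).

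\emph{Bounded $c$ cannot help either.} If $c$ is bounded on $\mathbb{R}$, both end limits are equilibria with $c^2=-\langle v,v\rangle$ lying in the same component of the quadric. Monotonicity then forces $c$ constant, hence $k\equiv 0$.

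So, granting your end dichotomy, every non-trivial complete soliton has $|c|\to\infty$ and $k\to\pm 1$ at some end. By your own linearization, that is the branch which is \emph{repelling} in the direction of approach. Such orbits are non-generic and cannot be obtained just by choosing data; you need a topological shooting argument. This is the missing idea, and the paper supplies it in Lemma \ref{c}:
\begin{itemize}
\item in reversed time it applies Wazewski's principle to the region $\{\bar\alpha<0,\ 0<a\bar\tau<1\}$, whose egress set is two boundary pieces joined by a segment;
\item this yields an orbit staying in the strip $0<a\tau<1$ for all backward time;
\item Lemma \ref{p1} then gives $\omega_-=-\infty$ with $a\tau\to 1$, and Lemma \ref{14} gives $\omega_+=+\infty$.
\end{itemize}

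\textbf{The other steps are still heuristic.}

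\emph{End dichotomy.} The step you call the main obstacle is where the paper uses Lemmas \ref{14}, \ref{p1}, \ref{p2} and \ref{p}. Note that on the $\pi/2$ branch $c'\approx -c^2$, so $c$ is not slowly varying there. Tracking can only work because $\theta_*$ depends on $1/c$, so $d\theta_*/ds=O(1)$ while the contraction rate is of order $|c|$.

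\emph{No zeros and unbounded at both ends.} ``Data starting on the branch $\theta\approx\pi/2$'' does not by itself rule out zeros of $k$ in the middle of the curve. A concrete mechanism, which is the paper's Lemma \ref{p31}, is as follows.
\begin{itemize}
\item At a critical point of $k$ one has $k''=k(k^2-1)$, so every critical point with $k>1$ is a strict local minimum.
\item Choose $k'(0)=0$ and $k(0)>1$. This is possible exactly when $\langle v,v\rangle\ge k(0)^2$, via $(k(0)^2-1)c(0)^2=\langle v,v\rangle-k(0)^2$.
\item Then $s=0$ is the global minimum and $k>1$ on all of $I$.
\item A bounded monotone $k$ with limit $L>1$ at an end is impossible: it would give $\langle X,v\rangle\sim Ls$ and $c\sim -L^2 s$, violating the first integral. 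Hence $k$ is unbounded at both ends.
\end{itemize}
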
 

In what follows, motivated by Theorem \ref{c3t2}, we present a series of properties and lemmas to prove Theorems \ref{c9} and \ref{c8}. First, we use the equation \eqref{eq2} to establish a relationship between soliton solutions of the CF on the 2-dimensional De Sitter space and the solutions of a system of ordinary differential equations.

\begin{proposition} \label{c4p1}
	Let $X: I \rightarrow \mathbb{S}^{2}_{1}$ be a spacelike or timelike curve parametrized by arc length $s$. Consider the vectors
	\begin{equation} \label{e_i}
		e_{1}=(-1,0,0),\hspace{0,3 cm} e_{2}=(-1,1,0), \hspace{0,3 cm} \text{e}\hspace{0,3 cm} e_{3}=(0,0,1).
	\end{equation}
	For each $i \in \{1,2,3\}$, define the functions 
	\begin{equation}\label{alphai}	
		\alpha_{i}(s)=\langle X(s),e_{i}\rangle, \qquad \tau_{i}(s)=\langle T(s),e_{i}\rangle, \qquad  \eta_{i}(s)=\langle N(s),e_{i}\rangle,
	\end{equation}	
	where $T$ is the unit tangent vector with $\langle T(s), T(s)\rangle=\epsilon$ and $N$ is the normal vector field to $X$. For fixed $a>0$,
	\begin{equation*}
		k(s)=a\tau_{i}(s)=k_{i}(s)
	\end{equation*}
	is satisfied for all $s \in I$ if, and only if, the functions $\alpha_{i}(s)$, $\tau_{i}(s)$ and $\eta_{i}(s)$ satisfy the system
	\begin{equation}\label{c4e1}
		\left\{\begin{array}{lll}
			\alpha^{\prime}_{i}(s)=\tau_{i}(s),\\
			\tau^{\prime}_{i}(s)=-\epsilon \alpha_{i}(s)-\epsilon a\tau_i(s)\eta_{i}(s),\\
			\eta^{\prime}_{i}(s)=-\epsilon a\tau^{2}_{i}(s),
		\end{array}\right.
	\end{equation}
	such that 
	\begin{equation}\label{e_i2}
		\alpha^2_{i}(s)+\epsilon\tau^{2}_{i}(s)-\epsilon\eta^2_{i}(s)=
		\left\{\begin{array}{lll}
			-1, \hspace{.3 cm} \text{if} \hspace{.3 cm} i=1,\\
			0, \hspace{.3 cm} \text{if} \hspace{.3 cm} i=2,\\
			1, \hspace{.3 cm} \text{if} \hspace{.3 cm} i=3,
		\end{array}\right.
	\end{equation}
	for all $s \in I$. Moreover, $\eta^{\prime}_{i}(s)$ is a decreasing (respectively increasing ) function if $X$ is  a spacelike curve  (respectively timelike curve).
\end{proposition}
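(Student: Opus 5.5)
The plan is to differentiate the functions $\alpha_i,\tau_i,\eta_i$ using the Frenet-type equations stated earlier for curves on $\mathbb{S}^2_1$: $X'=T$, $\epsilon T'=-X-kN$ and $N'=-\epsilon kT$. Pairing these with the constant vector $e_i$ gives, with no assumption on $k$,
\[
\alpha_i'=\tau_i,\qquad \tau_i'=-\epsilon\alpha_i-\epsilon k\,\eta_i,\qquad \eta_i'=-\epsilon k\,\tau_i .
\]
Here I use $\epsilon^{-1}=\epsilon$.

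\emph{Forward direction.} If $k=a\tau_i$, substituting into the three relations gives the system \eqref{c4e1} directly.

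\emph{Converse.} Suppose $\alpha_i,\tau_i,\eta_i$, defined from $X$ by \eqref{alphai}, satisfy \eqref{c4e1}.
\begin{itemize}
\item Compare the third equation of \eqref{c4e1} with $\eta_i'=-\epsilon k\tau_i$. This gives $(k-a\tau_i)\tau_i=0$.
\item Compare the second equation with $\tau_i'=-\epsilon\alpha_i-\epsilon k\eta_i$. This gives $(k-a\tau_i)\eta_i=0$.
\item Hence $k=a\tau_i$ at every point where $(\tau_i,\eta_i)\neq(0,0)$.
\item If instead $\tau_i=\eta_i=0$ at some $s_0$, I will use that $\{X,T,N\}$ is a basis of $\mathbb{R}^3_1$ and $e_i\neq 0$. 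Then $e_i$ is a nonzero multiple of $X(s_0)$, so $\alpha_i(s_0)\neq 0$ and $\tau_i'(s_0)=-\epsilon\alpha_i(s_0)\neq0$. So such points are isolated.
\item Since $k$ and $\tau_i$ are continuous, $k=a\tau_i$ extends across these isolated points.
\end{itemize}
This isolated-zero argument is the main technical point. The rest is routine.

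\emph{Identity \eqref{e_i2}.} Expand $e_i$ in the frame $\{X,T,N\}$. Using $\langle X,X\rangle=1$, $\langle T,T\rangle=\epsilon$, $\langle N,N\rangle=-\epsilon$ and mutual orthogonality,
\[
e_i=\alpha_iX+\epsilon\tau_iT-\epsilon\eta_iN .
\]
Therefore
\[
\langle e_i,e_i\rangle=\alpha_i^2+\epsilon\tau_i^2-\epsilon\eta_i^2 .
\]
Direct computation in the Minkowski metric gives $\langle e_1,e_1\rangle=-1$, $\langle e_2,e_2\rangle=0$ and $\langle e_3,e_3\rangle=1$, which is \eqref{e_i2}. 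As a consistency check, the derivative of the left side vanishes along solutions of \eqref{c4e1}.

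\emph{Monotonicity.} From $\eta_i'=-\epsilon a\tau_i^2$ with $a>0$, $\eta_i'\le0$ when $\epsilon=1$ and $\eta_i'\ge0$ when $\epsilon=-1$. So $\eta_i$ is decreasing for spacelike curves and increasing for timelike curves. This is presumably what the statement means; its phrase ``$\eta_i'(s)$ is decreasing'' should read ``$\eta_i$ is decreasing''.
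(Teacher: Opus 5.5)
Your proof is correct. For the derivation of the system and of the identity \eqref{e_i2} it matches the paper: you pair the Frenet equations with $e_i$ and expand $e_i=\alpha_iX+\epsilon\tau_iT-\epsilon\eta_iN$. The real difference is in the converse.

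The paper argues pointwise. If $a\tau_i(s_0)\neq k(s_0)$, then $\tau_i(s_0)=\eta_i(s_0)=0$, so $\langle e_i,e_i\rangle=\alpha_i^2(s_0)$, and the paper declares this incompatible with \eqref{e_i2}. This settles $i=1$ at once, and $i=2$ once one adds $e_2\neq 0$. For $i=3$, however, it only gives $\alpha_3(s_0)=\pm1$, which contradicts nothing. Any curve passing through $(0,0,\pm1)$ has $\tau_3=\eta_3=0$ at that point, and such points do occur for solutions of the system, since $(\pm1,0,0)\in S$.

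Your argument works uniformly in $i$. Since $\tau_i'(s_0)=-\epsilon\alpha_i(s_0)\neq 0$, such points are simple zeros of $\tau_i$. Hence $k=a\tau_i$ on a punctured neighbourhood of $s_0$, and continuity gives equality at $s_0$. This is what actually handles the spacelike vector $e_3$. It costs only continuity of $k$ and one extra use of the ODE, whereas the paper's shortcut is shorter but incomplete in that case.

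You also treat \eqref{e_i2} as an automatic identity for any curve rather than as an extra hypothesis, which is legitimate since $\alpha_i,\tau_i,\eta_i$ are defined from $X$. Finally, you prove the monotonicity clause, correctly reading it as a statement about $\eta_i$ rather than $\eta_i'$; the paper's proof does not address that clause at all.
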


\begin{proof}
	The vector fields $X$, $T$ and $N$ satisfy the Frenet-Serret equations 
	\begin{equation}\label{c4e3}
		\left\{\begin{array}{lll}
			X^{\prime}(s)=T(s),\\
			T^{\prime}(s)=-\epsilon X(s)-\epsilon k(s)N(s),\\
			N^{\prime}(s)=-\epsilon k(s)T(s),
		\end{array}\right.
	\end{equation}
	for all $s \in I$. Thus, taking the inner product with $e_i$, we obtain 
	\begin{equation}
		\left\{\begin{array}{lll}\label{c4e2}
			\alpha^{\prime}_{i}(s)=\tau_{i}(s),\\
			\tau^{\prime}_{i}(s)=-\epsilon\alpha_{i}(s)- \epsilon k(s)\eta_{i}(s),\\
			\eta^{\prime}_{i}(s)=-\epsilon k(s)\tau_{i}(s).
		\end{array}\right.
	\end{equation}
	Assume that $k(s)=k_{i}(s)=a\tau_{i}(s)$ for all $s \in I$, then  \eqref{c4e1} is satisfied. Moreover, since\newline $\{X(s),T(s),N(s)\}$ is a basis for $\mathbb{R}_{1}^{3}$ for each $s$, it follows from \eqref{alphai} that  $e_i=\alpha_i(s)X(s)+\epsilon \tau_i(s)T(s)-\epsilon \eta_i(s)N(s)$ i.e. $\langle e_i,e_i\rangle=\alpha^2_{i}(s)+\epsilon\tau^{2}_{i}(s)-\epsilon\eta^2_{i}(s)$ for all $s \in I$ and for all $i \in \left\{1,2,3 \right\}$. 
	
	Conversely, suppose that the functions $\alpha_{i}(s)$, $\tau_{i}(s)$ and $\eta_{i}(s)$ satisfying \eqref{c4e1} and \eqref{e_i2} for each $i \in \{1,2,3\}$. Since \eqref{c4e2} holds, then $\left[a\tau_{i}(s)-k(s)\right]\eta_{i}(s)=0$ and 
	$\left[a\tau_{i}(s)-k(s)\right]\tau_{i}(s)=0$, 
	for all $s \in I$. Assume by contradiction that $a\tau_{i}(s_0)-k(s_0)\neq 0$ for some  $s_0 \in I$. Then $\eta_{i}(s_0)=\tau_{i}(s_0)=0$   and hence  $\langle e_i, e_i\rangle=\alpha_i^2(s_0)$, which is a contradiction with \eqref{e_i2}. Therefore, $k_{i}(s)=a\tau_{i}(s)$ for all $s \in I$.
\end{proof}
Our next proposition shows that a solution of the system \eqref{c4e1}, with initial conditions satisfying \eqref{e_i2}, is related to a soliton solution to the CF. 

\begin{proposition} \label{c4p2}
	Given a solution $(\alpha(s), \tau(s), \eta(s))$ of the system \eqref{c4e1} on some interval $I$  with fixed $a>0$,  and initial conditions $(\alpha(0),\tau(0),\eta(0))$ satisfying \eqref{e_i2}, there exists a smooth  spacelike curve or timelike curve  $X: I \rightarrow \mathbb{S}_{1}^{2}$, parametrized by arc length $s$,   such that its tangent and normal unit vetor fields $T$ and $N$  satisfy
	\begin{equation}\label{c4e5}
		\alpha(s)=\langle X(s),e\rangle,\hspace{.5 cm}\tau(s)=\langle T(s),e\rangle \hspace{.5 cm} \text{\rm e} \hspace{.5 cm} \eta(s)=\langle N(s),e\rangle, 
	\end{equation} 
	where $e=(-1,0,0)$  (respectively $e=(-1,1,0)$ and $e=(0,0,1)$).
\end{proposition}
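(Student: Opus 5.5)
Given $(\alpha,\tau,\eta)$ solving \eqref{c4e1} with $(\alpha(0),\tau(0),\eta(0))$ satisfying \eqref{e_i2}, the plan is to build the curve $X$ by integrating the Frenet--Serret system \eqref{c4e3} with the prescribed curvature $k(s)=a\tau(s)$. The key observation is that the function $k(s)=a\tau(s)$ is smooth on $I$, so the linear system
\[
X'=T,\qquad T'=-\epsilon X-\epsilon kN,\qquad N'=-\epsilon kT
\]
has a unique smooth solution on all of $I$ for any initial frame. I will first choose an initial orthonormal frame $\{X_0,T_0,N_0\}$ of $\mathbb{R}^3_1$ with $\langle X_0,X_0\rangle=1$, $\langle T_0,T_0\rangle=\epsilon$, $\langle N_0,N_0\rangle=-\epsilon$, $N_0=X_0\times T_0$, such that
\[
\langle X_0,e\rangle=\alpha(0),\qquad \langle T_0,e\rangle=\tau(0),\qquad \langle N_0,e\rangle=\eta(0).
\]
Such a frame exists because \eqref{e_i2} says exactly that the vector $w=\alpha(0)X_0+\epsilon\tau(0)T_0-\epsilon\eta(0)N_0$, written in any frame of this type, has $\langle w,w\rangle=\langle e,e\rangle$. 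Fixing one reference frame $\{E_1,E_2,E_3\}$ of the right signature, I form $w_0=\alpha(0)E_1+\epsilon\tau(0)E_2-\epsilon\eta(0)E_3$. Then $w_0$ and $e$ have the same causal character and norm, and in the null case ($e=(-1,1,0)$, $\langle e,e\rangle=0$) both are nonzero: if $\alpha(0)=\tau(0)=\eta(0)=0$ the solution is identically zero by uniqueness, which I exclude. I will also arrange that $w_0$ and $e$ lie in the same time-cone, replacing $E_1,E_2,E_3$ by their negatives if needed; this keeps the coordinates and the signature. By the transitivity of $O_1(3)$ on each level set $\{\langle w,w\rangle=c\}$ (restricted to a time-cone when $c\le0$, the standard Witt-type extension argument), there is a Lorentz transformation $L$ with $Lw_0=e$. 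I then set $X_0=LE_1$, $T_0=LE_2$, $N_0=LE_3$, composing with a reflection if needed so that $N_0=X_0\times T_0$.

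Next I will check that the solution of the Frenet system stays an adapted frame. The Gram matrix of $(X,T,N)$ satisfies a linear ODE, because the system is skew with respect to the metric, and the constant matrix $\operatorname{diag}(1,\epsilon,-\epsilon)$ solves it. By uniqueness the frame stays orthonormal with the same signature, and $\det$ is constant, so $N=X\times T$ persists. Hence $X$ lies on $\mathbb{S}^2_1$, is spacelike or timelike according to $\epsilon$, is parametrized by arc length, and has normal $N$ and curvature $\langle T',N\rangle=k=a\tau$.

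Finally, I take the functions $\tilde\alpha=\langle X,e\rangle$, $\tilde\tau=\langle T,e\rangle$, $\tilde\eta=\langle N,e\rangle$. Pairing the Frenet equations with $e$ shows that they satisfy the linear system
\[
\tilde\alpha'=\tilde\tau,\qquad \tilde\tau'=-\epsilon\tilde\alpha-\epsilon a\tau\,\tilde\eta,\qquad \tilde\eta'=-\epsilon a\tau\,\tilde\tau,
\]
where $\tau$ is the given function. The given triple $(\alpha,\tau,\eta)$ solves the same linear system by \eqref{c4e1}, since $a\tau\cdot\tau=a\tau^2$, and it has the same initial data. Uniqueness for linear ODEs then gives $(\tilde\alpha,\tilde\tau,\tilde\eta)=(\alpha,\tau,\eta)$ on $I$, which is \eqref{c4e5}.

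The main obstacle is the first step: producing the initial frame with the right signature and orientation. This is most delicate in the null case $e=(-1,1,0)$, where transitivity needs nonvanishing of $w_0$ and a choice of time-cone. I handle it as above, through the transitivity of $O_1(3)$ on level sets together with the freedom to flip the reference frame.
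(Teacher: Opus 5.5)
Your proposal is correct and follows essentially the paper's route. Both integrate the Frenet system with $k=a\tau$ from an initial frame satisfying $\alpha(0)X_0+\epsilon\tau(0)T_0-\epsilon\eta(0)N_0=e$ and then close with an ODE argument; the paper shows $\alpha X+\epsilon\tau T-\epsilon\eta N$ has zero derivative, which is the same computation as your linear-uniqueness step. Your construction of the initial frame is more careful than the paper's bare ``can be chosen'', with one fix needed: the orientation-fixing reflection must lie in the stabilizer of $e$ (e.g. $x_3\mapsto -x_3$ for $e=(-1,0,0),(-1,1,0)$ and $x_2\mapsto -x_2$ for $e=(0,0,1)$), otherwise $Lw_0=e$ is lost. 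Your exclusion of the zero datum in the null case is a restriction the statement genuinely needs.
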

\begin{proof}
	Define $k(s)=a\tau(s)$. Thus, up to isometries of $\mathbb{S}_{1}^{2}$, there exists a unique timelike  curve or spacelike curve  $\displaystyle X: I \rightarrow \mathbb{S}_{1}^{2}$, whose curvature is $k(s)$ i.e. $X(s)$ and its tangent and normal unit  vector fields $T(s)$ and $N(s)$ satisfy \eqref{c4e3}.  The curve $X(s)$ is uniquely determined by the initial conditions $X(0)$, $T(0)$ and $N(0)$, that can be chosen such that $$\alpha(0)X(0)+\epsilon \tau(0)T(0)-\epsilon \eta(0)N(0)=e,$$ 
	where $e=(-1,0,0)$ (respectively $e=(-1,1,0)$ and $e=(0,0,1)$).    A straightforward computation shows that \eqref{c4e1} and \eqref{c4e3} imply
	$$	\frac{d}{ds}\left[\alpha(s)X(s)+\epsilon \tau(s)T(s)-\epsilon \eta(s)N(s)\right]=0.$$ Therefore, \eqref{c4e5} is satisfied.
\end{proof}

\begin{remark}
\label{ob1}
	From here, suppose that $\displaystyle X: I \rightarrow \mathbb{S}_{1}^{2}$ is a timelike curve or spacelike curve parametrized by arc length $s$, such that $X(s)=(x_1(s),x_2(s), x_3(s))$ is the position vector field. The function $\alpha(s)$ defined by \eqref{c4e5} has the following geometric interpretation.
	\begin{enumerate}
	\item If $e=(-1,0,0)$ (timelike vector), then $\alpha(s)=x_1(s)$  for all $s \in I$. Moreover, $\alpha(s)$ is the Euclidean height function (with sign) for the vector $(1,0,0).$
		\item If $e=(-1,1,0)$ (lightlike vector), then $\alpha(s)=x_1(s)+x_2(s)> 0$  for all $s \in I$. Moreover, $\alpha(s)$ is the Euclidean height function (with sign) for the vector $(1,1,0).$
		\item If $e=(0,0,1)$ (spacelike vector), then $\alpha(s)=x_3(s)$ for all $s \in I$. Moreover, $\alpha(s)$ is the Euclidean height function (with sign) for the vector $(0,0,1)$
    \end{enumerate}
\end{remark}

The  \eqref{c4p1}, \eqref{c4p2}  and \eqref{ob1} show that investigating  the soliton solutions to the CF on $\mathbb{S}_{1}^{2}$  is equivalent to studying the  solutions $\psi(s)=(\alpha(s),\tau(s), \eta(s))$ of the system 
\begin{equation}\label{s2}
	\left\{\begin{array}{lll}
		\alpha^{\prime}(s)=\tau(s),\\
		\tau^{\prime}(s)=-\epsilon\alpha(s)-\epsilon a \tau(s)\eta(s),\\
		\eta^{\prime}(s)=-\epsilon a\tau^2(s),
	\end{array}\right.
\end{equation}
for each constant  $a>0$ and initial condition $\psi(0) \in H\cup C\cup S$, where
\begin{equation}
	\label{h} 
	\begin{array}{l}
		H:=\{(\alpha,\tau,\eta) \in \mathbb{R}^{3}: \alpha^2+\epsilon\tau^{2}-\epsilon\eta^2=-1\},\\	
		C:=\{(\alpha,\tau,\eta) \in \mathbb{R}^{3}\setminus \{0\}: \alpha^2+\epsilon \tau^{2}-\epsilon\eta^2=0 \}, \\
		S:=\{(\alpha,\tau,\eta) \in \mathbb{R}^{3}: \alpha^2+ \epsilon\tau^{2}-\epsilon\eta^2=1\}. 
	\end{array}	
\end{equation}
These are disjoint sets and if the initial condition $\psi(0) \in H$ (respectively $C$ or $S$) then the solution $\psi(s)$,  defined on the maximal interval $I=(\omega_{-},\omega_{+})$, will be contained in $H$ (respectively $C$ or $S$) for all $s \in I$.

In our next Lemma, we study the solutions of \eqref{s2} when $\tau(s)$ is constant. 
\begin{lemma}
	\label{tri}
	Let $\psi(s)=(\alpha(s),\tau(s),\eta(s))$ be a non-null solution of \eqref{s2} defined on the maximal interval $I$, $a>0$,  and initial condition $\psi(0)\in H\cup C\cup S$. The function $\tau(s)=b \in \mathbb{R}$, for all $s \in I$  if and only if $b \in \{-1,0,1\}$ and $I=\mathbb{R}$. Moreover,
	\begin{itemize}
		\item[i)] if $b=0$, then $\psi(s)=(0,0,\pm 1)$ is a  singular solution of \eqref{s2} and $\psi(s) \in H$ (respectively $\psi(s) \in S$) for all $s \in \mathbb{R}$, when $X$ is a spacelike curve (respectively $X$ is a timelike space),
		\item[ii)] if $X$  is a spacelike curve and $b^{2}=1$, then $a=1$ and $\psi(s)=(\pm s+\alpha(0),\pm 1,-s \mp  \alpha(0)) \in S$ for all $s \in \mathbb{R}$.
	\end{itemize}
\end{lemma}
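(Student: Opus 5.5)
We plan to substitute $\tau\equiv b$ into the system \eqref{s2} and combine the resulting relations with the first integral $\alpha^2+\epsilon\tau^2-\epsilon\eta^2=c$, $c\in\{-1,0,1\}$, which the solution preserves.

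\textbf{Necessity.} If $\tau\equiv b$, then $\alpha'=b$, so $\alpha(s)=bs+\alpha(0)$. The third equation gives $\eta'=-\epsilon a b^2$, so $\eta(s)=\eta(0)-\epsilon ab^2 s$. Since $\tau'=0$, the second equation becomes
\[
\alpha(s)+a b\,\eta(s)=0
\]
for all $s\in I$. This is a polynomial identity in $s$. Its linear coefficient gives $b-\epsilon a^2b^3=0$ and its constant term gives $\alpha(0)+ab\eta(0)=0$.

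We then split into two cases.
\begin{itemize}
\item If $b=0$, then $\alpha\equiv 0$ from the identity and $\eta\equiv\eta(0)$ constant. The constraint becomes $-\epsilon\eta(0)^2=c$. Since $C$ excludes the origin, we need $c\neq0$ and $\eta(0)=\pm1$. Then $c=-1$ (so $\psi\in H$) when $\epsilon=1$, and $c=1$ (so $\psi\in S$) when $\epsilon=-1$. This is the constant solution $(0,0,\pm1)$; as an equilibrium it is defined on $\mathbb{R}$, which gives i).
\item If $b\neq0$, then $\epsilon a^2b^2=1$. This forces $\epsilon=1$, i.e.\ the curve is spacelike, so timelike curves only admit $b=0$. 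It also gives $ab=\pm1$.
\end{itemize}

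The key step is to pin down $b$ itself rather than only $ab$, using the first integral. Write $ab=\sigma=\pm1$. Then $\alpha=-\sigma\eta$, so $\alpha^2-\eta^2=0$ and the constraint reduces to $c=\tau^2=b^2$. Since $b^2>0$ and $c\in\{-1,0,1\}$, this yields $c=1$, $b=\pm1$, $\psi\in S$, and $a=1/|b|=1$ (using $a>0$). With $a=1$ and $\epsilon=1$ we get $\alpha=\pm s+\alpha(0)$ and $\eta=-\alpha/(ab)=\mp\alpha=-s\mp\alpha(0)$, which is the formula in ii). Because $\psi$ is affine in $s$, it extends to all of $\mathbb{R}$, so the maximal interval is $I=\mathbb{R}$.

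\textbf{Sufficiency.} We verify directly that the listed functions solve \eqref{s2} on $\mathbb{R}$ with $\tau$ constant. By uniqueness of solutions of the ODE, these are exactly the solutions with the corresponding initial data.

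The main obstacle is a mild one: reading the ``if and only if'' correctly, and keeping the sign conventions of $\epsilon$ consistent across the three level sets $H$, $C$, $S$. In particular, the claim $a=1$ rests on combining $\epsilon a^2b^2=1$ with $b^2=c=1$, and it is easy to lose a sign there.
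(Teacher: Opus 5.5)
Your proof is correct and follows essentially the paper's route. You set $\tau\equiv b$, obtain affine $\alpha,\eta$ with $\alpha=-ab\eta$, and combine this with the first integral. The only difference is where $\epsilon a^2b^2=1$ comes from: you take it from the linear coefficient of $\alpha+ab\eta\equiv 0$. The paper instead reads both $a^2b^2=\epsilon$ and $\epsilon b^2=\gamma$ off the identity $(a^2b^2-\epsilon)\eta^2=\gamma-\epsilon b^2$, using that $\eta$ is non-constant. Your version also makes explicit that the origin is excluded from $C$ and that the solution extends to $I=\mathbb{R}$.
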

\begin{proof} 
	If $b=0$, it follows from \eqref{s2} that $\alpha(s)=0$ for all $s\in I$. Using \eqref{h}, we obtain $-\epsilon \eta^2(s)=\pm 1$. \\
	If $b \neq 0$, it follows from  \eqref{s2} that $\alpha(s)=-ab\eta(s)$ and $\eta(s)=-\epsilon ab^2 s+\eta(0)$ for all $s\in \mathbb{R}$. Using \eqref{h}, we conclude that 
	$$(a^2b^2-\epsilon)\eta^2(s)=\gamma -\epsilon b^2,$$
	where $\gamma \in \{-1,0,1\}$. Hence, the function $(a^2b^2-\epsilon)\eta^2(s)$ is constant for all, $s\in \mathbb{R}$, it follows that, 
	$a^2b^2=\epsilon \;\;\text{and} \;\; \epsilon b^2=\gamma$. Therefore, $\epsilon=1$, $\gamma=1$, $b=\pm 1$, $a=1$ and $\alpha (s)=\mp \eta (s)$ for all $s\in \mathbb{R}$. 
\end{proof}
The next lemma prove that $\omega_-$ and $\omega_{+}$ are finite, when $\alpha(s), \tau(s)$ and $\eta(s)$ are unbounded functions on $(\omega_{-},s_0)$ and $(s_0,\omega_+)$, $s_0 \in I$, respectively.
\begin{lemma}
	\label{p}
	Let $\psi(s)=(\alpha(s),\tau(s),\eta(s))$ be a non-trivial solution of \eqref{s2} defined on the maximal interval $I$, $a>0$, and initial condition $\psi(0)\in H\cup C\cup S$, where $H,C$ and $S$ are given by \eqref{h}. Suppose that $ \lim_{s \to \omega_{-}}|\tau(s)|=\lim_{s \to \omega_{-}}|\eta(s)|=\lim_{s \to \omega_{-}}|\alpha(s)|=+\infty$, then $\omega_{-}>+\infty$. Likewise, if $\lim_{s \to \omega_{+}}|\tau(s)|=\lim_{s \to \omega_{+}}|\alpha(s)|=\lim_{s \to \omega_{+}}|\eta(s)|=+\infty$, then $\omega_{+}<+\infty.$  
\end{lemma}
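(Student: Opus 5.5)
We prove the statement for the right end $\omega_{+}$; the left end follows in the same way after replacing $s$ by $-s$ (the typo ``$\omega_->+\infty$'' should read $\omega_->-\infty$). The idea is to show that $\tau$ satisfies a Riccati-type inequality once $|\eta|$ is large. Such an inequality forces blow-up in finite arclength. Throughout we use the system \eqref{s2} and the first integral $\alpha^2+\epsilon\tau^2-\epsilon\eta^2=\gamma\in\{-1,0,1\}$ coming from \eqref{h}.

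\textbf{Step 1: sign of $\eta$ near $\omega_+$.} From \eqref{s2}, $\eta'=-\epsilon a\tau^2$, so $\eta$ is monotone: nonincreasing if $\epsilon=1$ and nondecreasing if $\epsilon=-1$. Since $|\eta|\to+\infty$, monotonicity gives
\[
\eta\to-\infty \text{ when } \epsilon=1, \qquad \eta\to+\infty \text{ when } \epsilon=-1.
\]
In both cases $\epsilon\eta(s)\to-\infty$. Hence $-\epsilon a\eta(s)\ge 2M$ for $s\ge s_1$, for any prescribed $M>0$.

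\textbf{Step 2: control $\alpha$ relative to $\tau$.} Rewrite the $\tau$-equation as
\[
\tau'=-\epsilon\alpha+(-\epsilon a\eta)\tau .
\]
We need $|\alpha|$ to be small compared with $|\eta||\tau|$. The first integral gives $\alpha^2=\gamma-\epsilon\tau^2+\epsilon\eta^2$, so $|\alpha|\le |\tau|+|\eta|+1$. Since $|\tau|\to\infty$, for $s$ large we have $|\eta|+1\le |\eta||\tau|/4$ and $|\tau|\le |\eta||\tau|/4$. This yields $|\alpha|\le \tfrac12|\eta||\tau|$ eventually. Using $a$, we then get $|\tau'|\ge (a-\tfrac12)|\eta||\tau|$-type bounds.

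This is where the main obstacle lies: if $a\le 1/2$ the crude estimate fails. I would refine it by also using $\eta'=-\epsilon a\tau^2$. Because $|\eta|$ grows at least like $a\int\tau^2$, it dominates $|\tau|$. More precisely, $\alpha'=\tau$ gives $|\alpha|\le C+\int|\tau|$, which is $o\!\left(\int\tau^2\right)$ by Cauchy--Schwarz once $\int\tau^2\to\infty$. From this, $|\alpha|=o(|\eta||\tau|)$ along the end, provided $\tau$ does not oscillate. The sign of $\tau$ is eventually constant, because $|\tau|\to\infty$ continuously.

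\textbf{Step 3: Riccati blow-up.} With $\tau>0$ eventually (the case $\tau<0$ is symmetric), we get $\tau'\ge M\tau$ for $s\ge s_2$. This alone only gives exponential growth, so we pair it with $\eta$. Set $u=-\epsilon\eta>0$. Then
\[
u'=a\tau^2 \qquad\text{and}\qquad \tau'\ge \tfrac a2 u\tau .
\]
Consider $\phi=u^2$. Then $\phi'=2au\tau^2$. Moreover $(\tau^2)'\ge a u\tau^2$, so $(\tau^2)'\ge \tfrac12\phi'$, i.e.\ $\tau^2\ge \tfrac12\phi - C$. Substituting back gives
\[
u'\ge \tfrac a2 u^2 - aC .
\]
Solutions of $u'\ge cu^2-C'$ with $u\to\infty$ reach $+\infty$ in finite time. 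Indeed, comparison with $\int du/(cu^2-C')<\infty$ bounds $\omega_+-s_2$. Hence $\omega_+<+\infty$.
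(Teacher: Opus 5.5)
Your proof is correct, and it takes a different route from the paper.

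\textbf{The paper's route.} The paper argues through $\alpha$. By L'H\^opital it gets $\alpha/\eta\to0$, since $\alpha'/\eta'=-\epsilon/(a\tau)\to0$. It then applies L'H\^opital to $2\tau/(\alpha^2+1)$, whose derivative quotient is $\tau'/(\alpha\tau)=-\epsilon/\tau-a\epsilon\eta/\alpha$, and concludes $2\tau/(\alpha^2+1)\to\pm\infty$. Hence $|(2\arctan\alpha)'|>1$ near the end. Since $2\arctan\alpha$ stays in $(-\pi,\pi)$, the tail of the interval has length less than $2\pi$.

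\textbf{Your route.} You argue through $\eta$ instead. With $u=-\epsilon\eta\to+\infty$, the first-integral bound $|\alpha|\le1+|\tau|+|\eta|$ gives $|\alpha|=o(|\eta||\tau|)$. Hence $(\tau^2)'\ge au\tau^2=\tfrac12(u^2)'$, which yields the Riccati inequality $u'\ge\tfrac a2u^2-aC$ and forces blow-up in finite arclength.

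\textbf{What each buys.} The paper's argument is shorter and gives a uniform bound ($<2\pi$) on the length of the tail. However, it uses the hypothesis $|\alpha|\to\infty$, because the denominator $\alpha^2+1$ must tend to infinity for that L'H\^opital step. Your argument never uses it, so it proves $\omega_+<+\infty$ (resp.\ $\omega_->-\infty$) from $|\tau|,|\eta|\to\infty$ alone. That would let Lemmas \ref{un}, \ref{p1} and \ref{p2} conclude finiteness of the endpoint before, and independently of, their separate arguments that $|\alpha|$ is unbounded.

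\textbf{Two small remarks.}

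\emph{The Step~2 obstacle.} The difficulty you flag for $a\le\frac12$ is not real. Since $(1+|\tau|+|\eta|)/(|\tau||\eta|)\to0$, you get $|\alpha|\le\frac a2|\eta||\tau|$ eventually for every $a>0$, so the Cauchy--Schwarz detour can be dropped. As written, that detour would also need $s-s_0=o\bigl(\int_{s_0}^s\tau^2\bigr)$. This is true because $\tau^2\to\infty$, but it does not follow from $\int\tau^2\to\infty$ alone.

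\emph{The left end.} Reducing the left end to the right end uses the symmetry $(\alpha,\tau,\eta)(s)\mapsto(\alpha,-\tau,-\eta)(-s)$ of \eqref{s2}, not just $s\mapsto-s$.
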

\begin{proof}
	Since, $\lim_{s \to \omega_{-}}|\alpha(s)|=+ \infty$, $\lim_{s \to \omega_{-}}|\eta(s)|=+\infty$ and $\alpha^2(s)+\epsilon\tau^{2}(s)-\epsilon\eta^2(s)=\delta$, $\delta \in \{-1,0,1\}$, we obtain
	$$\lim_{s \to \omega_{-}}\frac{\alpha(s)}{\eta(s)}=\lim_{s \to \omega_{-}}\frac{\alpha ^{\prime}(s)}{\eta{\prime}(s)}=\lim_{s \to \omega_{-}}\frac{1}{-\epsilon a\tau(s)}=0,\,\,\lim_{s \to \omega_{-}}\frac{\eta(s)}{\alpha(s)}=\pm\infty,$$ $$\lim_{s \to \omega_{-}} \frac{\tau^{2}(s)}{\eta^2(s)}=\lim_{s \to \omega_{-}} \left[1-\frac{\alpha^{2}(s)}{\epsilon\eta^2(s)}+\frac{\delta}{\epsilon\eta^{2}(s)}\right]=1$$ 
	and
	$$\lim_{s \to \omega_{-}}\frac{2\tau(s)}{\alpha^{2}(s)+1}=\lim_{s \to \omega_{-}} \frac{-2\epsilon\alpha(s)-2a\epsilon\tau(s)\eta(s)}{2\alpha(s)\tau(s)}=\lim_{s \to \omega_{-}} -\frac{\epsilon}{\tau(s)}-\frac{a\epsilon\eta(s)}{\alpha(s)}=\mp\infty.$$
	
	  If $\lim_{s\to \omega_{-}}\tau(s)=+\infty$, then there exists $s_0 \in I$ such that $2\tau(s)>\alpha^2(s)+1$ for all $s<s_0$ and
	$$-2\arctan(\alpha(s))+2\arctan(\alpha(s_0))=\int_{\alpha(s)}^{\alpha(s_0)}\frac{2d\alpha}{\alpha^2+1}>-s+s_0,$$ that is, $s>2\arctan(\alpha(s))-2\arctan(\alpha(s_0))+s_0>-\pi+2\arctan(\alpha(s_0))+s_0 $. If $\lim_{s\to \omega_{-}}\tau(s)=-\infty$  then there exists $s_0 \in I$ such that $2\tau(s)<-[\alpha^2(s)+1]$ for all $s<s_0$ and
	$$-\pi+2\arctan(\alpha(s_0))<-2\arctan(\alpha(s))+2\arctan(\alpha(s_0))=\int_{\alpha(s)}^{\alpha(s_0)}\frac{2d\alpha}{\alpha^2+1}<s-s_0.$$
	 As $\tau(s)=\alpha^{\prime}(s)$ and $-\pi\leq 2\arctan(\alpha(s))\leq \pi$. Therefore, $\omega_->-\infty.$ The proof that $\omega_{+}<+\infty$ can be done analogously.
\end{proof}

We shall study the solutions of the system of ordinary differential equations \eqref{s2}, firstly considering $\epsilon = -1$, where the system is associated with a timelike soliton solution to the FC, and subsequently $\epsilon = 1$, where the system is associated with a spacelike soliton solution to the FC.

\subsection{Behavior of timelike soliton solutions to the FC}

\hspace*{\parindent} We begin with a qualitative analysis of the system \eqref{s2} for $\epsilon = -1$, which corresponds to studying a timelike soliton solution to the FC. Suppose that $\displaystyle X: I \rightarrow \mathbb{S}_{1}^{2}$ is a  timelike curve parametrized by arc length $s$, such that $T(s)=(t_1(s),t_2(s), t_3(s))$ is the unit tangent vector field satisfying $-t_1^2+t_2^2+t_3^2=-1$, i.e, $t_1(s)\neq 0$ for all $s\in I$. The function $\tau(s)$ defined by \eqref{c4e5} has the following properties. If $e=(-1,0,0)$, timelike vector, then $\tau(s)=t_1(s)>1$ or $\tau(s)=t_1(s)<-1$ for all $s \in I$. If $e=(-1,1,0)$, lightlike vector, then $\tau(s)=t_1(s)+t_2(s)>0$ or $\tau(s)=t_1(s)+t_2(s)<0$ for all $s \in I$. 

 Note also that when $X(s)$ is a timelike soliton solution with $\tau(s)=\langle T(s), (-1,0,0)\rangle>1$ (respectively $\tau(s)=\langle T(s), (-1,1,0)\rangle>0$, then $X(-s)$ timelike soliton solution with $\tau(s)=-\langle T(-s), (-1,0,0)\rangle<-1$ (respectively $\tau(s)=-\langle T(-s), (-1,1,0)\rangle<0$. In this context, when $\psi(0) \in H\cup C$, we study only the case $\tau(s)>0$. 

 We will first classify the singular points of the system \eqref{s2} at $\epsilon=-1$ that lie in the set $S$.

\begin{lemma}\label{lem2}
	Let $\Phi:S\rightarrow TS\subset \mathbb{R}^3$ be the differential vector field given by
	$$\Phi(\alpha,\tau,\eta)=(\tau,\alpha+a\tau\eta,a\tau^2)$$
	where $a > 0$. Then $P = (0, 0, 1)$ and $-P = (0, 0,-1)$ are the singular points of $\Phi$ and the eigenvalues of $d\Phi_P$ and $d\Phi_{-P}$ are given respectively by
	$$\lambda_P=\frac{a\pm\sqrt{a^2+4}}{2},\;\;\lambda_{-P}=\frac{-a\pm\sqrt{a^2+4}}{2}.$$
\end{lemma}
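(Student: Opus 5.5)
We first find the zeros of $\Phi$ restricted to $S$, and then linearize the field on the tangent plane of $S$ at each zero.

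\emph{Zeros of $\Phi$.} The vector field comes from \eqref{s2} with $\epsilon=-1$. Then $\alpha'=\tau$, $\tau'=\alpha+a\tau\eta$, $\eta'=a\tau^2$. A zero of $\Phi$ must satisfy $\tau=0$, so the third component vanishes automatically and the second component forces $\alpha=0$. With $\epsilon=-1$, the surface $S$ is given by $\alpha^2-\tau^2+\eta^2=1$. Imposing $\alpha=\tau=0$ gives $\eta^2=1$. Hence the only zeros in $S$ are $P=(0,0,1)$ and $-P=(0,0,-1)$, in agreement with Lemma \ref{tri} i).

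\emph{Tangent plane at $\pm P$.} The gradient of $\alpha^2-\tau^2+\eta^2$ at $(0,0,\pm1)$ is $(0,0,\pm2)$. So $T_{\pm P}S$ is the $(\alpha,\tau)$-plane, and $\partial_\alpha,\partial_\tau$ serve as coordinates on it. This matters because the third row of the ambient Jacobian has to be discarded. The eigenvalues we want are those of $d\Phi_{\pm P}$ as an endomorphism of $T_{\pm P}S$, not those of the full $3\times3$ matrix.

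\emph{Jacobian.} The ambient Jacobian is
$$\begin{pmatrix}0&1&0\\ 1&a\eta&a\tau\\ 0&2a\tau&0\end{pmatrix}.$$
At $(0,0,\pm1)$ the entries $a\tau$ and $2a\tau$ vanish. It therefore maps the $(\alpha,\tau)$-plane into itself by
$$\begin{pmatrix}0&1\\ 1&\pm a\end{pmatrix}.$$
The third direction gives an extra eigenvalue $0$, which is normal to $S$ and is discarded.

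\emph{Eigenvalues.} The characteristic polynomial of the $2\times2$ block is $\lambda^2\mp a\lambda-1=0$. This gives
$$\lambda_P=\frac{a\pm\sqrt{a^2+4}}{2}, \qquad \lambda_{-P}=\frac{-a\pm\sqrt{a^2+4}}{2}.$$
In both cases the two eigenvalues have product $-1$, so both $\pm P$ are hyperbolic saddles. That fact will be used later.

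The only delicate point is justifying the restriction to $T_{\pm P}S$. We check that $\Phi$ is tangent to $S$: the derivative of $\alpha^2-\tau^2+\eta^2$ along the flow is $2\alpha\tau-2\tau(\alpha+a\tau\eta)+2\eta a\tau^2=0$. This invariance shows that $d\Phi_{\pm P}$ preserves $T_{\pm P}S$, so the restriction is well defined.
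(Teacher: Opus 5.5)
Your proposal is correct and follows the paper's proof essentially step for step. You find the zeros from $\alpha=\tau=0$ and $\eta=\pm1$, identify $T_{\pm P}S$ with $\{w_3=0\}$, restrict the ambient Jacobian to the $2\times 2$ block, and solve $\lambda^2\mp a\lambda-1=0$; your explicit check that $\Phi$ is tangent to $S$, which justifies restricting $d\Phi_{\pm P}$ to $T_{\pm P}S$, is a small point the paper leaves implicit.
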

\begin{proof}
	Note that if $\Phi(\alpha,\tau,\eta)=0$, then $\alpha=\tau=0$ and $\eta=\pm 1$. Hence, $P=(0,0,1)$ and $-P=(0,0,-1)$ are the singular points of $\Phi$. The tangent plane at each singular point is defined by $T_{\pm P}S\approx\{(w_1,w_2,w_3) \in \mathbb{R}^{3}: w_3=0\}$. Thus,
	\begin{equation*}
		d\Phi=\left(\begin{array}{ccc}
			0                       & 1          & 0  \\
			1           & a\eta  & a\tau \\
			0                       & 2a\tau &  0
		\end{array}\right).
	\end{equation*}
	The number $\lambda$ is an eigenvalue of $d\Phi_{\pm P}$ if there is a non null vector $w \in T_{\pm P} S$ such that $d\Phi_{\pm p}(w)=\lambda w$ i.e.
	\begin{equation*}
		\left(\begin{array}{ccc}
			0                       & 1          & 0  \\
			1                      & \pm a  & 0 \\
			0                       & 0 &  0
		\end{array}\right)\left(\begin{array}{lll}
			w_{1}\\
			w_{2}\\
			0                      
		\end{array}\right)=\lambda (w_{1},w_{2},0).
	\end{equation*}
	Therefore,
	\begin{equation*}
		\left\{\begin{array}{cc}
			w_{2}=\lambda w_{1},\\
			w_{1}\pm aw_{2}=\lambda w_{2}.
		\end{array}\right. 
	\end{equation*}
	Hence, $\lambda$ satisfy $\lambda^{2} \mp a\lambda-1=0$ and
	
	\begin{equation*} 
		\lambda_{P}=\frac{a\pm \sqrt{a^2+4}}{2}\hspace{0.5 cm} \text{e} \hspace{0.5 cm} \lambda_{-P}=\frac{-a\pm \sqrt{a^2+4}}{2}.
	\end{equation*}
\end{proof}

In Lemma \ref{tri}, we saw that $(0, 0,\pm1)$ are saddle points for the vector field $\Phi$ on S, i.e.,  $\psi(s) =(0, 0,\pm 1)$, $s\in I$ are singular solutions of \ref{s2}. If the functions $\alpha$ and $\tau$ are identically zero, then
the corresponding curve $X(s)$ is the intersection of the Sitter sphere with the plane going through the origin, orthogonal to $(0, 0, 1)$. Hence, both singular solutions of the system correspond to the same curve.

To study non-trivial solutions of the system \eqref{s2}, we consider the singular point $P = (0, 0, 1)$ and a solution $\psi(s, q)$ of the system with initial condition $q \in S$. Since the eigenvalues of the linearized system at the singular point are non-zero, it follows that the local behavior of the system \eqref{s2} is equivalent to that of the linearized system. Hence, there exist initial conditions $Q, \overline{Q} \in S \setminus \{P\}$ such that $\lim_{s \to +\infty} \psi(s, Q)=P$ and $\lim_{s \to -\infty} \psi(s, \overline{Q})=P$. We define the unstable and stable sets as
\begin{equation} \label{w}
	\displaystyle W^{i}(P)=\{Q \in S: \lim_{s \to -\infty}\psi(s,Q)=P\}\,\,\,\, \text{and}\,\,\,\,
	\displaystyle W^{e}(P)=\{Q\in S: \lim_{s \to +\infty}\psi(s,Q)=P\}.
\end{equation}
Observe that the set $C$ is not closed and that $(0,0,0)$ is a singular point of \eqref{s2}. Still,  this point is not a attractor or repulsor, because $\alpha(s)$ and $\eta(s)$ are increasing functions and taking $\alpha(s_0)>0$, $\eta(s_0)<0$, then $\lim_{s \to \omega_{+}}\alpha(s)\neq 0$ and $\lim_{s \to \omega_{-}}\eta(s)\neq 0$. The next lemma, we analyze the case where $\tau(s)$ does not have critical point and $\psi(0) \in C.$ 
\begin{lemma}\label{e3}
Let $\psi(s)=(\alpha(s),\tau(s),\eta(s))$ be a non-trivial solution of \eqref{s2} defined on the maximal interval $I$, $a>0$, $\epsilon=-1$ and the initial condition $\psi(0)\in C $, where $C$ is given by \eqref{h}. If $\tau(s)$ is monotone on $I$ then either $\lim_{s\to-\infty}\psi(s)=(0,0,0)$ or $\lim_{s\to+\infty}\psi(s)=(0,0,0)$.
\end{lemma}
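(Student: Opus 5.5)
The plan is to exploit the cone relation, which for $\epsilon=-1$ reads $\tau^2(s)=\alpha^2(s)+\eta^2(s)$, together with the monotonicity of $\alpha$ and $\eta$ coming from $\alpha'=\tau$, $\eta'=a\tau^2$. By the convention fixed above for $\psi(0)\in H\cup C$, we may assume $\tau(s)>0$.

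First we check that $\tau$ never vanishes. If $\tau(s_1)=0$, the cone relation forces $\alpha(s_1)=\eta(s_1)=0$. Then $\psi$ passes through the singular point $(0,0,0)$, and by uniqueness $\psi\equiv 0$, contradicting $\psi(0)\in C$. So $\tau>0$ on $I$, and consequently $\alpha$ and $\eta$ are strictly increasing. Moreover $|\alpha|,|\eta|\le \tau$ everywhere. This gives the key boundedness mechanism: on any subinterval where $\tau$ is bounded, the whole solution $\psi$ stays in a compact set. By the standard extension theorem for ODEs, the maximal interval cannot end there at a finite time.

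We then split into two cases according to the direction of monotonicity of $\tau$.

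\emph{Case 1: $\tau$ nonincreasing.} On $[0,\omega_+)$ we have $0<\tau\le\tau(0)$, so $\psi$ is bounded there and hence $\omega_+=+\infty$. Since $\tau$ is monotone and positive, $L=\lim_{s\to+\infty}\tau(s)\ge 0$ exists. Suppose $L>0$. Then $\eta'\ge aL^2$ gives $\eta\to+\infty$, so $\tau^2=\alpha^2+\eta^2\to+\infty$, contradicting $\tau\le\tau(0)$. Hence $L=0$. Then $\alpha^2+\eta^2=\tau^2\to0$, which yields $\lim_{s\to+\infty}\psi(s)=(0,0,0)$.

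\emph{Case 2: $\tau$ nondecreasing.} The argument is symmetric on $(\omega_-,0]$. There $0<\tau\le\tau(0)$, so $\omega_-=-\infty$ and $L=\lim_{s\to-\infty}\tau(s)\ge0$ exists. If $L>0$, then $\eta'\ge aL^2$ forces $\eta\to-\infty$ as $s\to-\infty$, again contradicting the bound $\tau^2=\alpha^2+\eta^2\le\tau(0)^2$. Therefore $L=0$, and $\psi(s)\to(0,0,0)$ as $s\to-\infty$.

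The only delicate points are the following. First, one must justify that $\tau$ stays positive, which is done via uniqueness at the equilibrium $(0,0,0)$; this rules out reaching the origin in finite time. Second, one must pass from boundedness of $\psi$ on a half-interval to infinite length of that half-interval, which is the standard blow-up alternative. Once these are in place, the contradiction argument for $L>0$ is routine.
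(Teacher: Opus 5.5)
Your proof is correct and follows the same route as the paper: the cone relation $\tau^2=\alpha^2+\eta^2$ bounds $\psi$ on the half-line where the monotone, positive $\tau$ is bounded, maximality then gives an infinite end, and $\tau\to 0$ forces $\psi\to(0,0,0)$. The only differences are that you justify $\tau\neq 0$ via uniqueness at the equilibrium $(0,0,0)$ rather than taking it as a convention, and you replace the paper's appeal to Barbalat's lemma with the elementary observation that a positive limit $L$ of $\tau$ would make $\eta'=a\tau^2\ge aL^2$ drive $\eta$ unbounded, contradicting $\eta^2\le\tau(0)^2$.
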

\begin{proof}
	Initially, we consider $\tau(s)$ decreasing on $I$. As $\tau(s)>0$ for all $s \in I$, $\tau(s)$ is bounded on $(s_0, \omega_+)$, interval $I$ is maximal and $\alpha^{2}(s)+\eta^{2}(s)=\tau^{2}(s)$, then $\omega_{+} = +\infty$. By Barbalat's Lemma, we have $\lim_{s \to +\infty} \tau(s) = 0$. Consequently, we obtain $\lim_{s \to +\infty} \alpha(s) = \lim_{s \to +\infty} \eta(s) = 0$. If $\tau$ is strictly increasing, the proof follows analogously.
\end{proof}
From Lemma \eqref{e3}, we define the unstable and stable sets as
\begin{eqnarray} \label{w1}
	\displaystyle W^{i}(0)&=&\{Q \in C: \lim_{s \to -\infty}\psi(s,Q)=(0,0,0)\}\,\,\,\, \text{and}\,\,\,\, \nonumber\\
	\displaystyle W^{e}(0)&=&\{Q\in C: \lim_{s \to +\infty}\psi(s,Q)=(0,0,0)\}.
\end{eqnarray}

The next lemma provides details on the critical points of $\alpha(s)$ and $\eta(s)$ for a non-trivial solution of the system \eqref{s2} with $\epsilon=-1$. 

\begin{lemma}
	\label{lem25}
	Let $\psi(s)=(\alpha(s),\tau(s),\eta(s))$ be a non-trivial solution of \ref{s2} defined on the maximal interval $I$, $a>0$, $\epsilon=-1$ and the initial condition $\psi(0)\in H\cup C \cup S$, where $H$, $C$ and $S$ are given by \eqref{h}. Let $\psi(0) \in S$ be, if $s_0 \in I$ is a critical point of $\alpha(s)$, then $s_{0}$ is the global minimum (respectively global maximum) of $\alpha(s)$ if and only if $\alpha(s_{0})>0$ (respectively $\alpha(s_{0})<0$). If $s_0$ is a critical point of $\tau(s)$, then  $s_0$ is a global minimum (respectively global maximum) point of $\tau(s)$ if and only if $\tau(s_0)>0$ (respectively $\tau(s_0)<0$). Moreover, there are always $s_1,s_2$ such that the functions $\alpha(s)$ and $\tau(s)$ are monotone on the intervals $(\omega_{-},s_1)$ and $(s_2, \omega_{+}).$ 
\end{lemma}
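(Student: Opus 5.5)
The plan is to use the second-order information that \eqref{s2} gives at critical points, together with a ``first return'' argument that excludes a second critical point. Throughout, $\epsilon=-1$, so the system reads $\alpha'=\tau$, $\tau'=\alpha+a\tau\eta$, $\eta'=a\tau^2$. Since $\psi(0)\in S$, we also have $\alpha^2-\tau^2+\eta^2=1$.

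\textbf{Step 0 (nondegeneracy).} First I rule out degenerate critical points. If at some $s_0$ both $\alpha(s_0)=0$ and $\tau(s_0)=0$, then the constraint forces $\eta(s_0)=\pm1$. So $\psi(s_0)=\pm P$ is a singular point of $\Phi$ (Lemma \ref{lem2}), and by uniqueness $\psi$ is the trivial solution, which is excluded. Hence at a critical point of $\alpha$ we have $\alpha(s_0)\neq 0$. Likewise, at a critical point of $\tau$ we have $\tau(s_0)\neq 0$: if $\tau(s_0)=\tau'(s_0)=0$, then $\alpha(s_0)=0$ and we are back in the previous case.

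\textbf{Step 1 (critical points of $\alpha$).} If $\tau(s_0)=0$, then $\alpha''(s_0)=\tau'(s_0)=\alpha(s_0)$.

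So $\alpha(s_0)>0$ gives a strict local minimum. To make it global, suppose there is a first $s_1>s_0$ with $\tau(s_1)=0$, so that $\tau>0$ on $(s_0,s_1)$.
\begin{itemize}
\item Then $\alpha(s_1)>\alpha(s_0)>0$, so $\tau'(s_1)=\alpha(s_1)>0$.
\item But $\tau$ decreases to $0$ at $s_1$, so $\tau'(s_1)\le 0$. This is a contradiction.
\end{itemize}
Symmetrically, suppose there is a last zero $s_1<s_0$ of $\tau$, with $\tau<0$ on $(s_1,s_0)$.
\begin{itemize}
\item Then $\alpha(s_1)>\alpha(s_0)>0$, so $\tau'(s_1)>0$.
\item But $\tau$ passes from $0$ to negative values at $s_1$, so $\tau'(s_1)\le 0$. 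This is again a contradiction.
\end{itemize}
Thus $\alpha$ is strictly decreasing on $(\omega_-,s_0)$ and strictly increasing on $(s_0,\omega_+)$, and $s_0$ is the global minimum. The case $\alpha(s_0)<0$ is identical with inequalities reversed, and gives a global maximum.

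For the converse: if $s_0$ is a global minimum, then $\alpha''(s_0)=\alpha(s_0)\ge 0$, and Step 0 excludes equality, so $\alpha(s_0)>0$. The same reasoning shows a global maximum forces $\alpha(s_0)<0$.

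\textbf{Step 2 (critical points of $\tau$).} Differentiating $\tau'$ gives
$$\tau''=\tau+a\tau'\eta+a^2\tau^3 .$$
At a critical point this becomes $\tau''(s_0)=\tau(s_0)\bigl(1+a^2\tau(s_0)^2\bigr)$, which has the sign of $\tau(s_0)$ and is nonzero by Step 0.

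I repeat the first-return argument. Suppose $\tau(s_0)>0$ and $s_1>s_0$ is the next critical point, with $\tau'>0$ on $(s_0,s_1)$.
\begin{itemize}
\item Then $\tau(s_1)>\tau(s_0)>0$, so $\tau''(s_1)>0$.
\item But $\tau'$ decreases to $0$ at $s_1$, so $\tau''(s_1)\le 0$. This is a contradiction.
\end{itemize}
The left side works the same way, and so does the case $\tau(s_0)<0$. The converse follows exactly as in Step 1.

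\textbf{Step 3 (monotone ends).} By Steps 1–2, each of $\alpha$ and $\tau$ has at most one critical point. Two critical points would both be strict global extrema. Two minima, or two maxima, would contradict strictness. A global minimum with positive value together with a global maximum with negative value is also impossible, since the minimum value cannot exceed the maximum value. Hence $\alpha$ and $\tau$ are monotone outside a bounded set, and we can choose $s_1,s_2$ beyond the (at most one) critical point of each.

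The main point needing care is Step 0, the nondegeneracy that rules out $\alpha(s_0)=0$ or $\tau(s_0)=0$; it is handled via Lemma \ref{lem2} and uniqueness of solutions.
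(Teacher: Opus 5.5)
Your proposal is correct and follows essentially the paper's route. You use the identities $\alpha''(s_0)=\alpha(s_0)$ and $\tau''(s_0)=\tau(s_0)\bigl(1+a^2\tau(s_0)^2\bigr)$ at critical points, rule out degenerate critical points by reducing to the trivial solutions $(0,0,\pm1)$, and derive a contradiction at the next critical point; this is the paper's ``two consecutive local minima'' argument, which you state more carefully as a first-return argument on both sides of $s_0$.

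The only thing you leave implicit is the case $\psi(0)\in H\cup C$ for the $\tau$-part and the monotone ends. That case is immediate: there $\tau^2=\alpha^2+\eta^2+1$ or $\tau^2=\alpha^2+\eta^2>0$, so $\tau$ never vanishes, $\alpha$ has no critical points, and your Step 2 applies verbatim.
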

\begin{proof} 
Initially, let $\psi(0) \in S$ be and $\in s_{0}$ be a critical point of $\alpha(s)$, then $\tau(s_0)=0$. It follows from \eqref{s2} that
	\begin{equation}\label{s_0}
		\alpha^{\prime \prime}(s_0)=\alpha(s_0).
	\end{equation}
	Note that, if $\alpha(s_0)=\tau(s_0)=0$, then $\eta(s_0)=\pm1$ and  $\psi(s)=(0,0,\pm1)$, that is, a trivial solution. Let $\alpha(s_0)>0$, thus $s_{0}$ is a local minimum point of $\alpha(s)$. If there is another critical point $s_1$ of $\alpha(s)$ such that $s_0$ and $s_1$ are consecutive, then $\alpha(s_1)>\alpha(s_0)>0$, because $s_0$ is  a local minimum point. Thus, $\alpha^{\prime \prime}(s_1)=\alpha(s_1)>0$ and $s_1$ is a local minimum point $\alpha(s)$, this is absurd. Therefore, if $\alpha(s_0)>0$, then $s_0$  is the global minimum point $\alpha(s)$.  The proof that $s_0$ is the global maximum point of $\alpha(s)$ if $\alpha(s_0)<0$ can be carried out in a manner analogous to the previous case.

Let $s_{0}$ be a critical point of $\tau(s)$, then $\alpha(s_0)+a\tau(s_0)\eta(s_0)=0$ and $\tau''(s_0) = \tau(s_0)[1 + a^2 \tau(s_0)^2]$. If $\tau(s_0)=0$, then $\alpha(s_0)=0$, $\eta(s_0)=\pm1$ and  $\psi(s)=(0,0,\pm1)$, that is, a trivial solution. Thus, $s_0$ local minimum (respectively maximum) point of $\tau(s)$ if and only if $\tau(s_0)>0$ (respectively $\tau(s_0)<0$). The proof that $s_0$ is the global minimum (respectively maximum) point of $\tau(s)$ can be carried out in a manner analogous to the case of $\alpha(s)$.

The functions $\alpha(s)$ and $\tau(s)$ have at most one critical point, always there exists $s_1$, $s_2$ such that $\alpha(s)$ and $\tau(s)$ is monotone on the intervals $(\omega_{-},s_1)$ and $(s_2, \omega_{+}).$

\end{proof}\
Notice that when $\psi(0) \in H \cup C$, the functions $\alpha(s)$ and $\eta(s)$ do not have critical points, and the function $\tau(s)$ does not have maximum point, because we are taking  $\tau(s)>0$ for all $s \in I$. By Lemma \ref{lem25}, any non-trivial solution of the system \eqref{s2} such that $\psi(0) \in W^{i}(P) \cup W^{e}(P)\cup W^{e}(0)\cup W^{i}(0)$ has no critical points for the functions $\alpha(s)$ and $\tau(s)$. For initial conditions in $H$, we present the following results.

\begin{lemma}
	\label{lem26}
	Let $\psi(s)=(\alpha(s),\tau(s),\eta(s))$ be a non-trivial solution of \eqref{s2} defined on the maximal interval $I=(\omega_{-},\omega_{+})$, $a>0$, $\epsilon=-1$  and initial condition $\psi(0)\in H$, where $H$ is given by  \eqref{h}. Then, there exists $s_0 \in I$ such that $s_0$ is a critical point of $\tau(s)$.
	\end{lemma}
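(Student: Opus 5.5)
The plan is to argue by contradiction, using only the structure of \eqref{s2} with $\epsilon=-1$ and the constraint defining $H$. For $\epsilon=-1$ the system reads $\alpha'=\tau$, $\tau'=\alpha+a\tau\eta$, $\eta'=a\tau^2$. The initial condition lies in $H$, so the solution satisfies
$$\alpha^2(s)-\tau^2(s)+\eta^2(s)=-1 \quad \text{for all } s\in I.$$
Hence $\tau^2=1+\alpha^2+\eta^2\geq 1$. In particular $\tau$ never vanishes, and by the convention fixed before Lemma \ref{lem2} we may assume $\tau(s)\geq 1>0$ on $I$.

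Two consequences follow at once. First, $\alpha'=\tau\geq 1$ and $\eta'=a\tau^2\geq a>0$, so $\alpha$ and $\eta$ are strictly increasing with at least linear growth. Second, a bound on $|\tau|$ over a subinterval bounds $|\alpha|$ and $|\eta|$ there, since $\alpha^2+\eta^2=\tau^2-1$.

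Suppose $\tau$ has no critical point. Then $\tau'$ never vanishes on the interval $I$, so $\tau'$ has constant sign and $\tau$ is strictly monotone. There are two cases.

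\emph{Case 1: $\tau$ is decreasing.} On $[0,\omega_+)$ we have $1\leq\tau(s)\leq\tau(0)$. Hence $\alpha^2+\eta^2\leq\tau(0)^2-1$ there, and the whole solution $\psi$ stays in a compact set. Since $I$ is maximal, the standard extension theorem gives $\omega_+=+\infty$. But $\alpha'\geq 1$ gives $\alpha(s)\geq\alpha(0)+s\to+\infty$ as $s\to+\infty$, contradicting the boundedness of $\alpha$.

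\emph{Case 2: $\tau$ is increasing.} The argument is symmetric on $(\omega_-,0]$. There $\tau\leq\tau(0)$, so $\psi$ is bounded and $\omega_-=-\infty$. Then $\alpha(s)\leq\alpha(0)+s\to-\infty$ as $s\to-\infty$, again a contradiction.

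Both cases are impossible, so $\tau$ has a critical point $s_0\in I$.

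The only delicate point is Case 1, where one must pass from boundedness of $\tau$ on a half-interval to global existence on that side. This uses that solutions of a smooth autonomous ODE confined to a compact set cannot leave the maximal interval in finite time. That in turn relies on the $H$-constraint bounding $\alpha$ and $\eta$ in terms of $\tau$, which is exactly where membership in $H$ (rather than $C$ or $S$) enters. As an optional addition, the analogue of the second-derivative computation in Lemma \ref{lem25}, $\tau''(s_0)=\tau(s_0)[1+a^2\tau(s_0)^2]>0$, shows that $s_0$ is a minimum of $\tau$.
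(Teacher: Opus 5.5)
Your proof is correct. It shares the paper's skeleton: contradiction, strict monotonicity of $\tau$, and boundedness of $\psi$ on a half-interval via the constraint $\alpha^2+\eta^2=\tau^2-1$. The contradiction at the end is obtained differently, though.

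The paper argues that $\alpha,\tau,\eta$ are monotone and bounded on $(\bar s,\omega_+)$. Hence $\psi(s)$ converges to some $P\in H$, which must be a singular point of \eqref{s2}. This contradicts Lemma \ref{tri}, which shows $H$ contains no singular points.

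You instead use the lower bound $\tau\geq 1$ that the $H$-constraint gives. Once compactness forces $\omega_+=+\infty$ (resp. $\omega_-=-\infty$), the equation $\alpha'=\tau\geq 1$ makes $\alpha$ grow linearly, contradicting its boundedness.

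Your route is more elementary and self-contained. It does not need the convergence of $\psi$, the fact that a trajectory's limit as $s\to\pm\infty$ is an equilibrium, or Lemma \ref{tri}. It also makes explicit the global-existence step ($\omega_+=+\infty$) that the paper's argument uses only implicitly. As a by-product, it shows directly that $\tau$ cannot be bounded near either end of $I$ when $\psi(0)\in H$.

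The paper's equilibrium-based argument is the same template it uses in Lemma \ref{un}. It relies only on the absence of singular points in the invariant set, not on a positive lower bound for $|\tau|$. That bound is special to $H$: on $C$ or $S$, $\tau$ can approach $0$, so your linear-growth argument would not transfer.
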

\begin{proof}
Suppose by contradiction that $\tau(s)$ has no critical points; then $\tau$ is either strictly increasing or strictly decreasing on $I$. Assume that $\tau$ is strictly decreasing, hence $\tau(s)$ is bounded on $(\Bar{s}, \omega_+)$, for every $\bar{s} \in I$. By the equation $\alpha(s)^2 + \eta(s)^2 = \tau(s)^2 +\delta$, $s \in I$, we have that $\alpha$ and $\eta$ are monotonic and bounded on $(\bar{s}, \omega_+)$, so there exists $P$ such that $\lim_{s \rightarrow \omega_+} \psi(s) = P$, that is, $P$ is a singular point in $H$, this a contradiction, because by Lemma \ref{tri},  there no exists singular point on $H$. Analogously, one proves the case where $\tau(s)$ is increasing. 
\end{proof}

\begin{lemma}
	\label{un}
	Let $\psi(s)=(\alpha(s),\tau(s),\eta(s))$ be a non-trivial solution of \eqref{s2} defined on the maximal interval $I$, $a>0$, $\epsilon=-1$ and initial condition $\psi(0)\in H\cup C\cup S$. Let $s_0 \in I$. Consider $W^{i}(P)$, $W^{e}(P)$, $W^{i}(0)$ and $W^{e}(0)$ given by \eqref{w} and \eqref{w1}. If $\psi(0) \in (H\cup C\cup S)\setminus (W^{i}(P)\cup W^{i}(0))$ (respectively $\psi(0) \in (H\cup C\cup S)\setminus (W^{e}(P)\cup W^{e}(0)$), then $\omega_{-}>-\infty$, $ \lim_{s \to \omega_{-}}|\tau(s)|=\lim_{s \to \omega_{-}}-\eta(s)=\lim_{s \to \omega_{-}}|\alpha(s)|=+\infty$(respectively $\omega_{+}<+\infty$ $\lim_{s \to \omega_{+}}|\tau(s)|=\lim_{s \to \omega_{+}}|\alpha(s)|=\lim_{s \to \omega_{+}}\eta(s)=+\infty$).   
\end{lemma}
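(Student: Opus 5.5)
We argue the case of $\omega_-$; the case of $\omega_+$ is symmetric, obtained by reversing the roles of the two ends and of $W^i$, $W^e$. The plan is to exclude every bounded behaviour of $\psi$ near $\omega_-$, so that all three functions blow up, and then to invoke Lemma \ref{p} to get $\omega_->-\infty$.

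\textbf{Monotonicity near the end.} With $\epsilon=-1$ the system reads $\alpha'=\tau$, $\tau'=\alpha+a\tau\eta$, $\eta'=a\tau^2\ge 0$. So $\eta$ is nondecreasing on all of $I$, and strictly increasing unless $\tau$ vanishes on an interval. By Lemma \ref{lem25}, and by the remark after it for $\psi(0)\in H\cup C$, there is $s_1$ such that $\alpha$ and $\tau$ are monotone on $(\omega_-,s_1)$. Consequently $\alpha(s)$, $\tau(s)$ and $\eta(s)$ each have a limit in $[-\infty,+\infty]$ as $s\to\omega_-$.

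\textbf{Excluding finite limits.} Suppose $\eta$ stays bounded near $\omega_-$. The invariant relation $\alpha^2-\tau^2+\eta^2=\delta$, with $\delta\in\{-1,0,1\}$, gives $\alpha^2=\tau^2+\delta-\eta^2$.

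First suppose $\tau$ is bounded too. Then $\alpha$ is bounded, so $\psi$ stays in a compact set and maximality forces $\omega_-=-\infty$. Since the limits exist, $\psi(s)\to L$ as $s\to-\infty$. Because $\alpha'=\tau$ and $\eta'=a\tau^2$ have limits as $s\to-\infty$ while $\alpha$ and $\eta$ converge, those limits of the derivatives must vanish. Hence $L=(\ell_\alpha,0,\ell_\eta)$. The relation $\tau'=\alpha+a\tau\eta$ then converges to $\ell_\alpha$, which forces $\ell_\alpha=0$, so $L$ is a singular point.
\begin{itemize}
\item By Lemma \ref{tri} there is no singular point in $H$.
\item In $S$ the only singular points are $\pm P$. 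The point $-P$ is excluded because it has $\eta=-1$ while $\eta\ge 1$ on $S$ near it with $\eta$ increasing; more precisely, $\eta$ decreases toward $L$ as $s\to-\infty$, so $\eta(s)\ge \ell_\eta$, and we use the sign conventions to land on $P$. Landing on $P$ means $\psi(0)\in W^i(P)$.
\item In $C$ the limit is the origin, so $\psi(0)\in W^i(0)$.
\end{itemize}
Each case contradicts the hypothesis $\psi(0)\notin W^i(P)\cup W^i(0)$.

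Next suppose $\tau$ is unbounded while $\eta$ stays bounded. Then $\alpha^2\sim\tau^2$, and $\eta'=a\tau^2$ is not integrable near $\omega_-$. Indeed, $|\alpha|\to\infty$ with $\alpha'=\tau$ forces $\int|\tau|=\infty$, hence $\int\tau^2=\infty$ (by Cauchy--Schwarz if $\omega_->-\infty$, and trivially otherwise). This contradicts the boundedness of $\eta$.

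Therefore $\eta\to-\infty$ as $s\to\omega_-$, the sign coming from $\eta$ being nondecreasing. If $\tau$ remained bounded, then $\eta'=a\tau^2$ would be bounded, so $\omega_-=-\infty$. Then $\alpha^2=\tau^2+\delta-\eta^2$ would eventually be negative, which is impossible. Hence $|\tau|\to\infty$.

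For $\alpha$: if $\alpha$ stayed bounded, $\tau'=\alpha+a\tau\eta$ would have $\tau\eta$ dominant, with sign making $|\tau|$ shrink as $s$ decreases. Concretely, take $\tau>0$ and $\eta\to-\infty$, so $\tau'<0$ near $\omega_-$, meaning $\tau$ increases as $s$ decreases. Combined with $\alpha^2-\tau^2+\eta^2=\delta$, this gives $\tau^2-\eta^2$ bounded, so $\tau\sim|\eta|$. Then $\alpha'=\tau\to\infty$, which makes $\alpha$ unbounded, a contradiction. Thus $|\alpha|\to\infty$ as well.

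\textbf{Conclusion.} With $|\alpha|,|\tau|,|\eta|\to\infty$ at $\omega_-$, Lemma \ref{p} yields $\omega_->-\infty$.

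The main obstacle is the bounded-limit case: carefully showing that the limit point is $P$ (and not $-P$) or the origin, so that membership in $W^i$ follows. This includes handling the non-closedness of $C$ at $0$.
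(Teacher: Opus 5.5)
\textbf{There is a genuine gap in your last step, the proof that $|\alpha|\to\infty$.} You assume $\alpha$ bounded, correctly get $\tau\sim|\eta|\to\infty$, and then say that $\alpha'=\tau\to\infty$ makes $\alpha$ unbounded. That inference needs $\omega_-=-\infty$. You have not established this: the only places you derived $\omega_-=-\infty$ were inside cases you had already excluded. It is in fact false here, since the lemma asserts $\omega_->-\infty$. On a finite interval a derivative tending to $+\infty$ can still be integrable (e.g.\ $\tau\approx (s-\omega_-)^{-1/2}$), so a bounded $\alpha$ is not contradicted. Without $|\alpha|\to\infty$ you cannot invoke Lemma~\ref{p}, and the conclusion $\omega_->-\infty$ is never reached.

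The missing idea, which is what the paper uses, is to use $\eta$ as the parameter. Since $\eta'=a\tau^2$ and $\tau\neq 0$ near $\omega_-$, we have $d\alpha/d\eta=1/(a\tau)$. Under the contradiction hypothesis $0<\tau\le 2|\eta|$ near $\omega_-$, so $\alpha(s_1)-\alpha(s)\ge \frac{1}{2a}\ln\frac{|\eta(s)|}{|\eta(s_1)|}\to\infty$ because $\eta\to-\infty$. Equivalently, $\eta'\le 4a\eta^2$ forces $|\eta(s)|\ge 1/(4a(s-\omega_-))$, which makes $\tau$ non-integrable at $\omega_-$.

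Your earlier steps are fine: monotonicity at the end, the two bounded-$\eta$ cases, $\eta\to-\infty$, and $|\tau|\to\infty$. They use the same ingredients as the paper in a different order. Your Cauchy--Schwarz step, which separates finite and infinite $\omega_-$, is more careful than the paper's.

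\textbf{Your exclusion of the limit $-P=(0,0,-1)$ in the bounded case is also invalid.}
\begin{itemize}
\item Near $-P$ on $S$ one has $\eta\approx -1$, not $\eta\ge 1$.
\item The inequality $\eta(s)\ge \ell_\eta$ excludes nothing.
\item The normalization $\tau>0$ is imposed only for $H\cup C$.
\end{itemize}
No argument can exclude this case. By Lemma~\ref{lem2}, $-P$ is a hyperbolic saddle on $S$ with eigenvalues $\frac{-a\pm\sqrt{a^2+4}}{2}$. Its unstable manifold therefore consists of non-trivial orbits with $\omega_-=-\infty$ and $\psi(s)\to -P$, and these lie outside $W^{i}(P)\cup W^{i}(0)$. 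The system does have the symmetry $(\alpha,\tau,\eta)(s)\mapsto(\alpha,-\tau,-\eta)(-s)$, which swaps $\pm P$, but it also swaps the two ends, so it does not reduce this case to $W^{i}(P)$.

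The paper's proof has the same blind spot: it simply declares any singular limit a contradiction. The correct repair is to read the hypothesis as excluding convergence to either of $\pm P$ at the relevant end, not to try to prove that $-P$ cannot occur.
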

\begin{proof}		
	Let $s_1,$ $s_2$ be given by Lemma \eqref{lem25}, then the functions $\alpha(s)$, $\tau(s)$ and $\eta(s)$ are monotonic on $(s_2,\omega_{+})$ and $(\omega_{-},s_1)$. If $(H\cup C\cup S)\setminus (W^{e}(P)\cup W^{e}(0))$ assume by contradiction that $\tau(s)$ is limited in $(s_2,\omega_{+})$. As $\alpha^{2}(s)+\eta^{2}(s)=\tau^{2}(s)+\delta$, so the functions $\alpha(s)$ and $\eta(s)$ are limited and monotones in $(s_2,\omega_{+})$. The  $ \lim_{s \to \omega_{+}} \tau(s)$ exists, because $\tau^{2}(s)=\eta^{2}(s)+\alpha^{2}(s)-\delta$. Hence, there is $Q \in \mathbb{R}_{1}^{3}$ such that $ \lim_{s \to \omega_{+}}(\alpha(s),\tau(s),\eta(s))=Q$, $\omega_{+}=+\infty$ and $\bar{\psi}(s)=Q$ is a singular solution of \eqref{s2}, this a contradiction, because $\psi(0)\notin W^{e}(P)\cup W^{e}(0)$. Therefore, $ \lim_{s \to \omega_{+}}|\tau(s)|=+\infty$.
	
	Now, Suppose by contradiction that $\eta(s)$ is bounded on $(s_2,\omega_{+})$, thus using the Cauchy-Schwarz inequality to $s \in (s_2,\omega_{+})$, we obtain
	
	$$|\alpha(s)-\alpha(s_2)|\leq\int_{s_2}^s|\tau(s)|\cdot 1 ds\leq\left(\int_{s_2}^{s}\tau^{2}ds\right)^\frac{1}{2}\cdot \left([s-s_2]\right)^\frac{1}{2}\leq\left(\frac{1}{a}[\eta(s)-\eta(s_2)]\right)^\frac{1}{2}[\omega_{+}-s_2]^{\frac{1}{2}}$$	
	and, $\alpha(s)$ also bounded on $(s_2,\omega_{+})$, this contradicts the fact that $\tau(s)$ is unbounded and $\tau^{2}(s)=\alpha^2(s)+\eta^{2}(s)-\delta$. Therefore, $\eta(s)$ is unbounded on $(s_2,\omega_{+})$. 
	
	We proof that $\alpha(s)$ is unbounded on $(s_2, \omega_+)$. Observe that, if $\alpha(s)$ is bounded, \newline $\lim_{s \to \omega_{+}}\alpha(s)/\eta(s)=0$, $\lim_{s\to \omega_{+}}\eta(s)=\lim_{s\to \omega_{+}}|\tau(s)|=+\infty$ and there exists $s_2$ with $\eta(s_2)>1$ such that $\alpha^{2}(s)<\eta^2(s)$, $\eta^{2}(s)-\delta< 3\eta^{2}(s)$ and $\tau(s)\neq 0$ for all $s>s_2$. Consequently,  $\tau^{2}(s)=\alpha^{2}(s)+\eta^{2}(s)-\delta<4\eta^2(s)$, that is, $|\tau(s)|<2\eta(s)$ for all $s>s_2$. Note that
	$$\frac{d\alpha}{d\eta}=\frac{\tau(s)}{a\tau^{2}}=\frac{1}{a \tau(s)}.$$
	If $\tau(s)>0$ for $s>s_2$, then 
	$$\frac{d\alpha}{d\eta}>\frac{1}{2a \eta(s)}, \,\, \text{and}\,\, \alpha(s)-\alpha(s_2)>\frac{1}{2a}\ln\left(\frac{\eta(s)}{\eta(s_2)}\right).$$
	If $\tau(s)<0$ for $s>s_2$, then 
	$$\frac{d\alpha}{d\eta}<-\frac{1}{2a \eta(s)}, \,\, \text{and}\,\, \alpha(s)-\alpha(s_2)<-\frac{1}{2a}\ln\left(\frac{\eta(s)}{\eta(s_2)}\right).$$
	This a contradiction, because $\lim_{s \rightarrow \omega_+} \ln\eta(s)=+\infty.$ Therefore, $\lim_{s \to \omega_{+}}|\alpha(s)|=+\infty$ and by Lemma \ref{p}, $\omega_{+}<+\infty$
		 
By an analogous argument, one obtains the results on $(\omega_{-},s_1)$. 
\end{proof}

\noindent \bf Proof of Theorem \ref{c9} \rm. Without loss generality, we can consider $v=ae$, where $a>0$ and

\begin{eqnarray*}
		e=	\left\{\begin{array}{l}
			(-1,0,0)\hspace{0.3 cm} \text{if} \hspace{0.3 cm}v \hspace{0.3 cm} \text{is a timelike vector},\\
			(-1,1,0)\hspace{0.3 cm} \text{se} \hspace{0.3 cm} v \hspace{0.3 cm}  \text{is a lightlike vector },\\
			(0,0,1)\hspace{0.3 cm} \text{se} \hspace{0.3 cm} v \hspace{0.3 cm}  \text{is a spacelike vector.}
		\end{array} \right.
	\end{eqnarray*}	 
	Let $\psi(s)=(\alpha(s),\tau(s),\eta(s))$ be a solution of \eqref{s2} defined on the maximal interval $I=(\omega_{-},\omega_{+})$, $a>0$ and initial condition $\psi(0) \in \mathbb{R}^{3}$ satisfy 		 
	\begin{eqnarray*}
		\alpha^{2}(0)-\tau^{2}(0)+\eta^{2}(0)=	\left\{\begin{array}{l}
			-1\hspace{0.3 cm} \text{if} \hspace{0.3 cm}v \hspace{0.3 cm} \text{is a timelike vector},\\
			0 \hspace{0.3 cm} \text{if} \hspace{0.3 cm} v \hspace{0.3 cm}  \text{is a lightlike vector},\\
			1 \hspace{0.3 cm} \text{if} \hspace{0.3 cm} v \hspace{0.3 cm}  \text{is a spacelike vector.}
		\end{array} \right.
	\end{eqnarray*}
	By Proposition \ref{c4p2}, there exists a solution solution to the CF $X(s)$, of curvature $k(s)=a\tau(s)$, whole the trihedron   
	$\displaystyle \{X(s),T(s),N(s)\}$ satisfy $$\alpha(s)=\langle X(s),e\rangle,\hspace{0.3 cm} \tau(s)=\langle T(s),e\rangle\hspace{0.3 cm}\text{and} \hspace{0.3 cm} \eta(s)=\langle N(s),e\rangle.$$
	
	Thus, the initial conditions of \eqref{s2} that depend on two constants determine the soliton solution in each case. Therefore, for each fixed vector $v \in \mathbb{R}_{1}^{3}\setminus \{0\}$, there is a 2-parameter family of non-trivial soliton solutions to the CF in $\mathbb{S}^{2}_{1}.$

    Let $\psi(s)$ be a non-trivial solution. It follows from Lemma \ref{un} that if $\psi(0) \in H\cup C \cup S \setminus \left(W^{i}(P)\cup W^{e}(P)\cup W^{i}(0) \cup W^{e}(0)\right)$, then $I=(\omega_{-}, \omega_+)$ is a bounded interval and at each end, $k(s)=a\tau(s)$ is unbounded. From the properties of the sets $H$ and $C$, it follows that $\tau(s)\neq0$ for all $s \in I$, that is, $k(s)$ has no zeros. By Lemma \ref{lem25}, we have that $\tau(s)$ does not admit a maximum point if $\tau(s)>0$, and it does not admit a minimum point if $\tau(s)<0$, that is, $k(s)$ has at most one zero, and thus, taking $\psi(0) \in S$ such that $\tau(0)=0,$ we obtain that the function has a unique zero, $k(s)$ is unbounded at both ends and $\psi(s)$ does not belong to $W^{i}(P)\cup W^{e}(P)\cup W^{i}(0) \cup W^{e}(0)$. If $\psi(0) \in \left(W^{i}(P)\cap W^{e}(P)\right)\cup \left(W^{i}(0) \cap W^{e}(0)\right)$, then it follows from Lemmas \ref{lem2} and \ref{e3} that $\psi(s)$ is defined for all $s \in \mathbb{R}$, $\lim_{s \to +\infty}\tau(s)=\lim_{s \to -\infty}\tau(s)=0$, thus the information regarding the critical points of $\tau(s)$ also enables us to conclude that     $\psi(s)$ is a trivial solution. 
    
    Hence, it follows from Lemmas \ref{lem2}, \ref{e3} and \ref{un} that if $\psi(s)$ is non-trivial solution and $\psi(0) \in W^{i}(P)\cup W^{i}(0)$ (respectively, $\psi(0) \in W^{e}(P)\cup W^{e}(0)$), then $\omega_{-}=-\infty$, $\lim_{s \to-\infty}k(s)=0$, $\omega_+<+\infty$, $k(s)\neq0$ for all $s$ and $\lim_{s \to\omega_{+}}|k(s)|=+\infty$ (respectively, $\omega_{-}>-\infty$, $\lim_{s \to-\omega_{-}}|k(s)|=+\infty$, $\omega_{+}=+\infty$, $k(s)\neq0$ for all $s$ and $\lim_{s \to+\infty}k(s)=0$). Therefore, there exist no non-trivial complete soliton solutions.
    
\subsection{Behavior of spacelike soliton solutions to the FC}
    
\hspace*{\parindent} Now, we study the system \eqref{s2} when $\epsilon=1$; in this case, the curve associated with the system is spacelike on the De Sitter space. Similarly to previous case, taking $\displaystyle X: I \rightarrow \mathbb{S}_{1}^{2}$ is a  spacelike curve parametrized by arc length $s$, such that $N(s)=(n_1(s),n_2(s), n_3(s))$ is the unit normal vector field satisfying $-n_1^2+n_2^2+n_3^2=-1$, i.e, $n_1(s)\neq 0$ for all $s\in I$. The function $\eta(s)$ defined by \eqref{c4e5} has the following properties. If $e=(-1,0,0)$, timelike vector, then $\eta(s)=n_1(s)>1$ or $\eta(s)=t_1(s)<-1$  for all $s \in I$. If $e=(-1,1,0)$, lightlike vector, then $\eta(s)=n_1(s)+n_2(s)>0$ or $\eta(s)=n_1(s)+n_2(s)<0$ for all $s \in I$. Thus, when $X(s)$ is a spacelike soliton solution with $\eta(s)=\langle N(s), (-1,0,0)\rangle>1$ (respectively $\eta(s)=\langle N(s), (-1,1,0)\rangle>0$, then $X(-s)$ spacelike soliton solution with $\eta(s)=-\langle N(-s), (-1,0,0)\rangle<-1$ (respectively $\eta(s)=-\langle N(-s), (-1,1,0)\rangle<0.$ In this context, when $\psi(0) \in H\cup C$, we study only the case $\eta(s)>0$.

The next Lemma studies the singular points of \eqref{s2} when $\epsilon=1$.

\begin{lemma}\label{lem3}
	Let $\Phi:H\rightarrow TH\subset \mathbb{R}^3$ be the differential vector field given by
	$$\Phi(\alpha,\tau,\eta)=(\tau,-\alpha-a\tau\eta,-a\tau^2)$$
	where $a > 0$. Then $P = (0, 0, 1)$ and $-P = (0, 0,-1)$ are the singular points of $\Phi$ and the eigenvalues of $d\Phi_P$ and $d\Phi_{-P}$ are given respectively by
	$$\lambda_P=\frac{-a\pm\sqrt{a^2-4}}{2},\;\;\lambda_{-P}=\frac{a\pm\sqrt{a^2-4}}{2}.$$
\end{lemma}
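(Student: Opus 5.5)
The plan is to mirror the proof of Lemma \ref{lem2}, now with $\epsilon=1$, so that the relevant invariant surface is $H=\{\alpha^2+\tau^2-\eta^2=-1\}$. There are three steps: locate the zeros of $\Phi$ on $H$, identify the tangent plane of $H$ at those zeros, and compute the eigenvalues of the linearization $d\Phi$ restricted to that plane.

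\emph{Singular points.} I would set $\Phi(\alpha,\tau,\eta)=(\tau,-\alpha-a\tau\eta,-a\tau^2)=0$. The first component gives $\tau=0$, and then the second gives $\alpha=0$. Substituting into the defining equation of $H$ leaves $-\eta^2=-1$, so $\eta=\pm1$. Hence the only zeros on $H$ are $P=(0,0,1)$ and $-P=(0,0,-1)$. This agrees with Lemma \ref{tri}(i), which for a spacelike curve places $(0,0,\pm1)$ in $H$.

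\emph{Tangent plane.} The gradient of $F=\alpha^2+\tau^2-\eta^2$ is $(2\alpha,2\tau,-2\eta)$, which equals $(0,0,\mp2)$ at $\pm P$. Therefore $T_{\pm P}H=\{(w_1,w_2,w_3):w_3=0\}$, exactly as in Lemma \ref{lem2}.

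\emph{Linearization.} The Jacobian of $\Phi$ is
\begin{equation*}
d\Phi=\left(\begin{array}{ccc} 0 & 1 & 0\\ -1 & -a\eta & -a\tau\\ 0 & -2a\tau & 0\end{array}\right).
\end{equation*}
At $\pm P$ we have $\tau=0$ and $\eta=\pm1$, so the third column and third row vanish and $d\Phi_{\pm P}$ maps $T_{\pm P}H$ into itself. On the tangent plane it acts by the $2\times2$ block $\left(\begin{array}{cc}0&1\\-1&\mp a\end{array}\right)$. Writing $d\Phi_{\pm P}(w)=\lambda w$ with $w=(w_1,w_2,0)\neq0$ gives $w_2=\lambda w_1$ and $-w_1\mp a w_2=\lambda w_2$. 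Eliminating $w_2$ yields $\lambda^2\pm a\lambda+1=0$. Solving this quadratic gives $\lambda_P=\frac{-a\pm\sqrt{a^2-4}}{2}$ for the upper sign and $\lambda_{-P}=\frac{a\pm\sqrt{a^2-4}}{2}$ for the lower sign, as claimed.

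None of these steps is a real obstacle, since each is a routine computation. The only care needed is to check that $d\Phi_{\pm P}$ preserves the tangent plane, which it does because $\tau=0$ kills the coupling entries, and to keep the sign conventions straight when passing from $\epsilon=-1$ in Lemma \ref{lem2} to $\epsilon=1$ here. I would also remark afterward that the nature of $\pm P$ now depends on $a$: for $a\ge2$ both eigenvalues are real and of the same sign, so the points are nodes, while for $0<a<2$ they are complex, so the points are foci. In every case $P$ is attracting and $-P$ is repelling, which is the information needed later for the analysis of spacelike solitons.
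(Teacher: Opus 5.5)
Your proposal is correct and takes the same route as the paper, whose proof of this lemma simply says it is analogous to Lemma \ref{lem2}. You carry out that computation explicitly with $\epsilon=1$: the zeros $\pm P$ on $H$, the tangent plane $\{w_3=0\}$, and the $2\times2$ block yielding $\lambda^2\pm a\lambda+1=0$.
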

\begin{proof}
	The proof is analogous to Lemma \ref{lem2}.
\end{proof}

The Lemma \ref{lem3}, shows that the singular solution $\psi(s)=(0,0,1)$ of system \eqref{s2}, $\epsilon=1$ given in the Lemma \ref{tri} and a local attractor, and attracts exponentially as $a\geq 2$ and spirals inward as $0<a<2$. The other case presents a local repeller and belongs to another connected component of $H$. The next lemma shows that the singular solutions of \eqref{s2}, $\psi(s)=(0,0,1)$ it is a global attractor, just like, $\psi(s)=(0,0,0)$ is a global attractor for the solutions of \eqref{s2}, $\epsilon=1$ in $C$.
\begin{lemma}
	\label{14}
	Let $\psi(s)=(\alpha(s),\tau(s),\eta(s))$ be a solution non-trivial of \eqref{s2} defined on the maximal interval $I=(\omega_{-},\omega_{+})$, $a>0$, $\epsilon=1$  and initial condition $\psi(0)\in C \cup H$, where $C$ and $H$ are given by \eqref{h}. Then $\omega_+=+\infty$ and
	$\lim_{s\rightarrow +\infty}\alpha(s)=\lim_{s\rightarrow +\infty}\tau(s)=0,\;\; \lim_{s\rightarrow +\infty}\eta(s)=-\delta$
	where $\delta=0,-1$.	
\end{lemma}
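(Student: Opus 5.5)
The plan is a Lyapunov-type argument that uses $\eta$ itself as the monotone quantity. Throughout, $\epsilon=1$ and, as agreed in the text preceding Lemma \ref{lem3}, I take $\eta>0$ when $\psi(0)\in H\cup C$. The constraint \eqref{h} then reads $\alpha^2+\tau^2=\eta^2+\delta$, with $\delta=-1$ on $H$ and $\delta=0$ on $C$.

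\textbf{Step 1: bounds and $\omega_+=+\infty$.} By \eqref{s2}, $\eta'=-a\tau^2\le 0$, so $\eta$ is nonincreasing.
\begin{itemize}
\item On $H$ we have $\eta^2=\alpha^2+\tau^2+1\ge 1$, hence $\eta\ge 1$.
\item On $C$ we have $\eta\ge 0$.
\end{itemize}
In both cases $\eta$ is bounded on $[0,\omega_+)$, since $0\le\eta(s)\le \eta(0)$. The constraint then bounds $\alpha^2+\tau^2$ by $\eta(0)^2$. A solution of a smooth ODE that stays in a compact set cannot blow up, so maximality of $I$ gives $\omega_+=+\infty$. Moreover, $\eta$ is monotone and bounded, so it has a limit $\eta_\infty:=\lim_{s\to+\infty}\eta(s)\ge 0$.

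\textbf{Step 2: $\tau\to 0$.} Integrating $\eta'=-a\tau^2$ gives
\[
a\int_0^{\infty}\tau^2\,ds=\eta(0)-\eta_\infty<\infty .
\]
Since $\alpha,\tau,\eta$ are bounded, $\tau'=-\alpha-a\tau\eta$ is bounded. Hence $(\tau^2)'=2\tau\tau'$ is bounded and $\tau^2$ is uniformly continuous. Barbalat's Lemma, already used in Lemma \ref{e3}, then yields $\lim_{s\to+\infty}\tau(s)=0$.

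\textbf{Step 3: $\alpha\to 0$ and identification of $\eta_\infty$.} From the constraint, $\alpha^2=\eta^2+\delta-\tau^2\to \eta_\infty^2+\delta=:L^2$. Suppose $L>0$. By continuity, $\alpha$ eventually has a fixed sign, so $\alpha\to\pm L$. Then $\tau'=-\alpha-a\tau\eta\to \mp L\neq 0$, because $\tau\to0$ and $\eta$ is bounded. This forces $|\tau(s)|\to\infty$, contradicting Step 2. Hence $\alpha\to0$, and so $\eta_\infty^2=-\delta$. Since $\eta_\infty\ge 0$, we get $\eta_\infty=\sqrt{-\delta}$: the limit is $1$ on $H$ (the attractor $P=(0,0,1)$ of Lemma \ref{lem3}) and $0$ on $C$. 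This is the stated limit $-\delta$ for $\delta\in\{0,-1\}$.

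\textbf{Expected obstacle.} The main care point is the sign normalization. On the other component of $H$, where $\eta\le -1$, the argument must be run as $s\to-\infty$ or after the reflection $s\mapsto -s$ described before Lemma \ref{lem3}. One should also note that the trivial solutions of Lemma \ref{tri} are excluded by the non-triviality hypothesis.
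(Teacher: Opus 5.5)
Your proof is correct and follows the paper's argument: $\eta$ is monotone and bounded, which gives $\omega_+=+\infty$ and a limit for $\eta$; Barbalat's lemma gives $\tau\to 0$; and the constraint $\alpha^2+\tau^2=\eta^2+\delta$ then fixes the remaining limits. The only differences are minor: the paper gets $\alpha\to 0$ from a second application of Barbalat's lemma to $\tau$, whereas you use the elementary contradiction $\tau'\to\mp L$, and you explicitly pass through $\eta_\infty^2=-\delta$ and the sign of $\eta$ to conclude $\eta_\infty=-\delta$, a step the paper states directly.
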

\begin{proof}
	First, observe that $\eta(s)>0$ is monotonic and bounded on $(\bar{s}, \omega_+)$ for all $\bar{s} \in I$. Therefore, from the equation $\alpha(s)^2 + \tau(s)^2 = \eta(s)^2 + \delta$, $s \in I$, we conclude that $\alpha(s)$ and $\tau(s)$ are also bounded on $(\bar{s}, \omega_+)$. Since $I$ is a maximal interval, it follows that $\omega_+ = \infty$ and $\lim_{s \rightarrow \infty} \eta(s)$ exists. Moreover, $\eta''(s)=-2a\tau(s)\tau^{\prime}(s)=2a\alpha(s)\tau(s)+2a^2\tau^{2}(s)\eta(s)$ is bounded on $(\bar{s}, \omega_+)$, and by Barbalat's lemma, we have $\lim_{s \rightarrow \infty} \eta'(s) = 0$. Consequently, $\lim_{s \rightarrow \infty} \tau(s) = 0$. Applying Barbalat's lemma for $\tau(s)$, we given that $\lim_{s \rightarrow \infty} \tau'(s) = 0$, and hence $\lim_{s \rightarrow \infty} \alpha(s) =\lim_{s \rightarrow \infty} -\tau^{\prime}(s)-a\tau(s)\eta(s)= 0$. Finally, from the equation $\alpha(s)^2 + \tau(s)^2 = \eta(s)^2 + \delta$, $s \in I$, we conclude that
	$
	\lim_{s \rightarrow \infty} \eta(s) = -\delta.$		
\end{proof}
The next lemma provides information on the critical points of $\alpha(s)$ and $\eta(s)$.
\begin{lemma}\label{l12}
	Let $\psi(s)=(\alpha(s),\tau(s),\eta(s))$ be a solution non-trivial  of \eqref{s2} defined on the maximal interval $I$, $a>0$, $\epsilon=1$ and initial condition $\psi(0)\in H\cup C\cup S$, where $H$, $C$ and $S$ are given by \eqref{h}. Then, the function $\eta(s)$ is strictly decreasing on $I$. Moreover, if $s_0 \in I$ is a critical point of $\alpha$, then $s_0$ is a local minimum point (respectively, a local maximum point) if and only if $\alpha(s_0) < 0$ (respectively, $\alpha(s_0) > 0$).
\end{lemma}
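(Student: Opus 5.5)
Both claims follow directly from the system \eqref{s2} with $\epsilon=1$, namely
$$\alpha'=\tau,\qquad \tau'=-\alpha-a\tau\eta,\qquad \eta'=-a\tau^2 .$$

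For monotonicity of $\eta$, note first that $\eta'(s)=-a\tau^2(s)\le 0$ on $I$, so $\eta$ is non-increasing. It remains to rule out intervals where $\eta$ is constant and to show that zeros of $\eta'$ are isolated. Let $s_0\in I$ satisfy $\eta'(s_0)=0$, so $\tau(s_0)=0$.
\begin{itemize}
\item If also $\alpha(s_0)=0$, then $\psi(s_0)=(0,0,\eta(s_0))$ is a singular point of the system (every point $(0,0,c)$ is one). By uniqueness of solutions, $\psi$ is constant, hence trivial, which is excluded.
\item Otherwise $\tau'(s_0)=-\alpha(s_0)\neq 0$, so $\tau$ changes sign transversally at $s_0$.
\end{itemize}
Hence the zeros of $\tau$ are isolated, and $\eta'<0$ off a discrete set. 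Therefore $\eta$ is strictly decreasing on $I$, since a function with nonpositive derivative vanishing only on a discrete set is strictly monotone.

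For the second claim, let $s_0$ be a critical point of $\alpha$, so $\tau(s_0)=\alpha'(s_0)=0$. Then
$$\alpha''(s_0)=\tau'(s_0)=-\alpha(s_0)-a\tau(s_0)\eta(s_0)=-\alpha(s_0).$$
As in the first part, $\alpha(s_0)=0$ would force the trivial solution, so $\alpha(s_0)\neq0$ and the second derivative test applies directly:
\begin{itemize}
\item if $\alpha(s_0)<0$ then $\alpha''(s_0)>0$, so $s_0$ is a strict local minimum;
\item if $\alpha(s_0)>0$ then $\alpha''(s_0)<0$, so $s_0$ is a strict local maximum.
\end{itemize}
For the "only if" directions: if $s_0$ is a local minimum then $\alpha''(s_0)\ge0$, so $-\alpha(s_0)\ge 0$. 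Combined with $\alpha(s_0)\neq0$ this gives $\alpha(s_0)<0$. The local maximum case is symmetric.

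The only delicate point is the non-triviality step in both parts. Here "non-trivial" has to be read as "not a constant solution", and the argument relies on $(0,0,c)$ being a singular point of \eqref{s2} for every $c$, which is what lets uniqueness of solutions apply. The solutions of Lemma \ref{tri}(ii) have $\tau\equiv\pm1$, so they satisfy both claims without any special treatment.
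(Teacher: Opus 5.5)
Your proof is correct and follows essentially the paper's argument: $\alpha''(s_0)=-\alpha(s_0)$ at a critical point, and $\alpha(s_0)=0$ is ruled out because it forces the constant solution $(0,0,\pm1)$. The paper reaches this via the constraint $\alpha^2+\tau^2-\eta^2=\delta$, while you use uniqueness at the equilibrium $(0,0,\eta(s_0))$. Your isolated-zeros argument for the strict decrease of $\eta$ fills in a step the paper only asserts as following ``directly from the system''.
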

\begin{proof}
	From system \eqref{s2}, it follows directly that the function $\eta(s)$ is strictly decreasing on $I$. Let $s_0$ be a critical point of $\alpha(s)$. Suppose $\alpha'(s_0) = \tau(s_0) = 0$ and $\alpha''(s_0) = \tau'(s_0) = -\alpha(s_0) = 0$, so $s_0$ is also a critical point of $\tau$. It follows from $\alpha^2+\tau^2-\eta^{2}=\delta$, $\delta\in \{-1,0,1\}$, that $\psi(s) \in H$ for each $s$ and $\eta(s_0) = \pm 1$, which would imply $\psi(s) = (0,0,\pm 1)$ for all $s \in I$, contradicting the assumption that $\psi$ is a non-trivial solution. Therefore, $\alpha''(s_0)=- \alpha(s_0)\neq 0$ and $s_0$ is a local minimum point (respectively, a local maximum point) if and only if $\alpha(s_0) < 0$ ($\alpha(s_0) > 0$).	
\end{proof}

The study of the critical points of $\tau(s)$ depends on the sets $H$, $C$, and $S$.
\begin{lemma}\label{ll}
	Let $\psi(s)=(\alpha(s),\tau(s),\eta(s))$ be a solution non-trivial  of \eqref{s2} defined on the maximal interval $I$, $a>0$, $\epsilon=1$ and initial condition $\psi(0)\in H\cup C\cup S$, where $H$, $C$ and $S$ is given by \eqref{h}. If $s_0 \in I$ is a critical point of $\tau$ and $\psi(s_0)\in  H\cup C$, then $0<a|\tau(s_0)| < 1$, and $s_0$ is a local minimum point (respectively local maximum point) if and only if $\tau(s_0) < 0$ (respectively $\tau(s_0) > 0$).	If $s_0 \in I$ is a critical point of $\tau$ and  $\psi(s_0)\in  S$, then $s_0$ is a local minimum (respectively local maximum) if and only if $a\tau(s_0) > 1$ or $-1 < a\tau(s_0) < 0$ (respectively $a\tau(s_0) < -1$ or $0 < a\tau(s_0) < 1$).
	
\end{lemma}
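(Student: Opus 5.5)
The plan is to compute $\tau''$ at a critical point of $\tau$ directly from the system \eqref{s2} with $\epsilon=1$, and then use the conserved quantity \eqref{e_i2} to pin down the possible values of $a\tau(s_0)$ in each of the sets $H$, $C$, $S$. At a critical point $s_0$ of $\tau$, the second equation of \eqref{s2} gives $\alpha(s_0)=-a\tau(s_0)\eta(s_0)$. Differentiating $\tau'=-\alpha-a\tau\eta$ once more gives $\tau''=-\tau-a\tau'\eta-a\tau\eta'$. At $s_0$, using $\tau'(s_0)=0$ and $\eta'=-a\tau^2$, this becomes
\begin{equation*}
\tau''(s_0)=\tau(s_0)\bigl(a^2\tau^2(s_0)-1\bigr).
\end{equation*}
So everything reduces to knowing the sign of $\tau(s_0)$ and whether $a|\tau(s_0)|$ is smaller or larger than $1$.

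Next I will substitute $\alpha(s_0)=-a\tau(s_0)\eta(s_0)$ into $\alpha^2+\tau^2-\eta^2=\delta$, where $\delta\in\{-1,0,1\}$. This gives the key identity
\begin{equation*}
\eta^2(s_0)\bigl(a^2\tau^2(s_0)-1\bigr)=\delta-\tau^2(s_0).
\end{equation*}
First I rule out $\tau(s_0)=0$.
\begin{itemize}
\item If $\tau(s_0)=0$, then $\alpha(s_0)=0$ as well.
\item In $H$ this forces $\eta(s_0)=\pm1$, and by uniqueness of solutions of \eqref{s2} we get the trivial solution $(0,0,\pm1)$ of Lemma \ref{tri}, which is excluded.
\item In $C$ it forces $\eta(s_0)=0$, so $\psi(s_0)$ is the origin, which is not in $C$.
\item In $S$ it gives $-\eta^2(s_0)=1$, which is impossible.
\end{itemize}
Hence $\tau(s_0)\neq0$ in all three cases.

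For $\psi(s_0)\in H\cup C$ the right-hand side of the identity is $-1-\tau^2<0$ or $-\tau^2<0$. Therefore $a^2\tau^2(s_0)-1<0$, i.e. $0<a|\tau(s_0)|<1$. Then $\tau''(s_0)$ has the sign opposite to that of $\tau(s_0)$, so $s_0$ is a strict local minimum if and only if $\tau(s_0)<0$, and a strict local maximum if and only if $\tau(s_0)>0$.

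For $\psi(s_0)\in S$ no such restriction on $a|\tau(s_0)|$ comes from the identity. Instead I read the sign of $\tau(s_0)(a^2\tau^2(s_0)-1)$ directly:
\begin{itemize}
\item it is positive exactly when $a\tau(s_0)>1$ or $-1<a\tau(s_0)<0$, which gives a local minimum;
\item it is negative exactly when $a\tau(s_0)<-1$ or $0<a\tau(s_0)<1$, which gives a local maximum.
\end{itemize}

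The main obstacle is the degenerate boundary case $a|\tau(s_0)|=1$ in $S$, where $\tau''(s_0)=0$ and the second-derivative test fails. Here the identity gives $\tau^2(s_0)=1$, so $a=1$, $\tau(s_0)=\pm1$ and $\alpha(s_0)=\mp\eta(s_0)$. These are exactly the initial data of the explicit solution of Lemma \ref{tri}(ii), which is a genuine solution of \eqref{s2} with $\tau\equiv\pm1$ constant. By uniqueness for the ODE, $\psi$ coincides with that solution, so $\tau$ is constant and $s_0$ is not an isolated extremum. I will state that this constant-$\tau$ solution is excluded from the dichotomy (or treated separately as in Lemma \ref{tri}). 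With that case removed, the strict inequalities in the statement cover every remaining possibility, and the sign analysis above completes the proof.
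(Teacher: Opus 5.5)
Your proposal is correct and follows essentially the same route as the paper: compute $\tau''(s_0)=\tau(s_0)\bigl(a^2\tau^2(s_0)-1\bigr)$, substitute $\alpha(s_0)=-a\tau(s_0)\eta(s_0)$ into the conserved quantity to get $\eta^2(s_0)\bigl(a^2\tau^2(s_0)-1\bigr)=\delta-\tau^2(s_0)$, rule out $\tau(s_0)=0$ and $a|\tau(s_0)|=1$, and read off the signs. The paper disposes of the degenerate case $a|\tau(s_0)|=1$ exactly as you do, by identifying it with the constant-$\tau$ solution of Lemma \ref{tri}(ii), which it counts as trivial and hence excluded by the hypothesis; your separate treatment of $\tau(s_0)=0$ in each of $H$, $C$, $S$ is slightly more careful than the paper's.
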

\begin{proof} 
	Let $s_0 \in I$ be a critical point of $\tau$, that is, $\tau'(s_0) = -\alpha(s_0) - a \tau(s_0)\eta(s_0) = 0$ and 
	
	\begin{equation}\label{ii}
		\tau^{\prime \prime}(s_0) = \tau(s_0)[a^2 \tau(s_0)^2 - 1].	 	
	\end{equation}
	Substituting $\alpha(s_0) = -a \tau(s_0)\eta(s_0)$ into equation \eqref{h}, we obtain
	\begin{equation}\label{i}
		[a^2 \tau(s_0)^2 - 1] \eta(s_0)^2 = \delta - \tau(s_0)^2,
	\end{equation}
	where $\delta \in \{ -1, 0, 1\}$.
	We claim that $\tau(s_0)\neq0$ and $\tau^{\prime \prime}(s_0)\neq 0$. Suppose by contradiction that $\tau^{\prime \prime}(s_0)=0$, that is, $\tau(s_0)=0$ or $a^2 \tau(s_0)^2=1$. If $\tau(s_0)=0$, thus, $\tau(s_0)=\alpha(s_0)=0$, $\eta^{2}(s_0)=-\delta$ and $\psi(0)\in H$, that is, $\psi(s)$ is a trivial solution, contradicting the hypothesis. The another case implies that $\delta=1=\tau^{2}(s_0)=a$ and $\alpha(s_0)=\pm\eta(s_0)$, but, this trivial solution given by in Lemma \ref{tri}. 
	
	If $\delta  \in \{-1,0\}$, it follows from equation \eqref{i} that $a^2 \tau(s_0)^2 - 1 < 0$. Finally, from equation \eqref{ii}, we obtain the results.
\end{proof}

\begin{lemma} \label{p1}
	Let $\psi(s)=(\alpha(s),\tau(s),\eta(s))$ be a solution non-trivial  of \eqref{s2} defined on the maximal interval $I$, $a>0$, $\epsilon=1$ and initial condition $\psi(0)\in H\cup C\cup S$, where $H$, $C$ and $S$ are given by \eqref{h}. Let $s_0 \in I$. If $\tau(s)$ is bounded on $(\omega_{-},s_0)$, then $\omega_{-}=-\infty$, $\lim_{s \to -\infty}a\tau(s)=\pm1$ and $ \lim_{s \to -\infty}\eta(s)=\lim_{s \to -\infty}|\alpha(s)|=+\infty.$ If $\tau(s)$ is unbounded on $(\omega_{-},s_0)$, then $\omega_{-}>-\infty$ and $ \lim_{s \to \omega_{-}}|\tau(s)|=\lim_{s \to \omega_{-}}\eta(s)=\lim_{s \to \omega_{-}}|\alpha(s)|=+\infty.$ 
\end{lemma}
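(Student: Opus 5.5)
The plan is to work on the backward end $(\omega_{-},s_0)$ using only three ingredients: the system \eqref{s2} with $\epsilon=1$, the conserved quantity $\alpha^2+\tau^2-\eta^2=\delta$ with $\delta\in\{-1,0,1\}$, and the monotonicity of $\eta$ from Lemma \ref{l12}. Since $\eta'=-a\tau^2\le 0$, the function $\eta$ increases as $s\downarrow\omega_{-}$. Hence $L:=\lim_{s\to\omega_-}\eta(s)\in(-\infty,+\infty]$ always exists. Throughout I keep the normalization $\eta>0$ on $H\cup C$ adopted before Lemma \ref{lem3}.

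\emph{Bounded case.} Suppose $\tau$ is bounded on $(\omega_-,s_0)$.
\begin{itemize}
\item[(a)] If $\omega_->-\infty$, then $\alpha'=\tau$ and $\eta'=-a\tau^2$ are bounded on a finite interval, so $\alpha$ and $\eta$ are bounded and $\psi$ stays in a compact set. This contradicts maximality, so $\omega_-=-\infty$.
\item[(b)] Next I show $L=+\infty$. Suppose instead that $L<\infty$. Then $\int_{-\infty}^{s_0}\tau^2\,ds<\infty$, and $\alpha$ is bounded by the conserved quantity. I then repeat the Barbalat argument of Lemma \ref{14} backwards in time: $\eta''$ is bounded, so $\tau\to 0$; then $\tau'$ is bounded with $\tau''$ bounded, so $\tau'\to0$ and hence $\alpha\to0$. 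Thus $\psi(s)$ tends to a singular point as $s\to-\infty$.
\item[(c)] This limit is impossible. On $S$ there are no singular points, since $\alpha=\tau=0$ would force $-\eta^2=1$. On $C$ the only candidate is $(0,0,0)$; but $\eta>0$ is decreasing, so its backward limit is at least $\eta(s_0)>0$. On $H$, the normalization $\eta>0$ forces the limit to be $P=(0,0,1)$. By Lemma \ref{lem3}, $P$ has both eigenvalues with negative real part, so no non-trivial orbit converges to it as $s\to-\infty$.
\item[(d)] Hence $\eta\to+\infty$. Since $\alpha^2=\eta^2+\delta-\tau^2$ with $\tau$ bounded, also $|\alpha|\to+\infty$ and $\alpha^2/\eta^2\to1$.
\end{itemize}

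\emph{Limit of $a\tau$.} This is the step I expect to be the main obstacle. L'Hôpital gives formally
$$\lim\frac{\alpha}{\eta}=\lim\frac{\alpha'}{\eta'}=\lim\frac{-1}{a\tau},$$
so $a\tau\to\mp1$ follows provided $\lim\tau$ exists.
\begin{itemize}
\item[(e)] To get the limit of $\tau$, I first try to show that $\tau$ is eventually monotone near $-\infty$. By Lemma \ref{ll}, consecutive critical points of $\tau$ alternate between maxima and minima in prescribed ranges of $a\tau$. Combined with $\alpha$ having eventually constant sign (Lemma \ref{l12}, since $|\alpha|\to\infty$), I expect this to rule out infinitely many oscillations.
\item[(f)] If that fails, I would instead use the Cauchy (Stolz-type) form of L'Hôpital on the ratio $\alpha/\eta$, whose limit $\pm1$ is already known. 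Concretely, write $\alpha=\pm\eta+o(\eta)$ and feed this into $\tau'=-\alpha-a\tau\eta$. This gives $\tau'=-\eta(a\tau\pm1)+o(\eta)$ with $\eta\to+\infty$. This is a strongly stable backward equation: as $s$ decreases, $a\tau\pm1$ is driven to $0$, which forces $a\tau\to\mp1$.
\end{itemize}

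\emph{Unbounded case.} Suppose $\tau$ is unbounded on $(\omega_-,s_0)$.
\begin{itemize}
\item[(g)] Using the critical-point information of Lemma \ref{ll}, $\tau$ is eventually monotone near $\omega_-$, so $|\tau|\to+\infty$.
\item[(h)] Then $\eta\to+\infty$. If $\eta$ were bounded, the Cauchy--Schwarz estimate of Lemma \ref{un} would bound $\alpha$ when $\omega_-$ is finite. If $\omega_-=-\infty$ instead, $\eta$ bounded gives $\int\tau^2<\infty$, and Barbalat gives $\tau\to0$. Either outcome contradicts $|\tau|\to\infty$ through the conserved quantity.
\item[(i)] Next, $|\alpha|\to+\infty$, by the logarithmic estimate of Lemma \ref{un} with the sign conventions for $\epsilon=1$. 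Here $d\alpha/d\eta=-1/(a\tau)$ and $|\tau|\le 2\eta$ eventually.
\item[(j)] Finally, Lemma \ref{p} yields $\omega_->-\infty$.
\end{itemize}
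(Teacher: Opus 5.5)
\textbf{Gap: the limit $a\tau\to\pm1$ in the bounded case is never established.}

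Apart from this one step, your outline follows the paper's proof. That means Barbalat's lemma to exclude a finite backward limit of $\eta$, the conserved quantity to get $|\alpha|\to\infty$, the logarithmic estimate via $d\alpha/d\eta=-1/(a\tau)$, and Lemma \ref{p}. In places you are more careful than the paper; for instance, your step (g) justifies $|\tau|\to\infty$, which the paper simply asserts.

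Route (e) is only stated as an expectation. Route (f) rests on a false claim. Write $\alpha=\pm\eta+\rho$ with $\rho=o(\eta)$ and $w=a\tau\pm1$. Then $w'=-a\eta\,(w-\varepsilon)$ with $\varepsilon=-\rho/\eta\to0$. Since $\eta\to+\infty$, this equation is strongly contracting in \emph{forward} time, and therefore strongly expanding as $s$ decreases. Solutions of the homogeneous equation behave like $\exp\bigl(\int_s^{s_1}a\eta\bigr)$ and blow up as $s\to-\infty$. So nothing drives $w$ to $0$ backward; the stability is the other way round.

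The paper has the same hole. It writes $\lim(-\eta/\alpha)=\lim a\tau^2/\tau$, which is L'H\^opital's rule in the invalid direction: it presupposes that $\lim\tau$ exists. You located the weak point correctly, but you have not closed it either.

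\textbf{Repairing (f).} What actually does the work is the hypothesis that $\tau$ is bounded, which selects the unique non-expanding solution. Set $A'=a\eta$; then $A(s)\to-\infty$ as $s\to-\infty$ because $\eta$ is eventually positive. Since $w$ is bounded, the boundary term vanishes and variation of constants gives
\[
w(s)=\int_{-\infty}^{s}a\eta(r)\,\varepsilon(r)\,e^{A(r)-A(s)}\,dr .
\]
This is a weighted average of $\varepsilon$ with positive weights of total mass $1$. Hence $|w(s)|\le\sup_{r<s}|\varepsilon(r)|\to0$.

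\textbf{Completing (e).}
\begin{itemize}
\item Because $\alpha$ eventually has one sign, Lemma \ref{l12} makes all its critical points near $-\infty$ strict local maxima (or all strict local minima). So there is at most one, and $\tau=\alpha'$ eventually has one sign.
\item On an interval where $\tau$ has one sign, Lemma \ref{ll} rules out two consecutive critical points of $\tau$. In $H\cup C$ they would be of the same type. In $S$, every local maximum value of $\tau$ lies below every local minimum value, which is incompatible with $\tau$ being monotone between consecutive critical points.
\item Hence $\tau$ is eventually monotone and, being bounded, converges to some $c$.
\item Now L'H\^opital applies in the legitimate direction. Since $\eta\to+\infty$ and $\alpha'/\eta'=-1/(a\tau)\to-1/(ac)$ (or $\pm\infty$ if $c=0$), comparison with $\alpha/\eta\to\pm1$ forces $ac=\mp1$.
\end{itemize}
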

\begin{proof}
	Assume by contradiction that the decreasing function $\eta(s)$ is limited in $(\omega_{-},s_0)$.  As $\alpha^{2}(s)+\tau^{2}(s)=\eta^{2}(s)+\delta$, $\delta \in \{-1,0,1\}$, so the functions $\alpha(s)$ and $\tau(s)$ are limited on $(\omega_{-},s_0)$ and $ \lim_{s \to \omega_{-}} \eta(s)$ exists. Since $I$ is a maximal interval, $\omega_- = -\infty$. On the other hand, $\eta''$ is bounded on $(-\infty,s_0)$, and by Barbalat's lemma we have $\lim_{s \to \infty} \eta'(s) = 0$. Since, $\eta^{\prime}(s)=-a\tau^{2}(s)$, then $\lim_{s \to \infty} \tau(s) = 0$, and again by Barbalat's lemma, since $\tau''$ is bounded on $(\bar{s}, \infty)$, it follows that $\lim_{s \to \infty} \tau'(s) = 0$, that is, $\lim_{s \to \infty} \alpha(s) =\lim_{s \to -\infty}-\tau^{\prime}(s)-a\tau(s)\eta(s)=0$ and $\lim_{s \to -\infty}\eta^{2}(s)=\lim_{s \to -\infty}\alpha^{2}(s)+\tau^{2}(s)-\delta=-\delta$. This contradicts Lemma \ref{14}, because $(0,0,1) \in H$ and $(0,0,0)$ are global attractors for solutions in $H$ and $C$, respectively, and, in these sets, the function $\eta(s)$ is positive. Therefore, $\lim_{s \to \omega_{-}}\eta(s)=+\infty$.  
	
	Suppose that $\tau(s)$ is bounded on $(\omega_-, s_0)$ for some $s_0 \in I$, that is, $|\tau(s)|<k$ for all $s<s_0$ and $\alpha(s)$ is unbounded on $(\omega_-, s_0)$ because $\alpha^2(s)+\tau^{2}(s)=\eta^{2}(s)+\delta$, $\delta \in \{-1,0,1\}$. Thus, it follows from first equation of \eqref{s2} that $|\alpha(s)-\alpha(s_0)|<k|s-s_0|$ and $\omega_- = -\infty$. Moreover, 
	$$\lim_{s\to -\infty}\frac{\eta^2(s)}{\alpha^{2}(s)}=\lim_{s\to -\infty}1+\frac{\tau^2(s)-\delta}{\alpha^{2}(s)}=1,$$ 
    and $$ \pm 1=\lim_{s \to -\infty}\frac{-\eta(s)}{\alpha(s)}=\lim_{s \to -\infty}\frac{a\tau^{2}(s)}{\tau(s)}=\lim_{s \to -\infty}a\tau(s).$$	
	
	Suppose now that $\tau(s)$ is unbounded on $(\omega_-, s_0)$, we will prove that $\alpha(s)$ is unbounded on $(\omega_-,s_0)$. Note that
	$$\frac{d\eta}{d\alpha}=-a \tau(s).$$
	Suppose by contradiction that $\alpha(s)$ is bounded on $(\omega_-, s_0)$, then for all $s<s_0$ we have: if $\lim_{s \to \omega_{-}}\tau(s)=-\infty$, for some $s<s_0$  
	$$\frac{d\eta}{d\alpha}=-a\tau(s)>2\alpha, \,\, \text{and}\,\, -\eta(s)+\eta(s_0)>-\alpha^{2}(s)+\alpha^2(s_0);$$
	if $\lim_{s \to \omega_{-}}\tau(s)=+\infty$, for some $s<s_0$ and $\eta(s_0)>0$, we have $4\eta^{2}(s)=3\eta^{2}(s)-\delta+\alpha^{2}(s)+\tau^{2}(s)>\tau^{2}(s)$ and
	$$\frac{d\eta}{d\alpha}=-a\tau(s)>-2a\eta(s), \,\, \text{and}\,\, \ln\left(\frac{\eta(s_0)}{\eta(s)}\right)>2a\alpha(s)-2a\alpha(s_0).$$
	In both cases we obtain a contradiction, because $\lim_{s \rightarrow \omega_-}\eta(s)=+\infty.$ Therefore, $\lim_{s \to \omega_{-}}|\alpha(s)|=+\infty$ and it follows from Lemma \ref{p} that $\omega_{-}>-\infty.$ 
\end{proof}

The next lemma presents the behaviour of $\tau(s)$ on $(s_0,\omega_+)$ , $s_0 \in I$, in the case where $\psi(0) \in S$.
\begin{lemma} \label{p2}
	Let $\psi(s)=(\alpha(s),\tau(s),\eta(s))$ be a solution non-trivial  of \eqref{s2} defined on the maximal interval $I$, $a>0$, $\epsilon=1$ and initial condition $\psi(0)\in S$, where $S$ is given by \eqref{h}. Let $s_0 \in I$. If $\tau(s)$ is bounded on $(s_0, \omega_{+})$, then $\omega_{+}=+\infty$, $\lim_{s \to +\infty}a\tau(s)=\pm1$ and $ \lim_{s \to +\infty}-\eta(s)=\lim_{s \to +\infty}|\alpha(s)|=+\infty.$ If $\tau(s)$ is unbounded on $(s_0, \omega_{+})$, then $\omega_{+}<+\infty$ and $ \lim_{s \to \omega_{+}}|\tau(s)|=\lim_{s \to \omega_{+}}-\eta(s)=\lim_{s \to \omega_{+}}|\alpha(s)|=+\infty.$ 
\end{lemma}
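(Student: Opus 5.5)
The plan is to mirror the proof of Lemma \ref{p1}, now at the right end $\omega_+$. Two facts shape the argument. First, since $\psi(0)\in S$, the points $(0,0,\pm1)\in H$ and $(0,0,0)$ are not available as limits. Second, $\eta$ is strictly decreasing by Lemma \ref{l12}.

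\textbf{Step 1: $\eta\to-\infty$ at $\omega_+$.} Suppose, for contradiction, that $\eta$ is bounded below on $(s_0,\omega_+)$. From $\alpha^2+\tau^2=\eta^2+1$, both $\alpha$ and $\tau$ are then bounded there. Maximality of $I$ gives $\omega_+=+\infty$, and the monotone limit $\lim_{s\to+\infty}\eta(s)$ exists. Since $\eta''=2a\alpha\tau+2a^2\tau^2\eta$ is bounded, Barbalat's lemma gives $\eta'=-a\tau^2\to0$, hence $\tau\to0$. The expression $\tau''$ is polynomial in $\psi$ and therefore bounded, so Barbalat applied to $\tau$ gives $\tau'\to0$. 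Hence $\alpha=-\tau'-a\tau\eta\to0$. The constraint then forces $\eta^2\to-1$, which is impossible. So $\lim_{s\to\omega_+}\eta(s)=-\infty$.

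\textbf{Step 2: the case $\tau$ bounded.} Let $|\tau|<k$ on $(s_0,\omega_+)$. Then $|\alpha(s)-\alpha(s_0)|\le k|s-s_0|$, and since $|\eta|\to\infty$, the constraint forces $|\alpha|\to\infty$. If $\omega_+$ were finite, $\alpha$ would stay bounded, so $\omega_+=+\infty$. The constraint also gives $\eta^2/\alpha^2\to1$, so $-\eta/\alpha\to\pm1$. Both $\eta$ and $\alpha$ are unbounded and monotone near the end ($\alpha$ is eventually monotone because its critical points are strict extrema of alternating type by Lemma \ref{l12}, and $|\alpha|\to\infty$). Applying L'Hôpital gives
\[
\lim_{s\to+\infty}\frac{-\eta(s)}{\alpha(s)}=\lim_{s\to+\infty}\frac{a\tau^2(s)}{\tau(s)}=\lim_{s\to+\infty}a\tau(s),
\]
so $\lim a\tau=\pm1$.

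\textbf{Step 3: the case $\tau$ unbounded.} Here I would first upgrade unboundedness to $|\tau|\to\infty$. By Lemma \ref{ll}, a critical point of $\tau$ with $a|\tau|>1$ is a minimum when $\tau>0$ and a maximum when $\tau<0$. So once $|\tau|$ exceeds $1/a$, it cannot turn back; hence $\tau$ is eventually monotone and $|\tau|\to\infty$. Next I show $|\alpha|\to\infty$ by contradiction, using $d\eta/d\alpha=-a\tau$ exactly as in Lemma \ref{p1}. Assume $\alpha$ is bounded. Near the end, $|\tau|<2|\eta|$, so
\[
\left|\frac{d\eta}{d\alpha}\right|\le 2a|\eta|.
\]
Integrating gives $|\ln|\eta(s)|-\ln|\eta(s_1)||\le 2a|\alpha(s)-\alpha(s_1)|$, which is bounded. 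This contradicts $|\eta|\to\infty$. With $|\tau|,|\eta|,|\alpha|$ all tending to $\infty$, Lemma \ref{p} yields $\omega_+<+\infty$.

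The main obstacle is Step 3: the passage from "unbounded" to "tends to infinity" for $\tau$, and the sign bookkeeping for the logarithmic estimate now that $\eta<0$. The monotonicity argument via Lemma \ref{ll} is what I would rely on to settle the first point.
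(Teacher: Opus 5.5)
Your plan follows the paper's route. The paper also handles the bounded case and $\eta\to-\infty$ ``analogously to Lemma \ref{p1}'' and gets $|\alpha|\to\infty$ from $d\eta/d\alpha=-a\tau$. Your Steps 1 and 3 are correct. Step 3 even supplies the upgrade from ``$\tau$ unbounded'' to $|\tau|\to\infty$, which the paper takes for granted. Phrase it as follows: by Lemma \ref{ll} only strict minima occur in $\{a\tau>1\}$, so at a record time $s_1$ with $a\tau(s_1)>1$ you necessarily have $\tau'(s_1)>0$, and $\tau$ then increases to $+\infty$ (symmetrically for $\tau\to-\infty$).

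The genuine gap is the last inference of Step 2. L'H\^opital's rule only goes one way: if $\lim(-\eta'/\alpha')$ exists, then $\lim(-\eta/\alpha)$ equals it. You know $-\eta/\alpha\to\pm1$ and conclude that $-\eta'/\alpha'=a\tau$ converges. That is the converse, and it is false in general, even for monotone unbounded functions with nonvanishing derivative: $f(s)=s+\tfrac12\sin s$, $g(s)=s$ has $f/g\to1$ while $f'/g'$ oscillates. Eventual monotonicity of $\alpha$ only secures $\alpha'=\tau\neq0$, which is the hypothesis on the denominator, not the existence of $\lim a\tau$. The paper's Lemma \ref{p1} makes the same move, so ``analogous to Lemma \ref{p1}'' inherits the problem.

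What is missing is a proof that $\tau$ converges, and the paper's lemmas give it.
\begin{itemize}
\item Apart from the constant solution of Lemma \ref{tri}, where $a\tau\equiv\pm1$ already, critical points of $\tau$ in $S$ satisfy $\tau''=\tau(a^2\tau^2-1)\neq0$. So they are nondegenerate and isolated.
\item Since $\alpha$ is eventually monotone, $\tau$ has a fixed sign near $+\infty$; say $\tau>0$.
\item By Lemma \ref{ll}, critical points of $\tau$ there are strict maxima when $a\tau<1$ and strict minima when $a\tau>1$.
\item Two consecutive critical points are therefore impossible. A maximum followed by a minimum would put the minimum value below $1/a$. A minimum followed by a maximum would put the maximum value above $1/a$.
\item The case $\tau<0$ is symmetric.
\end{itemize}
So $\tau$ is eventually monotone and bounded, hence $\tau\to L$. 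Now L'H\^opital in the legitimate direction gives $-\eta/\alpha\to aL$, and $|\eta/\alpha|\to1$ forces $a|L|=1$.
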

\begin{proof}
	The case that $\tau(s)$ is bounded on $(s_0, \omega_{+})$ is proved analogue to Lemma \ref{p1}, as well as, to prove that $\eta(s)$ is unbounded. We will proof that $\alpha(s)$ is unbounded on $(s_0,\omega_+)$. Observe that if $\alpha(s)$ is bounded. for all $s>s_0$. Note that
	$$\frac{d\eta}{d\alpha}=-a \tau(s).$$
	Suppose by contradiction that $\alpha(s)$ is bounded on $(s_0,\omega_+)$. If $\lim_{s \to \omega_{+}}\tau(s)=-\infty$, for some $s>s_0$,  we have 
	$$\frac{d\eta}{d\alpha}=-a\tau(s)>-2\alpha, \,\, \text{and}\,\, -\alpha^{2}(s)+\alpha^2(s_0)<\eta(s)-\eta(s_0).$$
	If $\lim_{s \to \omega_{+}}\tau(s)=+\infty$, for some $s>s_0$ and $\eta(s_0)<0$,  we have $4\eta^{2}(s)=3\eta^{2}(s)-1+\alpha^{2}(s)+\tau^{2}(s)>\tau^{2}(s)$ and
	$$\frac{d\eta}{d\alpha}=-a\tau(s)>2a\eta(s), \,\, \text{and}\,\, \ln\left(\frac{\eta(s)}{\eta(s_0)}\right)<2a\alpha(s)-2a\alpha(s_0).$$
	In both cases we obtain a contradiction, because $\lim_{s \rightarrow \omega_+} \eta(s)=-\infty.$ Therefore, $\lim_{s \to \omega_{+}}|\alpha(s)|=+\infty$	
\end{proof}

We show in the next lemma that there exist solutions such that $I=\mathbb{R}$ and that if $a|\tau(s_0)|\geq1$, then the maximal interval cannot be $\mathbb{R}$.
\begin{lemma} \label{c}
	Let $\psi(s)=(\alpha(s),\tau(s),\eta(s))$ be a solution non-trivial  of \eqref{s2} defined on the maximal interval $I$, $a>0$, $\epsilon=1$, $s_0 \in I$ and initial condition $\psi(0)\in H\cup C$, where $H$ and $C$ are given by \eqref{h}. If $a|\tau(0)|\geq 1$ and $\psi(0) \in H\cup C$, then the function $\tau(s)$ is unbounded on $(\omega_{-},s_0)$, $s_0 \in I$, $\omega_{-}>-\infty$ and $ \lim_{s \to \omega_{-}}|\tau(s)|=\lim_{s \to \omega_{-}}\eta(s)=\lim_{s \to \omega_{-}}|\alpha(s)|=+\infty.$ Moreover, there exist solutions such that $I=\mathbb{R}$, $ \lim_{s \to -\infty} a\tau(s)=\pm1$, $\lim_{s \to -\infty}\eta(s)=\lim_{s \to -\infty}|\alpha(s)|=+\infty$.
\end{lemma}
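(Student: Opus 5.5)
The plan is to combine the backward dichotomy of Lemma \ref{p1} with a sign computation for $\tau'$ in the region $a|\tau|\geq 1$. The existence of complete solutions will then follow from a continuity (shooting) argument.

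\textbf{Step 1: sign of $\tau'$ where $a|\tau|\ge1$.} Take $\psi(s)\in H\cup C$, so that
$$\alpha^2+\tau^2-\eta^2=\delta\le 0 \quad\text{and}\quad \eta>0.$$
By the normalization chosen after Lemma \ref{lem3}, this gives $\eta^2\geq\alpha^2+\tau^2$. Suppose $\tau\neq0$; then $\eta>|\alpha|$. If $a\tau(s)\geq1$, then $a\tau\eta\geq\eta>|\alpha|$, hence
$$\tau'(s)=-\alpha(s)-a\tau(s)\eta(s)<0.$$
Symmetrically, if $a\tau(s)\leq -1$, then $\tau'(s)>0$. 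This is consistent with Lemma \ref{ll}, which says critical points of $\tau$ in $H\cup C$ satisfy $a|\tau|<1$.

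Now let $a\tau(0)\geq1$ (the case $a\tau(0)\le -1$ is symmetric). As $s$ decreases from $0$, $\tau$ strictly increases as long as $a\tau\geq1$, so it never leaves this region. Hence $\tau$ is strictly monotone on $(\omega_-,0]$ with $a\tau(s)>a\tau(0)\geq 1$ for $s<0$.

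\textbf{Step 2: first assertion.} Apply the dichotomy of Lemma \ref{p1} on $(\omega_-,0)$. If $\tau$ were bounded there, Lemma \ref{p1} would give $\omega_-=-\infty$ and $\lim_{s\to-\infty}a\tau(s)=\pm1$. But by Step 1 this limit equals $\sup_{s<0}a\tau(s)>a\tau(0)\geq1$, which is a contradiction. Therefore $\tau$ is unbounded on $(\omega_-,s_0)$. The second alternative of Lemma \ref{p1} then gives $\omega_->-\infty$ and
$$\lim_{s\to\omega_-}|\tau(s)|=\lim_{s\to\omega_-}\eta(s)=\lim_{s\to\omega_-}|\alpha(s)|=+\infty.$$

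\textbf{Step 3: existence of complete solutions, by shooting.} Consider a connected one-parameter family of initial data $q_c\in H$, say $q_c=(0,c,\sqrt{1+c^2})$ with $c\in[-c_0,c_0]$ and $ac_0\geq1$. Define
$$A_\pm=\{c:\ \pm a\tau(s,q_c)>1 \text{ for some } s\le 0\}.$$
By continuous dependence on initial data, both sets are open. By Step 1, once a backward orbit enters $\{a\tau>1\}$ it stays there for all smaller $s$, and likewise for $\{a\tau<-1\}$. Hence $A_+\cap A_-=\emptyset$.

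Both sets are nonempty: $c_0\in A_+$ or lies on its boundary, so after enlarging $c_0$ slightly we get $c_0\in A_+$, and similarly $-c_0\in A_-$. By connectedness of $[-c_0,c_0]$ there is some $c^*$ in neither set. For this $c^*$ we have $a|\tau(s)|\le1$ for all $s\le0$, so $\tau$ is bounded backward.

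Lemma \ref{p1} then gives $\omega_-=-\infty$, $\lim_{s\to-\infty}a\tau(s)=\pm1$ and $\lim_{s\to-\infty}\eta(s)=\lim_{s\to-\infty}|\alpha(s)|=+\infty$. Lemma \ref{14} gives $\omega_+=+\infty$, so $I=\mathbb{R}$. Finally, $q_{c^*}$ is non-trivial, because the only singular points in $H$ are $(0,0,\pm1)$, and a solution through $(0,0,1)$ is constant and would have $\tau\equiv0$ backward. That is incompatible with $\lim_{s\to-\infty}a\tau(s)=\pm1$; we can also simply choose the family to avoid $(0,0,1)$.

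\textbf{Main obstacle.} The delicate point is Step 3: the openness and disjointness of $A_\pm$, and the need to check that the resulting solution is genuinely non-trivial and non-stationary. Step 1 is what makes $A_\pm$ forward-invariant under backward flow, so the whole argument hinges on the inequality $a\tau\eta>|\alpha|$ valid in $H\cup C$.
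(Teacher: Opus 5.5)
Your Steps 1 and 2 are sound. The sign computation $\tau'<0$ on $\{a\tau\ge1\}$ and $\tau'>0$ on $\{a\tau\le-1\}$ gives monotonicity more cleanly than the paper's appeal to Lemma \ref{ll}.

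\textbf{The gap is in Step 3.} Your family $q_c=(0,c,\sqrt{1+c^2})$ passes through the rest point $q_0=(0,0,1)$. The parameter $c=0$ automatically lies outside $A_+\cup A_-$, so connectedness by itself only guarantees the trivial solution.

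Your argument that $c^*$ is non-trivial is circular. The limit $\lim_{s\to-\infty}a\tau=\pm1$ comes from Lemma \ref{p1}, which assumes a non-trivial solution, so it cannot be used to rule out $c^*=0$.

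This is not a technicality. Identify $H\cap\{\eta>0\}$ with the $(\alpha,\tau)$-plane, put $u=-s$, and take $a\ge2$. Then the cone $K=\{\alpha\le0,\ \tau\ge-\alpha\}$ minus the origin is invariant under the backward flow:
\begin{itemize}
\item On the ray $\alpha=0$, $\tau>0$ one has $\frac{d\alpha}{du}=-\tau<0$, so orbits enter $K$.
\item On the ray $\tau=-\alpha>0$ one has $\frac{d}{du}(\tau+\alpha)=\tau(a\eta-2)>0$, since $\eta>1$ there.
\item Inside $K$ one has $\tau>0$ and $\frac{d\tau}{du}=\alpha+a\tau\eta\ge\tau(a\eta-1)\ge\tau$.
\end{itemize}
So every $c\in(0,c_0]$ lies in $A_+$. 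The map $(\alpha,\tau,\eta)\mapsto(-\alpha,-\tau,\eta)$ is a symmetry of \eqref{s2}, so every $c\in[-c_0,0)$ lies in $A_-$. Hence for $a\ge2$ your family yields exactly $c^*=0$ and no non-trivial complete solution.

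\textbf{The fix.} The remedy you mention in passing is the right one, and it has to be the main argument. Take a path in $H\cap\{\eta>0\}$ from a point with $a\tau>1$ to a point with $a\tau<-1$ that avoids $(0,0,1)$. For example, fix $\alpha(0)=\alpha_0<0$ and let $\tau(0)$ range over $[-c_0,c_0]$ with $ac_0>1$.
\begin{itemize}
\item Your openness and disjointness arguments go through unchanged.
\item The resulting $c^*$ is non-trivial, because $(0,0,1)$ is the only rest point on that sheet.
\item Lemmas \ref{p1} and \ref{14} then give $I=\mathbb{R}$ and the stated limits.
\end{itemize}

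This is the same mechanism as the paper's Wazewski argument; its segment $Z$ also sits at $\bar\alpha_0<0$ and so avoids the rest point. The paper's set $W=\{\bar\alpha<0,\ 0<a\bar\tau<1\}$ also pins the limit to $a\tau\to1$, while your shooting leaves the sign undetermined. Applying the symmetry above to the solution you obtain gives the other sign.
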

\begin{proof}
Let $\psi(s)$ be such that $\psi(0)=(\alpha(0), \tau(0), \eta(0)) \in H\cup C$ and $a|\tau(0)|\geq 1$. We have the Lemma \ref{p1} that $\lim_{s \to +\infty}\tau(s)=0$ and by Lemma \ref{ll} that $\tau(s)$ dos not have critical point if $a|\tau(s)|\geq 1$, then $\tau(s)$  is monotone on $(\omega_{-}, 0)$ and $\lim_{s \to \omega_{-}}|\tau(s)|=+\infty.$ 

To prove the other part, we will use Wazewski's principle. For this, consider $\phi(s)=\psi(-s)$, $\overline{\alpha}(s)=\alpha(-s)$, $\overline{\tau}(s)=\tau(-s)$, $\overline{\eta}(s)=\eta(-s)$ and $\overline{\eta}(s)=\sqrt{\overline{\alpha}^2(s)+\overline{\tau}^2(s)-\delta}$, $\delta \in \{-1,0\}$. Thus, the system \eqref{s2} reduce to
\begin{equation}\label{21}
	\left\{\begin{array}{ll}
		\overline{\alpha}^{\prime}(s)=-\overline{\tau}(s),\\
		\overline{\tau}^{\prime}(s)=\overline{\alpha}(s) + a\overline{\tau}(s)\sqrt{\overline{\alpha}^2(s)+\overline{\tau}^2(s)-\delta},\\		
	\end{array}\right.
\end{equation}
and by Lemma \ref{14}, we have that $\lim_{s\to-\infty}\overline{\alpha}(s)=\lim_{s\to-\infty}\overline{\tau}(s)=0.$ Define the following set $W=\{(\overline{\alpha},\overline{\tau}) \in \mathbb{R}^{2}:\overline{\alpha}<0, 0<a\overline{\tau}<1\}$, $a>0$. The boundary $\partial W$ of $W$ is given by $\partial W= W_1 \cup W_2 \cup W_3,$ where $W_1=\{(\overline{\alpha},\overline{\tau}) \in \mathbb{R}^{2}:\overline{\alpha}<0, \overline{\tau}=0\},$ $W_2=\{(\overline{\alpha},\overline{\tau}) \in \mathbb{R}^{2}:\overline{\alpha}=0, 0\leq a\overline{\tau}\leq 1\}$ and $, W_3=\{(\overline{\alpha},\overline{\tau}) \in \mathbb{R}^{2}:\overline{\alpha}<0, a\overline{\tau}=1\}.$ As $(0,0)$ is a singular solution, we consider $a\tau(s)>0$ in $W_2.$

Define $V(s)=(\overline{\alpha}^{\prime}(s),\overline{\tau}^{\prime}(s))$. If $(\overline{\alpha}(0),\overline{\tau}(0))$ belong to $W_1$, then $V(0)=(0,\overline{\alpha}(0))$ points outward from $W$ in $W_1$ since $\overline{\alpha}(0)<0$. If $(\overline{\alpha}(0),\overline{\tau}(0))$ belong to $ W_2$, then $V(0)=(-\overline{\tau}(0),a\overline{\tau}(0)\sqrt{\overline{\tau}^2(0)-\delta})$ points inward from $W$ in $W_2$ since $\overline{\tau}(0)>0$ and $\delta \in \{-1,0\}$. If $(\overline{\alpha}(0),\overline{\tau}(0))$ belong to  $W_3$, then $V(0)=(-a^{-1},\overline{\alpha}(0)+\sqrt{\overline{\alpha}^{2}(0)+a^{-2}-\delta})$ points outward from $W$ in $W_3$ since $\delta \in \{-1,0\}$. The diagram in Figure \ref{f4} illustrates the field $V$ on the boundary of $W$.
\begin{figure}[h!]
\centering
\includegraphics[height=3 cm]{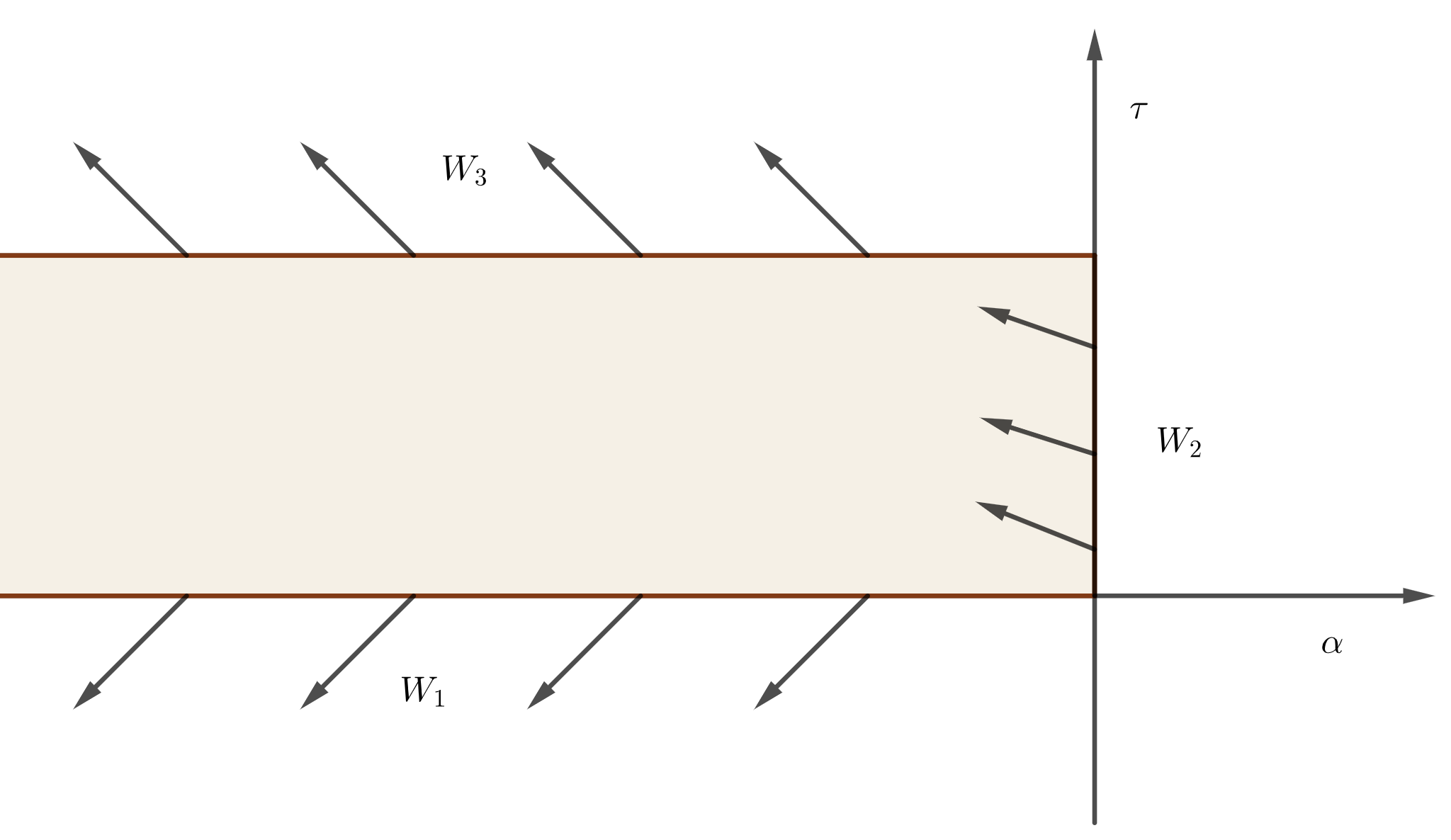}
\caption{Representation of the field $V$ on the boundary of $W$}
\label{f4}
\end{figure}

Thus, the set of strict egress points coincides with the set of egress points. Let $W^{-}$ be the set of strict egress points. Taking $P_1=(\overline{\alpha}_0,0)$ belong to $W_1$, $P_2=(\overline{\alpha}_0,a^{-1})$ belong to $W_3$, $\overline{\alpha}_0<0$ and consider the set $Z=[P_1,P_2]=\{P_1(1-t)+tP_2; 0\leq t\leq 1\} \subset (W \cup \partial W).$ Hence, $W^{-}\cap Z=\{P_1, P_2\}$ and $W^{-}\cap Z$ is a retract of $W^{-}$, but not a retract of $Z$, because $Z$ is a connected set. Therefore, it follows from Wazewski's principle that there exists $(\overline{\alpha}_0, \overline{\tau}_0) \in Z\setminus \{P_1, P_2\}$ such that the solution $\phi(s)=(\overline{\alpha}(s), \overline{\tau}(s))$ with $\phi(0)=(\overline{\alpha}_0, \overline{\tau}_0)$ belong to $W$ for each $s>0,$ that is, $0<\overline{\tau}(s)<a^{-1}$. Thus, it follows \ref{p1} that $\omega_+=+\infty$, $\lim_{s \to +\infty}a\overline{\tau}(s)=1$ and then $\lim_{s \to -\infty}a\tau(s)=1$. 

Considering $\overline{W}=\{(\overline{\alpha},\overline{\tau}) \in \mathbb{R}^{2}:\overline{\alpha}>0, -1<a\overline{\tau}<0\}$, $a>0$, the argument is analogous for proving that there exist solutions with $\lim_{s \to -\infty}a\tau(s)=-1$.

\end{proof}
The next Lemma presents the behavior of the function when $a|\tau(0)|>1.$

\begin{lemma} \label{p31}
	Let $\psi(s)=(\alpha(s),\tau(s),\eta(s))$ be a solution non-trivial  of \eqref{s2} defined on the maximal interval $I$, $a>0$, $a\neq 1$, $\epsilon=1$, $s_0 \in I$ and initial condition $\psi(0)\in  S$, where $S$ is given by \eqref{h}. If $a|\tau(0)|\geq 1$, then $\tau(s)$ is unbounded on $(\omega_{-},0)$, $\omega_{-}>-\infty$ and $ \lim_{s \to \omega_{-}}|\tau(s)|=\lim_{s \to \omega_{-}}\eta(s)=\lim_{s \to \omega_{-}}|\alpha(s)|=+\infty$,  $\omega_{-}>-\infty$ or $\tau(s)$ is unbounded on $(0,\omega_{+})$ and \newline $ \lim_{s \to \omega_{+}}|\tau(s)|=\lim_{s \to \omega_{+}}-\eta(s)=\lim_{s \to \omega_{+}}|\alpha(s)|=+\infty.$ Moreover, there exist solutions such that $\tau(s)\neq 0$ for all $s \in I$ and it is unbounded on $(\omega_{-},0)$ and $(0, \omega_{+})$.
\end{lemma}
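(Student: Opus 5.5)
The plan is to exploit the critical-point structure of $\tau$ on $S$ from Lemma \ref{ll}, together with the dichotomies of Lemmas \ref{p1} and \ref{p2}. On $S$ we have $\delta=1$, so $\eta^2=\alpha^2+\tau^2-1$. Suppose $a\tau(0)\geq 1$; the case $a\tau(0)\leq -1$ is symmetric, replacing $\tau$ by $-\tau$ and using the corresponding branches of Lemma \ref{ll}. Since $a\neq 1$, Lemma \ref{tri} excludes the constant solutions with $a|\tau|=1$. By Lemma \ref{ll}, every critical point $s_1$ of $\tau$ with $a\tau(s_1)>1$ is a strict local minimum. Hence on the connected component of $\{s: a\tau(s)>1\}$ containing $0$, or adjacent to $0$ if $a\tau(0)=1$, the function $\tau$ has at most one critical point, and that point is a minimum. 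So $\tau$ is monotone near each end of this component.

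The key step is the case analysis. If the component contains $(\omega_-,0)$ and $\tau$ is decreasing there as $s\to\omega_-$, i.e.\ increasing toward $\omega_-$, then $\tau$ is either unbounded on $(\omega_-,0)$ or bounded. The bounded case is impossible: by Lemma \ref{p1}, boundedness forces $a\tau\to\pm1$, while here $a\tau$ is increasing toward $\omega_-$ from a value $\geq 1$, hence bounded below by $a\tau(0)$. The case $a\tau(0)=1$ with $\tau$ constant is excluded by $a\neq1$. Lemma \ref{p1} then gives the stated limits with $\omega_->-\infty$.

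Otherwise, $\tau$ is increasing on the right part of the component. I will argue that $\tau$ cannot leave $\{a\tau>1\}$ to the right: leaving would require passing through a local maximum inside the region, which Lemma \ref{ll} forbids, or reaching the level $a\tau=1$ while decreasing, which contradicts monotonicity. So $a\tau>1$ is increasing on $(0,\omega_+)$, or at least beyond the minimum. A bounded limit would contradict Lemma \ref{p2}'s conclusion $a\tau\to\pm1$. Therefore $\tau$ is unbounded, and Lemma \ref{p2} yields the limits at $\omega_+$. I expect the main obstacle to be organizing this "at least one end" dichotomy cleanly, including the boundary case $a\tau(0)=1$, where $\tau'(0)$ may point either way.

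For the existence part, I would take $\tau(0)$ at a critical point of $\tau$ with $a\tau(0)>1$. This can be arranged by choosing $\psi(0)\in S$ with $\alpha(0)=-a\tau(0)\eta(0)$ and $(a^2\tau(0)^2-1)\eta(0)^2=1-\tau(0)^2$, as in \eqref{i}. For example, with $a>1$ take $1<a\tau(0)$ and $\tau(0)<1$, which is possible when $a>1$; for $a<1$ take $\tau(0)>1/a>1$, where $\eta(0)^2<0$ fails, so instead use $\tau(0)>1$ only when $a>1$. Then $0$ is a global minimum of $\tau$ in its region, $\tau$ is monotone on both sides, and it stays $>1/a>0$. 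By the previous paragraph applied on each side, $\tau$ is unbounded on both $(\omega_-,0)$ and $(0,\omega_+)$ and never vanishes. For $a<1$, I would instead look for a local maximum with $a\tau(0)<-1$, which is the symmetric case, and check the sign condition in \eqref{i} there. Verifying the admissibility of these initial conditions is the delicate computation.
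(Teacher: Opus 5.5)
Your treatment of the dichotomy is essentially the paper's argument. Lemma \ref{ll} makes every critical point with $a\tau>1$ a strict local minimum, so $\tau$ cannot turn back once it moves away from the level $1/a$. Lemmas \ref{p1} and \ref{p2} then exclude a bounded monotone end, since that would force $a\tau\to\pm1$. You are a bit more careful than the paper here: you handle $\tau'(0)=0$, and you use Lemma \ref{p2} at $\omega_+$. Your existence construction for $a>1$ is also the paper's: a critical point with $\alpha(0)=-a\tau(0)\eta(0)$ and $1/a<\tau(0)\le 1$. For instance, $\psi(0)=(0,1,0)$ works.

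The step that fails is your fallback for $0<a<1$. Equation \eqref{i} involves only $\tau(s_0)^2$, so a critical point with $a\tau(s_0)<-1$ is excluded for the same reason as one with $a\tau(s_0)>1$. In both cases $\tau(s_0)^2>1/a^2>1$, so the right-hand side $1-\tau(s_0)^2$ is negative while the left-hand side is nonnegative.

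No other construction can work either, because the existence claim is false for $0<a<1$. Suppose $\tau>0$ on $I$; the case $\tau<0$ follows from the symmetry $(\alpha,\tau,\eta)\mapsto(-\alpha,-\tau,\eta)$ of \eqref{s2}. The same computation shows that every critical point satisfies $0<a\tau<1$, so $\tau''=\tau(a^2\tau^2-1)<0$ there. Hence all critical points are strict local maxima, and $\tau$ has at most one of them. So $\tau$ is monotone or increasing-then-decreasing, and in either case it is bounded on $(\omega_-,0)$ or on $(0,\omega_+)$.

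The last assertion of the lemma therefore holds only for $a>1$. The paper's proof tacitly assumes this as well: its initial condition with $\alpha(0)=-a\eta(0)\tau(0)$ and $a\tau(0)>1$ lies on $S$ only when $a>1$. You should state this restriction rather than look for a construction when $a<1$.
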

\begin{proof}
 From Lemma \ref{ll}, if $\psi(0)\in S$ and $s_0$ a critical point of $\tau(s)$ with $a|\tau(0)|\geq 1$, then $s_0$ is minimum local if $a\tau(s_0)> 1$ and it is maximum local if $a\tau(_0)< -1$. Thus, if $a\tau(0)\geq 1$ and $\psi(0) \in S$ and $\tau(s)$ is decreasing at $s=0$, then $\tau(s)$  is monotone on $(\omega_{-}, 0)$ and $\lim_{s \to \omega_{-}}\tau(s)=+\infty$; if $a\tau(0)\geq 1$ and $\tau(s)$ is increasing at $s=s_0$, then $\tau(s)$  is monotone on $(0, \omega_{+})$ and $\lim_{s \to \omega_{+}}\tau(s)=+\infty$. A similar situation occurs when $a\tau(0)\leq -1$. Further, it follows from Lemma \ref{p1} that $\alpha(s)$, $\eta(s)$ are unbounded when $\tau(s)$ is unbounded. 

 To prove the last part, take $\psi(0)\in S$ such that $\alpha(0)=-a\eta(0)\tau(0)$ and $a\tau(0)>1$, then $s=0$ is local minimum point for $\tau(s)$ and by Lemma \ref{ll}, $s=0$ is the global minimum point. Therefore, $a\tau(s)>1$ for all $s$, $\tau(s)$ is decreasing on $(\omega_{-},0)$ and increasing on $(0, \omega_{+})$. Using the first part of the lemma, we have that $\lim_{s \to \omega_{-}}\tau(s)=+\infty$ and $\lim_{s \to \omega_{+}}\tau(s)=+\infty$.  
\end{proof}
Wazewski's principle can also be used to study solutions in the set $S.$  
\begin{lemma} \label{c1}
	Let $\psi(s)=(\alpha(s),\tau(s),\eta(s))$ be a solution non-trivial  of \eqref{s2} defined on the maximal interval $I$, $a>0$, $\epsilon=1$, $s_0 \in I$ and initial condition $\psi(0)\in S$, where $S$ is given by \eqref{h}. There exist solutions such that $\omega_{-}=-\infty$ and $\lim_{s \to -\infty}a\tau(s)=\pm 1$. There also exist solutions such that $\omega_{-}=-\infty$ and $\lim_{s \to +\infty}a\tau(s)=\pm 1$.
\end{lemma}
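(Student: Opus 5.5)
The plan is to imitate the Wazewski argument of Lemma \ref{c}, now on $S$ ($\delta=1$). On $S$ the function $\eta$ may change sign, so I will not eliminate $\eta$ through a square root. Instead I work locally in the region where $\eta>1$, which is the relevant region as $s\to\omega_-$ by Lemma \ref{p1}. There $\eta=\sqrt{\alpha^2+\tau^2-1}$, and after the time reversal $\phi(s)=\psi(-s)$ system \eqref{s2} becomes
\begin{equation*}
\overline{\alpha}'=-\overline{\tau},\qquad \overline{\tau}'=\overline{\alpha}+a\overline{\tau}\sqrt{\overline{\alpha}^2+\overline{\tau}^2-1}.
\end{equation*}
The goal is a solution $\phi$ defined on $[0,+\infty)$ with $a\overline{\tau}$ staying in $(0,1)$. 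Lemma \ref{p1} then gives $\omega_-=-\infty$ and $\lim_{s\to-\infty}a\tau(s)=\pm1$; the sign is fixed by the chosen region, and is $+1$ because $\tau$ is bounded and positive.

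As the Wazewski set I take
\begin{equation*}
W=\{(\overline{\alpha},\overline{\tau}): \overline{\alpha}<-\sqrt{2},\ 0<a\overline{\tau}<1\}.
\end{equation*}
The condition $\overline{\alpha}<-\sqrt2$ keeps the radicand positive and ensures $\eta>1$. I will check the field on the three boundary pieces.

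On $\overline{\tau}=0$, $\overline{\tau}'=\overline{\alpha}<0$, so the flow exits.

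On $a\overline{\tau}=1$, $\overline{\tau}'=\overline{\alpha}+\sqrt{\overline{\alpha}^2+a^{-2}-1}$. This is positive (strict egress) exactly when $a<1$. For $a>1$ the sign reverses, so in that case I instead use the strip $1<a\overline{\tau}<R$ or a mirrored region. This dependence on $a$ is the main obstacle. The honest fix is to choose the region so that its upper boundary is the curve on which $\overline{\tau}'=0$, namely $\overline{\alpha}=-a\overline{\tau}\eta$, which is the locus analysed in Lemma \ref{ll}.

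On the vertical side $\overline{\alpha}=-\sqrt2$ we have $\overline{\alpha}'=-\overline{\tau}<0$, so the flow enters.

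Hence egress points coincide with strict egress points. I then take a segment $Z$ joining a point of $\{\overline{\tau}=0\}$ to a point of the upper exit boundary. $Z\cap W^-$ consists of the two endpoints, which is a retract of $W^-$ (the two components are separated), but it is not a retract of the connected set $Z$. Wazewski's principle then yields an initial point whose forward orbit stays in $W$ for all $s>0$. Throughout, $\overline{\alpha}$ decreases while $\overline{\tau}$ stays bounded, so the solution never leaves the domain where the reduction is valid.

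Finally, I lift back to $S$ by setting $\eta=\sqrt{\alpha^2+\tau^2-1}$ and undo the reversal, which gives the first claim. The mirror region $\{\overline{\alpha}>\sqrt2,\ -1<a\overline{\tau}<0\}$ gives the limit $-1$. For the statement about $s\to+\infty$ I would use the same construction without time reversal, in the region $\eta<-1$, which is where Lemma \ref{p2} places the forward end. Lemma \ref{p2} then converts boundedness of $\tau$ on $(s_0,\omega_+)$ into $\omega_+=+\infty$ and $a\tau\to\pm1$. The case $a=1$ is already covered by the explicit solutions of Lemma \ref{tri}.
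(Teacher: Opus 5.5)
For $0<a<1$ your argument is the paper's own: Wazewski's principle for the reversed system \eqref{21} with $\delta=1$ on $\{\overline{\alpha}<-c,\ 0<a\overline{\tau}<1\}$ and its mirror image, and the branch $\eta=-\sqrt{\alpha^2+\tau^2-1}$ without time reversal for the forward end. Your choice $c=\sqrt2$ instead of $c=1$ harmlessly keeps the radicand away from $0$ at the corner. The paper's proof simply assumes $0<a<1$ and says nothing about $a\ge 1$.

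\textbf{The gap: $a>1$.} Your attempt to go further is not finished. You list two candidate regions and commit to neither. The ``honest fix'' via the nullcline $\overline{\tau}'=0$ is unjustified as stated: the field is horizontal on that curve, so whether orbits exit through it depends on the curve's slope, which you never examine. Moreover, for $a>1$ the trouble is not only the failed boundary check, because no solution can stay in $\{0<a\overline{\tau}<1\}$ at all. Write $u=a\tau-1$; on $S$ one has $u'=-a(1-\tau^2)/(\alpha-\eta)-a\eta u$. Along a solution with $a\tau\to 1$ as $s\to-\infty$ we have $\alpha-\eta\to-\infty$, $\eta\to+\infty$ and $1-\tau^2\to 1-a^{-2}>0$. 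So the first term is positive near $-\infty$, and the second is positive wherever $u<0$. Hence $u\le 0$ at a sufficiently negative $s_2$ forces $u$ to be increasing and negative to the left of $s_2$, contradicting $u\to 0$. The limit $1/a$ must therefore be approached from above.

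\textbf{How to close it.} Your first candidate, the strip, is the right region and settles $a>1$. Take $W'=\{\overline{\alpha}<-1,\ 1<a\overline{\tau}<R\}$ with $R\ge a$; the radicand is at least $\overline{\tau}^2>0$ on its closure.
On $a\overline{\tau}=1$ we get $\overline{\tau}'=\overline{\alpha}+\sqrt{\overline{\alpha}^2-(1-a^{-2})}<0$, a strict exit.
On $a\overline{\tau}=R$, since $R^2a^{-2}\ge 1$, we get $\overline{\tau}'\ge\overline{\alpha}+R|\overline{\alpha}|=(R-1)|\overline{\alpha}|>0$, again a strict exit.
On $\overline{\alpha}=-1$ the flow enters.
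The same segment argument then gives an orbit with $1<a\overline{\tau}<R$ for all $s\ge 0$, and Lemma~\ref{p1} yields $\omega_-=-\infty$ and $a\tau\to 1$. The other cases are handled the same way:
for the limit $-1$ at $\omega_-$, use $\{\overline{\alpha}>1,\ -R<a\overline{\tau}<-1\}$;
for the forward end on the branch $\eta<0$, use $\{\alpha>1,\ 1<a\tau<R\}$ and $\{\alpha<-1,\ -R<a\tau<-1\}$, and then apply Lemma~\ref{p2}.

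\textbf{The case $a=1$.} Your appeal to Lemma~\ref{tri} rests on a convention. For $a=1$, $u=\tau-1$ satisfies $u'=u\,[(2+u)/(\alpha-\eta)-\eta]$. Along any solution with $\tau\to 1$ at $-\infty$ the bracket tends to $-\infty$, so $u\equiv 0$ is the only way to have $u\to 0$; the same holds for the limit $-1$. Thus for $a=1$ the only examples are the constant-curvature solutions of Lemma~\ref{tri}. The paper itself calls these trivial in the proof of Lemma~\ref{ll}, so they count only if ``non-trivial'' means merely $k\not\equiv 0$.
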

\begin{proof}
In this case, the function $\eta(s)$ can be any real number. As $\eta(s)$ is decreasing, consider initially $\eta(s)=\sqrt{\alpha^2(s)+\tau^2(s)-1}$. To establish the existence of solutions such that $\omega_{-}=-\infty$ and $\lim_{s \to -\infty}a\tau(s)=1$, it is enough to consider the system \ref{21} with $\delta=1$, $W=\{(\overline{\alpha},\overline{\tau}) \in \mathbb{R}^{2}:\overline{\alpha}<-1, 0<a\overline{\tau}<1\}$, $0<a<1$ and repeat the argument from the proof of Lemma \ref{c} in order to apply Wazewski's principle. Likewise, consider  $W^1=\{(\overline{\alpha},\overline{\tau}) \in \mathbb{R}^{2}:\overline{\alpha}>1, -1<a\overline{\tau}<0\}$, $0<a<1$ to guarantee the existence of a solution such that $\lim_{s \to -\infty}a\tau(s)=-1$.

For the other cases, take $\eta(s)=-\sqrt{\alpha^2(s)+\tau^2(s)-1}$, the system \eqref{s2} reduces to
\begin{equation*}
	\left\{\begin{array}{ll}
		\alpha^{\prime}(s)=\tau(s),\\
		\tau^{\prime}(s)=-\alpha(s) + a\tau(s)\sqrt{\alpha^2(s)+\tau^2(s)-1}.\\		
	\end{array}\right.
\end{equation*}
Consider $0<a<1$, and apply Wazewski's principle to the sets $W^2=\{(\alpha,\tau) \in \mathbb{R}^{2}:\alpha>1, 0<a\overline{\tau}<1\}$ and $W^3=\{(\alpha,\tau) \in \mathbb{R}^{2}:\alpha<-1, -1<a\overline{\tau}<0\}.$
\end{proof}

The Lemmas \ref{c} and \ref{p31} ensures the existence of solutions such that the function $\tau(s)$ is unbounded at least one of the ends and that if $\psi(s)$ is non-trivial solution with $I=\mathbb{R}$, then $-1<a\tau(s)<1$ for all $s \in \mathbb{R}$. In the next lemma, we show that if $I=\mathbb{R}$ and $\psi(0) \in C\cup S$, then $\tau(s)$ has at least one zero.
\begin{lemma} \label{c3}
	Let $\psi(s)=(\alpha(s),\tau(s),\eta(s))$ be a solution non-trivial  of \eqref{s2} defined on the maximal interval $I$, $a>0$, $\epsilon=1$, $s_0 \in I$ and initial condition $\psi(0)\in  C\cup S$, where $C$ and $S$ are given by \eqref{h}. If $I=\mathbb{R}$, then $\tau(s)$ has at least one zero.
\end{lemma}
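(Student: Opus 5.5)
Argue by contradiction: assume $I=\mathbb{R}$ and $\tau(s)\neq 0$ for all $s$. Then $\tau$ has constant sign. The trivial solutions of Lemma \ref{tri} are excluded, in particular $a=1$, $\tau\equiv\pm1$. The plan has three steps: fix the behaviour of $\psi$ at both ends from the earlier lemmas, then rule out the case $\psi(0)\in C$ with a monotone quantity, then rule out $\psi(0)\in S$ using the sign rules of Lemma \ref{ll} for critical points of $\tau$.

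\emph{Ends.} Since $\omega_\pm=\pm\infty$, the unbounded alternatives in Lemmas \ref{p1} and \ref{p2} are impossible, so $\tau$ is bounded near both ends. Lemma \ref{p1} then gives $a\tau(s)\to\pm1$, $\eta(s)\to+\infty$ and $|\alpha(s)|\to+\infty$ as $s\to-\infty$. As $s\to+\infty$, Lemma \ref{14} gives $(\alpha,\tau,\eta)\to(0,0,0)$ for $\psi(0)\in C$. For $\psi(0)\in S$, Lemma \ref{p2} gives $a\tau\to\pm1$, $\eta\to-\infty$ and $|\alpha|\to\infty$. Because $\tau$ has fixed sign, both limits of $a\tau$ equal $1$ when $\tau>0$ and $-1$ when $\tau<0$.

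\emph{Case $C$, with $\tau>0$.} Here $\eta\geq|\alpha|$, so $\alpha+\eta\geq 0$, and from \eqref{s2} we get $(\alpha+\eta)'=\tau(1-a\tau)$. At $+\infty$ we have $\alpha+\eta\to0$. At $-\infty$, $\alpha$ is increasing and tends to $0$ at $+\infty$, so $\alpha<0$ and $\alpha\to-\infty$. Hence $\alpha+\eta=\tau^2/(\eta-\alpha)\to0$, since $\eta-\alpha\to+\infty$.

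Next, $a\tau\leq1$ everywhere. Otherwise, since the limits of $a\tau$ at the ends are $1$ and $0$, $\tau$ would attain an interior global maximum with $a\tau>1$. But Lemma \ref{ll} says every critical point of $\tau$ in $C$ satisfies $a|\tau|<1$, a contradiction.

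So $\alpha+\eta$ is nondecreasing with limit $0$ at both ends, hence constant. Then $\tau(1-a\tau)\equiv0$, which forces $a\tau\equiv1$ and contradicts $\tau\to0$.

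For $\tau<0$ the argument is symmetric: use $\eta-\alpha\geq0$ with $(\eta-\alpha)'=-\tau(1+a\tau)$, and $a\tau\geq-1$.

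\emph{Case $S$.} Say $\tau>0$, so $a\tau\to1$ at both ends. Since $\tau\not\equiv1/a$, $\tau$ takes some value different from $1/a$.
\begin{itemize}
\item If $a\tau>1$ somewhere, $\tau$ attains an interior global maximum with $a\tau>1$. Lemma \ref{ll} says such a critical point is a local minimum, a contradiction.
\item If $0<a\tau<1$ somewhere, $\tau$ attains an interior global minimum with $0<a\tau<1$. Lemma \ref{ll} says such a critical point is a local maximum, again a contradiction.
\end{itemize}
The case $\tau<0$ is identical, using the rules for $a\tau<-1$ and $-1<a\tau<0$. Hence $\tau$ must vanish somewhere.

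The step I expect to need the most care is the $C$ case. There the end limits of $\tau$ differ ($1/a$ versus $0$), so Lemma \ref{ll} alone does not suffice and the monotone quantity $\alpha\pm\eta$ is needed. One must verify that $\alpha\pm\eta\to0$ at $-\infty$ using $\eta^2-\alpha^2=\tau^2$, bounded, together with $\eta\mp\alpha\to\infty$, and check the sign of $\alpha$ coming from monotonicity.
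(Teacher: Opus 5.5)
Your proof is correct and follows the paper's argument. In the $C$ case you use the same auxiliary functions $\alpha\pm\eta$, with $(\alpha\pm\eta)'=\tau(1\mp a\tau)$ tending to $0$ at both ends, and in the $S$ case the same critical-point sign rules from Lemma \ref{ll}. The only difference is that you derive the needed bound on $a\tau$ directly from Lemma \ref{ll} by a global-extremum argument, instead of citing Lemmas \ref{c} and \ref{p31} for $-1<a\tau<1$; this also avoids the hypothesis $a\neq 1$ that Lemma \ref{p31} requires.
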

\begin{proof}
If $I=\mathbb{R}$, then by Lemmas \ref{c} and \ref{p31} we have that $-1<a\tau(s)<1$ for all $s \in \mathbb{R}$. 

Suppose that $\psi(0) \in C$ and $I=\mathbb{R}$, then $\alpha^2(s)-\eta^2(s)=[\alpha(s)+\eta(s)][\alpha(s)-\eta(s)]=-\tau^{2}$, $\alpha^2(s)<\eta^{2}(s)$, and by Lemma \ref{p1} we have that $\lim_{s \to -\infty}a\tau(s)=\pm 1$, $\lim_{s \to -\infty}|\alpha(s)|=\lim_{s \to -\infty}\eta(s)=+\infty$ and $\lim_{s \to +\infty}\alpha(s)=\lim_{s \to +\infty}\tau(s)=\lim_{s \to +\infty}\eta(s)=0$. If  $\lim_{s \to -\infty}a\tau(s)= 1$, then $\lim_{s \to -\infty}-\alpha(s)=\lim_{s \to -\infty}\eta(s)=+\infty$, $\lim_{s \to -\infty}\alpha(s)-\eta(s)=-\infty$, $f(s)=\alpha(s)+\eta(s)>0$, $\lim_{s \to \pm\infty}f(s)=0$. Hence, the function $f(s)$ has at the least one local maximum point $s_0 \in \mathbb{R}$, that is, $f^{\prime}(s_0)=0$. As $f^{\prime}(s)=\tau(s)[1-a\tau(s)]$ and $-1<a\tau(s)<1$ for all $s \in \mathbb{R}$, then there exist $s_0$ such that $\tau(s_0)=0$. In an analogous manner, taking $\lim_{s \to -\infty}a\tau(s)= -1$, we obtain that $g(s)=\alpha(s)-\eta(s)<0$, $\lim_{s \to \pm\infty}g(s)=0$, that is, there exist $s_0$ such that $\tau(s_0)=0$.  

Take $\psi(0) \in S$, $I=\mathbb{R}$. It follows from \ref{ll} that $\tau(s)$ does not have a local minimum point if $0<a\tau(s)<1$ and does not have a local maximum point if $-1<a\tau(s)<0$; then $\tau(s)$ has at least one zero.
\end{proof}
 Thus, if there exist solutions with $I =\mathbb{R}$ and whose curvature function has no zeros, then $\psi(0) \in H$ and $a \geq 2$, since $0 < a < 2$ the solutions spirals to singular solution $\psi(s) = (0, 0, 1)$, $s \in \mathbb{R}.$

\textbf{Proof of Theorem \ref{c8}.} The proof of the first part is similar to the proof of the first part of Theorem \ref{c9}. Thus, consider $v \in \mathbb{R}_1^3\setminus \{0\}$ and the maximal solution $\psi(s)=(\alpha(s),\tau(s), \eta(s))$ corresponding to the soliton solution of the CF $X(s)$, $s\in I$ of the curvature function $k(s)=a\tau(s).$ 

In general, the Lemmas \ref{14}, \ref{p1} and \ref{p2} conclude that the curvature function at each end is either unbounded or tends to one of the following constants $\{-1,0,1\}$.

It follows from Lemma \ref{14} that if $\psi(0) \in H\cup C$, then $\omega_+=+\infty$ and $\lim_{s \to +\infty}k(s)=0$. From Lemma \ref{lem3}, we have that if $|v|<2$ and $\psi(0) \in H$, the function $\tau(s)$ has infinitely many zeros, and if $|v|\geq 2$, it has at most a finite number of zeros. It follows from Lemma \ref{c} that there exist complete spacelike soliton solutions. By Lemmas \ref{c} and \ref{p31}, there exist soliton solutions whose curvature function is unbounded on at least one end. Moreover, by Lemma \ref{p31}, there exist soliton solutions whose curvature function has no zeros and is unbounded on each end.

In the next section, we obtain results for soliton solutions to the ICF using information from the soliton solutions to the CF on $\mathbb{S}^{2}_{1}$ and $\mathbb{H}^{2}$. For the study of the inverse flow on De Sitter space, using the ordinary differential equations directly to list which curves can be soliton solutions, see the article. \cite{TIAN}.

\subsection{Soliton solutions to the inverse curvature flow on the 2-dimensional De Sitter space and hyperbolic space}

An application of Theorem \ref{c9} is the study of the timelike soliton solutions to the inverse curvature flow on De Sitter space.
\begin{theorem}\label{f1}
There is a 2-parameter family of non-trivial timelike soliton solutions to the inverse curvature flow on the 2-dimensional De Sitter space. There exist no non-trivial timelike complete soliton solutions. The curvature function either tends to zero at both ends, or it is unbounded at one end and converges to zero at the other. Moreover, when the curvature function tends to zero at both ends, then it is bounded.
\end{theorem}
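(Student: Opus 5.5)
The plan is to deduce Theorem \ref{f1} directly from Theorem \ref{c9}, using the correspondence of Theorem \ref{12}(ii) and Proposition \ref{p11} at the soliton level.

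\textbf{Step 1: transfer solitons.} Let $X$ be a timelike soliton solution to the CF on $\mathbb{S}^2_1$ with $k\neq 0$ on $I$. By Proposition \ref{p11}, its normal curve $Y=N$ is a timelike soliton solution to the ICF on $\mathbb{S}^2_1$. By \eqref{N}, $Y$ has curvature $k_Y=-1/k$ and normal $-X$. Conversely, if $Y$ is a timelike ICF soliton (so $k_Y\neq 0$ by definition), its normal curve is a timelike CF soliton with curvature $-1/k_Y$. Applying the normal-curve map twice returns $X$ up to sign, so this is a bijection between timelike CF solitons without curvature zeros and timelike ICF solitons. Both classes are also characterized through Theorem \ref{c3t2}, with $\langle T,v\rangle = k$ and $\langle T_Y,v\rangle=1/k_Y$; since $T_Y=T$ up to sign, the same vector $v$ (up to sign) is used on both sides.

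\textbf{Step 2: the family and the cases.} In the proof of Theorem \ref{c9}, the initial conditions in $H\cup C$, and in $S$ with $\tau(0)\neq 0$, form an open 2-parameter family, and they give $\tau\neq 0$ on the whole interval. Transporting this family gives the 2-parameter family of ICF solitons. The cases of Theorem \ref{c9} with no zeros of $k$ are then translated one at a time.
\begin{itemize}
\item[(a)] If $|k|$ is unbounded at both ends, then $k_Y=-1/k\to 0$ at both ends.
\item[(b)] If $|k|$ is unbounded at one end and $k\to 0$ at the other, then $k_Y\to 0$ at the first end and $|k_Y|\to\infty$ at the second.
\end{itemize}
The case of Theorem \ref{c9} in which $k$ has a unique zero is excluded, because $k_Y$ would not be defined there. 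The arc-length parameter of $Y$ is $\int|k|\,ds$ (since $w=|k|$), and this reparametrization is monotone, so the statements about ends carry over.

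\textbf{Step 3: boundedness in case (a).} Here $k$ has no zeros, so by the reflection convention $\tau>0$, and by Lemma \ref{lem25} $\tau$ attains a positive global minimum $\tau(s_0)$. When $\psi(0)\in H$, Lemma \ref{lem26} provides the critical point. When $\psi(0)\in C\cup S$ and $\tau$ is unbounded at both ends, $\tau$ cannot be monotone, so it has a critical point. It follows that $|k_Y|\le 1/(a\tau(s_0))$, so $k_Y$ is bounded.

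\textbf{Step 4: non-completeness.} In case (a), $\omega_\pm$ are finite by Lemma \ref{un}, but the length of $Y$ is $\int_I |k|\,ds$ with $|k|\to\infty$ at the ends, so finiteness of $I$ does not settle completeness. I expect this to be the main obstacle. My first attempt is to use the asymptotics from Lemma \ref{p}: $\tau\sim\pm\eta$ and $\eta'=a\tau^2$. These give $\eta'\approx a\eta^2$, so $\eta\sim 1/(a(\omega_+-s))$ and $|k|\sim 1/(\omega_+-s)$, and then $\int |k|\,ds$ diverges logarithmically. That would make $Y$ complete in case (a), which contradicts the statement. I would therefore check this delicately, for instance by computing $\int |k|\,ds=\int a|\tau|\,ds$ exactly via $d\eta=a\tau^2\,ds$, which gives $\int |k|\,ds=\int d\eta/|\tau|\approx\int d\eta/\eta$. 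If this divergence is confirmed, the argument must come from elsewhere. Either completeness is meant in the original parameter $s$ on the finite interval $I$, or case (b) supplies the incompleteness: there the end with $k\to 0$ has $\omega=\pm\infty$, and near the end where $|k_Y|\to\infty$ the length $\int|k|\,ds$ stays finite since $k\to 0$ exponentially fast by the hyperbolic linearization of Lemma \ref{lem2}. In that case I would argue non-completeness through the finite-length end in case (b), and settle case (a) by whatever convention the paper adopts for completeness.
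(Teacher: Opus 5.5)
Your Steps 1--3 follow the paper's route: transfer through Proposition \ref{p11}, then read off the ends from Theorem \ref{c9}. The genuine gap is Step 4, which does not prove the non-completeness clause. No argument can prove it, because your divergence computation is correct. Near an end where $|\eta|\to\infty$ (Lemma \ref{un}), $|d\alpha/d\eta|=1/(a|\tau|)\to 0$. This gives $|\alpha|\le|\eta|+C$ and hence $|\tau|\le C'|\eta|$. So the length of $Y$ there, $\int a|\tau|\,ds=\int|d\eta|/|\tau|$, is infinite. Case (a) genuinely occurs, for instance for every $\psi(0)\in H$, where $\tau^2=1+\alpha^2+\eta^2\ge 1$.

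You can confirm this directly on the ICF side, using the arc length $\sigma$ of $Y$ itself. Take $v=ae$ with $e=(-1,0,0)$. The soliton condition $k_Y=1/(a\tau_Y)$ turns the Frenet equations into $\eta_Y'=k_Y\tau_Y=1/a$ and $\alpha_Y'=\tau_Y=\sqrt{1+\alpha_Y^2+\eta_Y^2}$. The right-hand side is globally Lipschitz in $\alpha_Y$, so the solution exists for all $\sigma\in\mathbb{R}$. The construction of Proposition \ref{c4p2} then gives a non-trivial timelike ICF soliton defined on all of $\mathbb{R}$, with $0<|k_Y|\le 1/a$ and $k_Y\to 0$ at both ends. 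Hence, with completeness in the usual arc-length sense, the second sentence of the theorem is false.

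The paper reaches that sentence only through the line ``It follows from Theorem \ref{c9} that the interval $I$ cannot be extended to $\mathbb{R}$''. That is, it gives $Y$ the arc-length parameter of its normal curve, which is exactly the identification you rightly rejected since $ds_Y=|k|\,ds$. Your fallbacks do not close the gap. The solitons from case (b), and the half-curves from the unique-zero case, are indeed incomplete: at the end where $k\to0$, $\tau$ has constant sign and $\alpha$ converges, so the $Y$-length is $a|\Delta\alpha|<\infty$ while $|k_Y|\to\infty$. You do not need exponential decay for this. But this only exhibits some incomplete solitons. Finiteness of the parameter interval of $X$ is not a property of $Y$. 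What your method actually proves is a corrected statement: the ICF solitons coming from case (a) are complete, all the others are incomplete, and the remaining assertions of the theorem hold.

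Step 2 also needs two smaller repairs.
\begin{itemize}
\item Initial data in $S$ with $\tau(0)\neq0$ do not guarantee $\tau\neq 0$ on all of $I$, since the zero in the unique-zero case can lie anywhere.
\item That case is not excluded. As the paper does, restrict the CF soliton to $(\omega_-,s_0)$ and to $(s_0,\omega_+)$. This gives two maximal ICF solitons, each with $|k_Y|\to\infty$ at the end coming from $s_0$ and $k_Y\to0$ at the other end. Your Step 1 bijection includes such restrictions, so they belong in the ``either/or'' classification, where they fall into the second alternative.
\end{itemize}

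In Step 3 the critical-point analysis is unnecessary: a continuous function on an open interval with zero limits at both ends is bounded.
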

\begin{proof}
It follows from Proposition \ref{p11} that $X(s)$, $k(s)\neq 0$, for all $s$, is a timelike soliton to the CF if and only if the normal position is a timelike soliton to the ICF. Thus, by Theorem \ref{c9}, there is a 2-parameter family of non-trivial timelike soliton solutions to the inverse curvature flow on the 2-dimensional De Sitter space. Also, by Theorem \ref{c9}, if $X(s)$, $s\in I=(\omega_{-}, \omega_{+})$ is a maximal timelike soliton to the CF, then the curvature function has at most one zero. Taking, $k(s_0)=0$, then $\gamma(s)=N(s)$, $s \in (\omega_{-},s_0)$ and $\overline{\gamma}(s)=N(s)$, $s \in (s_0, \omega_{+})$ are soliton solutions to the ICF. 

Let $Y(s)$, $s \in I$, be a timelike maximal soliton solution to the ICF on the De Sitter space of curvature $k(s)\neq 0$. The normal vector field, $N_{Y}(s)$, is a timelike curve on the De Sitter space of curvature $1/k(s)$, and by Proposition \ref{p11} is a timelike soliton solution to the CF. It follows from Theorem \ref{c9} that the interval $I$ cannot be extended to $\mathbb{R}$. Moreover, if $1/k(s)$
 is unbounded at both ends, then the curvature is bounded and tends to zero at both ends. Conversely, if $1/k(s)$ tends to zero at one end and is unbounded at the other, then the curvature is unbounded at the corresponding end and tends to zero at the other.
\end{proof}

An application of Theorem \ref{c8} is the study of the soliton solutions to the inverse curvature flow on hyperbolic space.

\begin{theorem}\label{f2}
There is a 2-parameter family of non-trivial soliton solutions to the inverse curvature flow on the 2-dimensional hyperbolic space. At each end, the curvature function is either unbounded or tends to one of the following constants $\{-1,0,1\}$. In particular, there exist non-trivial soliton solutions whose curvature function is unbounded or tends to zero at both ends.
\end{theorem}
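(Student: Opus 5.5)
We transfer everything from Theorem \ref{c8} through the duality $X\mapsto N$, in the same way Theorem \ref{f1} was obtained from Theorem \ref{c9}. The basic tool is Proposition \ref{p11} together with Theorem \ref{12} iii). Let $Y(s)$, $s\in I$, be a curve on $\mathbb{H}^2$ with $k_Y(s)\neq 0$. Its normal field $N_Y(s)$ is a spacelike curve on $\mathbb{S}^2_1$, and by \eqref{N} its curvature is $-1/k_Y(s)$ after reorienting. Theorem \ref{12} iii), read in reverse, together with the soliton version in Proposition \ref{p11}, shows that $Y$ is a soliton solution to the ICF on $\mathbb{H}^2$ if and only if $N_Y$ is a spacelike soliton solution to the CF on $\mathbb{S}^2_1$.

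For the inverse direction, take a spacelike CF soliton $X$ on $\mathbb{S}^2_1$ on a maximal subinterval where $k\neq 0$. Its normal $N$ lies in $\mathbb{H}^2$ (or in $-\mathbb{H}^2$; composing with the isometry $-Id$ fixes this). Then $N$ is an ICF soliton on $\mathbb{H}^2$ with curvature $-1/k$. The two maps are mutually inverse up to sign, because $N_N=-X$.

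\textbf{Step 1 (existence).} Theorem \ref{c8} gives, for each $v$, a 2-parameter family of spacelike CF solitons, namely the initial conditions of \eqref{s2} on $H$, $C$ or $S$. On each maximal component of $\{k\neq0\}$ the normal curve gives an ICF soliton on $\mathbb{H}^2$. Non-triviality is preserved, since $k$ is non-constant if and only if $1/k$ is non-constant. This yields the 2-parameter family.

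\textbf{Step 2 (end behaviour).} At an end of an ICF soliton $Y$ on $\mathbb{H}^2$, the CF soliton $N_Y$ has curvature $\kappa=-1/k_Y$. An end of $Y$ is either an end of the maximal CF soliton or a zero $s_0$ of $\kappa$, where the splitting into components takes place. By Theorem \ref{c8} and Lemmas \ref{14}, \ref{p1}, \ref{p2}, at an end of the maximal CF solution $\kappa$ is either unbounded or tends to a constant in $\{-1,0,1\}$; at a zero of $\kappa$ we have $\kappa\to 0$. The correspondence is then:
\begin{itemize}
\item $|\kappa|\to\infty$ gives $k_Y\to 0$;
\item $\kappa\to 0$ gives $|k_Y|\to\infty$;
\item $\kappa\to\pm1$ gives $k_Y\to\mp1$.
\end{itemize}
So at each end $k_Y$ is unbounded or tends to a constant in $\{-1,0,1\}$.

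\textbf{Step 3 (the particular cases).} For curvature unbounded at both ends, take a CF soliton from Theorem \ref{c8} whose curvature has at least two zeros. Such solitons exist: in $H$ with $0<a<2$ there are infinitely many zeros, by Lemma \ref{lem3} and the spiralling toward $(0,0,1)$. The segment between two consecutive zeros gives an ICF soliton with $|k_Y|\to\infty$ at both ends. For curvature tending to zero at both ends, use the last part of Theorem \ref{c8} (Lemma \ref{p31}): it gives a CF soliton with no zeros and unbounded curvature at both ends, and its normal has $k_Y\to0$ at both ends.

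The main obstacle is making Step 2 rigorous at the ends produced by zeros of $\kappa$. There one must check that $N_Y$ on a component of $\{\kappa\neq0\}$ is genuinely a maximal ICF soliton, and that the orientation and sign conventions in \eqref{N} match the choice $k>0$. I would handle this by restricting to intervals on which $\kappa$ has constant sign and invoking the remark before \eqref{N}.
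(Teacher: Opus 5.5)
Your proposal is correct and follows the paper's route: transfer Theorem~\ref{c8} through Proposition~\ref{p11}, treating each component of $\{k\neq 0\}$ of a maximal spacelike CF soliton on $\mathbb{S}^2_1$ as a separate ICF soliton on $\mathbb{H}^2$. Your explicit end-correspondence and your two examples (a segment between consecutive zeros of a spiralling solution, and the zero-free solution of Lemma~\ref{p31}) spell out what the paper's brief proof leaves implicit, and the correspondence you actually need is Theorem~\ref{12}~i), spacelike CF on $\mathbb{S}^2_1$ versus ICF on $\mathbb{H}^2$, not iii), which concerns CF on $\mathbb{H}^2$.
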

\begin{proof}
It follows from Proposition \ref{p11} that $X(s)$ is a soliton solution to the ICF on the 2-dimensional hyperbolic space if and only if the normal position, $N(s)$, is a soliton solution to the CF on the 2-dimensional De Sitter space. Thus, by Theorem \ref{c8}, there is a 2-parameter family of non-trivial soliton solutions to the inverse curvature flow on the 2-dimensional hyperbolic space. Taking $\overline{N}(s)$ the maximal solution to the CF on De Sitter space, it follows from Theorem \ref{c8} that the curvature function can have no zeros, finitely many zeros, or infinitely many zeros, that is, $\overline{N}(s)$ can be associated an infinite number of soliton solutions to the ICF on 2-dimensional hyperbolic space. Thus, the results follow.
\end{proof}

Observe that, if there exists a non-trivial soliton solution to the curvature flow on $\mathbb{S}^{2}_{1}$ with curvature that never vanishes, then there exists a complete soliton solution to the ICF on the 2-dimensional hyperbolic space. 

 Similarly, we can obtain information about the ICF of spacelike curves in De Sitter Space using the theorem describing soliton solutions to the curve shortening flow on $\mathbb{H}^{2}$, Theorem 2.3 of \cite{Silva}. This theorem showed that: \it For any vector $v \in \mathbb{R}^{3}_1$, there is a 2-parameter family of non-trivial soliton solutions to the curve shortening flow on $\mathbb{H}^{2}$. There are three classes of soliton curves on $\mathbb{H}^{2}$, determined by the type of the vector $v$. Each soliton solution is an embedded curve $X(s)$ on $\mathbb{H}^{2}$, defined for all $s \in \mathbb{R}$. Moreover, at each end, the curvature function $k(s)$ tends to one of the constants $\{-1,0,1\}$. \rm Thus, if $X(s)$, $s \in I=(\gamma_{-}, \gamma_+)$ is a spacelike soliton solution to the ICF on the De Sitter space of the curvature $k(s)\neq 0$, then the normal position, $N(s)$, is a curve on the 2-dimensional hyperbolic space of the curvature $1/k(s)$. By Proposition \ref{c3t2}, $N(s)$ is a soliton solution to the curve shortening flow. Thus, at each end, the curvature function of $X(s)$ is either unbounded or tends to one of the following constants $\{-1,1\}$. If $\lim_{s \to \gamma_{-}} k(s)=\pm 1$(respectively $\lim_{s \to \gamma_{+}} k(s)=\pm 1$), then $\gamma_{-}=-\infty$ (respectively, $\gamma_{+}=+\infty$). Moreover, if there exists a non-trivial soliton solution to the curve shortening flow on $\mathbb{H}^{2}$ with curvature that never vanishes, then there exists a complete spacelike soliton solution to the ICF on the De Sitter space. 
\section{Visualizing some soliton solutions to the CF and ICF}
In this section, we visualize some examples of soliton solutions to the CF on the 2-dimensional De Sitter space and soliton solutions to the ICF on the 2-dimensional hyperbolic space and De Sitter space. For this, consider
$$X(s)=\chi(\theta(s),\rho(s))=(\sinh(\rho(s)), \cos(\theta(s))\cosh(\rho(s)),\sin(\theta(s))\cosh(\rho(s)))$$
a timelike or spacelike curve parametrized by arc length $s$ and the curvature function $k(s)$. Thus,
\begin{eqnarray*}
T(s)&=&(
\cosh(\rho(s)) \, \rho'(s), \;
\cos(\theta(s)) \sinh(\rho(s)) \, \rho'(s) - \cosh(\rho(s)) \sin(\theta(s)) \, \theta'(s), \\
&&
\sin(\theta(s)) \sinh(\rho(s)) \, \rho'(s) + \cos(\theta(s)) \cosh(\rho(s)) \, \theta'(s)),
\end{eqnarray*}
$-\left[ \rho^{\prime}(s)\right]^2+\left[\theta^{\prime}(s)\cosh(\rho(s)\right]^2=\epsilon$, $\epsilon \in \{-1,1\}$
and
\begin{eqnarray*}
N(s)&=& \left(-\cosh^2(\rho(s)) \, \theta'(s),
\sin(\theta(s)) \, \rho'(s) - \cos(\theta(s)) \cosh(\rho(s)) \sinh(\rho(s)) \, \theta'(s),\right.\\
&&\left.-\cos(\theta(s)) \, \rho'(s) - \cosh(\rho(s)) \sin(\theta(s)) \sinh(\rho(s)) \, \theta'(s)\right).
\end{eqnarray*}
If $\epsilon=1$ and $k(s)\neq0$, then $N(s)$ is curve on the 2-dimensional hyperbolic space. Analogously, if $\epsilon=-1$ and $k(s)\neq0$, then $N(s)$ is a curve on the 2-dimensional De Sitter space. Moreover, $X(s)$ satisfies the following equation.
\begin{equation}\label{p4}
\left\{\begin{array}{ll}
	\cosh(\rho(s))\left[ \rho''(s)\theta'(s) - \rho'(s)\theta''(s) \right] 
+ \sinh(\rho(s))\,\theta'(s)\left[ -(\rho'(s))^2+\epsilon \right]= k(s),\\
-\left[ \rho^{\prime}(s)\right]^2+\left[\theta^{\prime}(s)\cosh(\rho(s)\right]^2=\epsilon.
\end{array} \right.
\end{equation}
The following figures are constructed from the numerical solutions of the system \eqref{p4} and Theorem \ref{c3t2}, that is, $X(s)$ is spacelike or timelike soliton solutions to the CF on the $\mathbb{S}^{2}_{1}$ if and only if $k(s)=\langle T(s),v\rangle$, where $v \in \mathbb{R}^{3}_{1}\setminus \{0\}$. We also observe that to construct graphs in two-dimensional hyperbolic space, we consider the hyperboloid model.
\begin{figure}[ht]
	\centering	
	\subfloat[]{\includegraphics[height=2.5 cm]{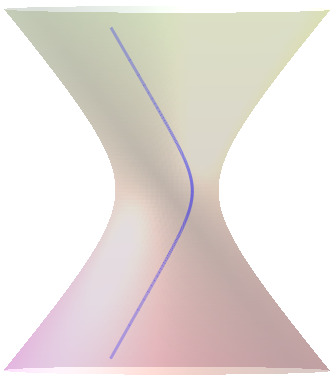}
\label{f1a}
} 
\quad
\subfloat[]{
\includegraphics[height=2.5 cm]{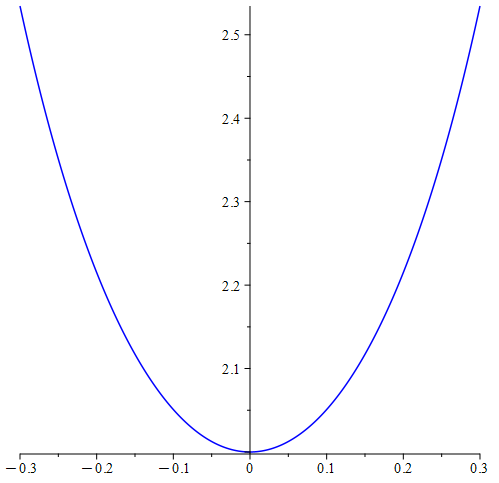}
\label{f1b}
}
\quad	
	\subfloat[]{
		\includegraphics[height=2.5 cm]{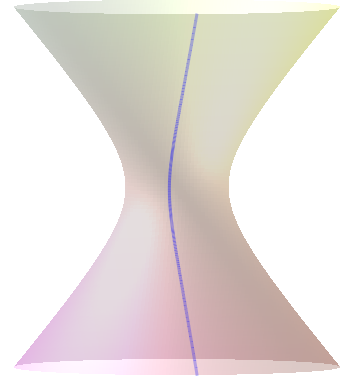}
\label{f1c}
	}
	\caption{Timelike soliton solution to the CF and ICF on $\mathbb{S}^{2}_{1}$ with $a=2$, $v=(1,0,0)$.}
	\label{fig10}
\end{figure} 

As shown in Theorem \ref{c9}, the curvature function of the timelike solution to the CF on the De Sitter space has at most one zero. 

In Figure \ref{f1a}, we present a timelike soliton solution to the CF with $k(s)>0$ for all $s \in I$. In Figure \ref{f1b}, we present the graph of the curvature function, and in Figure \ref{f1c}, the timelike soliton solution to the ICF on De Sitter space associated with the timelike soliton of Figure \ref{f1a}.

In Figure \ref{f2a}, we present a timelike soliton to the CF with $k(s_0)=0$ for some $s_0 \in I$. In Figure \ref{f2b}, we present the graph of the curvature function, and in Figure \ref{f2c}, the timelike soliton solutions to the ICF on De Sitter space associated with the timelike soliton of Figure \ref{f2a}. As $k(s_0)=0$ for some $s_0 \in I$, we have the two timelike soliton solutions to the ICF associated with the timelike soliton of Figure \ref{f2a}.

\begin{figure}[ht]
	\centering	
	\subfloat[]{\includegraphics[height=2.5 cm]{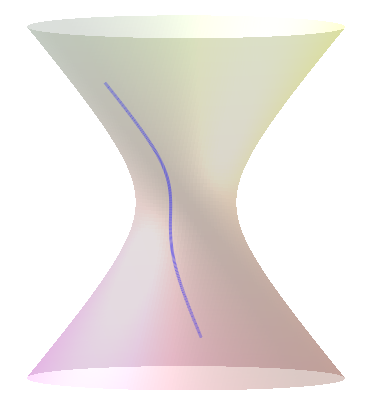}
\label{f2a}
} 
\quad
\subfloat[]{
\includegraphics[height=2.5 cm]{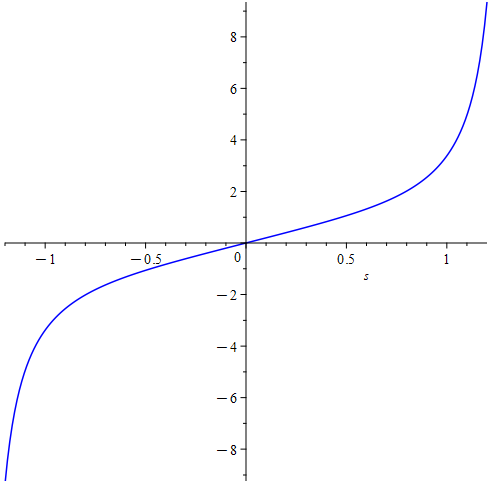}
\label{f2b}
}
\quad	
	\subfloat[]{
		\includegraphics[height=2.5 cm]{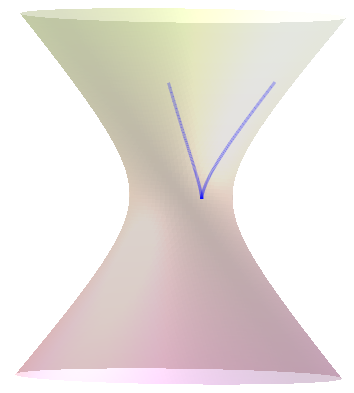}
\label{f2c}
	}
	\caption{Timelike soliton solution to the CF and ICF on $\mathbb{S}^{2}_{1}$ with $a=2$, $v=(0,0,1)$.}
	\label{fig1}
\end{figure} 
In Theorem \ref{c8}, the curvature function at each end is either unbounded or tends to one of the following constants $\{-1,0,1\}$. The curvature function can have no zeros, finitely many zeros, or infinitely many zeros. There exist non-trivial spacelike complete soliton solutions. Moreover, there exist non-trivial spacelike soliton solutions such that the curvature function has no zeros and is unbounded at each end. We do not present graphs of complete non-trivial solutions because determining the initial conditions is difficult.

In Figure \ref{f3a}, we present a spacelike soliton to the CF such that the curvature has infinitely many zeros, that is, $v$ is a timelike vector and $0<a<2$. In Figure \ref{f3b}, we present the graph of the curvature function, and in Figure \ref{f3c}, we present some soliton solutions to the ICF on the 2-dimensional hyperbolic space associated with the spacelike soliton of Figure \ref{f3a}. For better visualization, in Figure \ref{f3d} we project the curve of Figure \ref{f3c} onto the canonical plane, which does not contain the hyperboloid's rotation axis. As $k(s)$ has infinitely many zeros, we have infinitely many soliton solutions to the ICF on $\mathbb{H}^{2}$ associated with the spacelike soliton of Figure \ref{f3a} on $\mathbb{S}^{2}_1$.

\begin{figure}[ht]
	\centering	
	\subfloat[]{\includegraphics[height=2.3 cm]{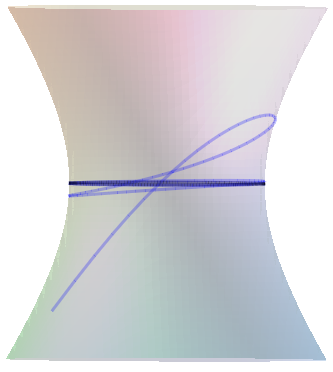}
\label{f3a}
} 
\quad
\subfloat[]{
\includegraphics[height=2.3 cm]{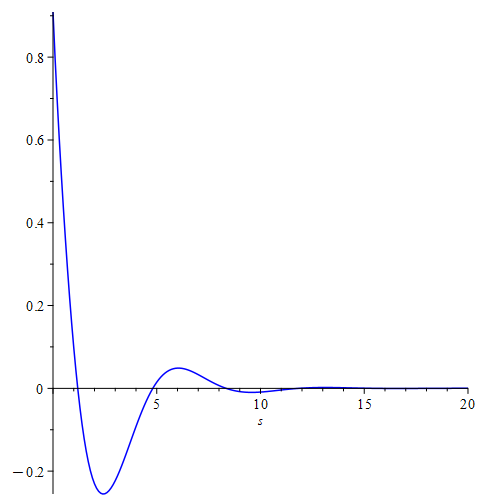}
\label{f3b}
}
\quad	
	\subfloat[]{
		\includegraphics[height=2.1 cm]{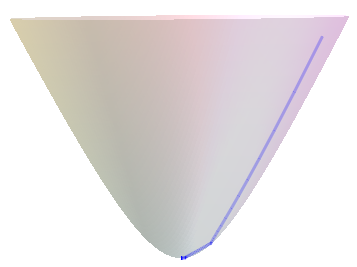}
\label{f3c}
	}
\quad	
	\subfloat[]{
		\includegraphics[height=2.3 cm]{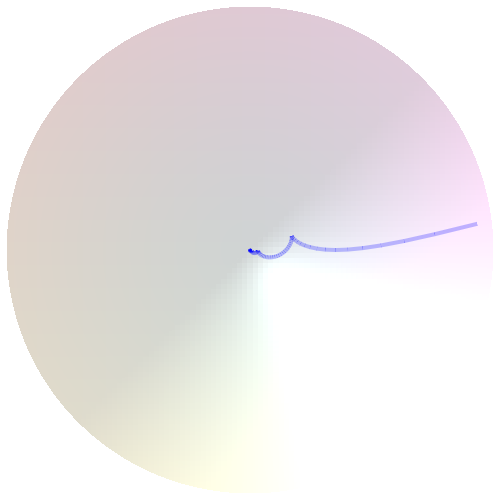}
\label{f3d}
	}
	\caption{Spacelike soliton solution to the CF on $\mathbb{S}^{2}_{1}$ and soliton solution to the ICF on $\mathbb{H}^{2}$ with $a=\frac{10}{11}$, $v=(-1,0,0)$.}
	\label{fig11}
\end{figure} 
In Figure \ref{f4a}, we present a spacelike soliton to the CF such that the curvature has at most finitely many zeros, that is, $v$ is a timelike vector and $a\geq 2$. In Figure \ref{f4b}, we present the graph of the curvature function, and in Figure \ref{f4c}, we present some soliton solutions to the ICF on the 2-dimensional hyperbolic space associated with the spacelike soliton from Figure \ref{f4a}. For better visualization, in Figure \ref{f4c} we project the curve of Figure \ref{f4c} onto the canonical plane, which does not contain the hyperboloid's rotation axis.
\begin{figure}[ht]
	\centering	
	\subfloat[]{\includegraphics[height=2.3 cm]{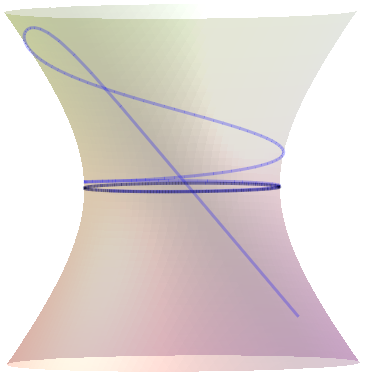}
\label{f4a}
} 
\quad
\subfloat[]{
\includegraphics[height=2.3 cm]{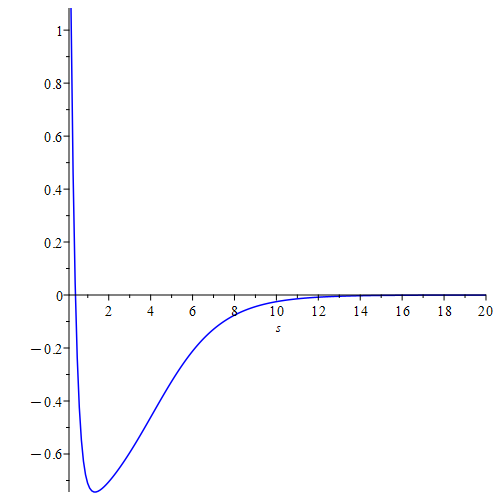}
\label{f4b}
}
\quad	
	\subfloat[]{
		\includegraphics[height=2.5 cm]{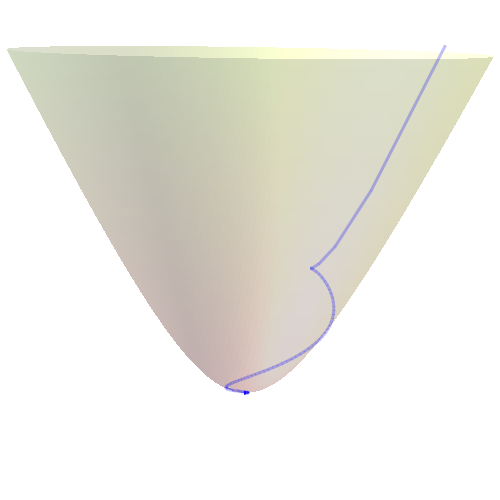}
\label{f4c}
	}
\quad	
	\subfloat[]{
		\includegraphics[height=2.3 cm]{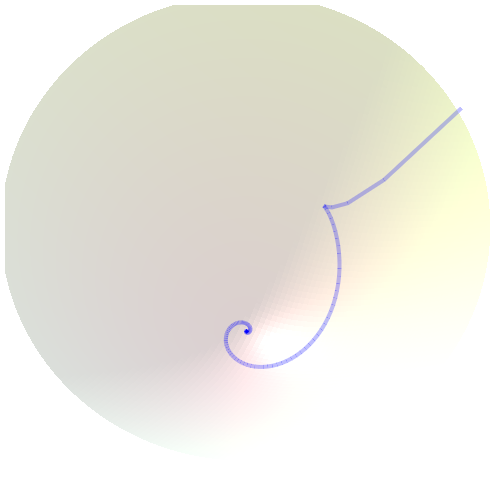}
\label{f4d}
	}
	\caption{Spacelike soliton solution to the CF on $\mathbb{S}^{2}_{1}$ and soliton solution to the ICF on $\mathbb{H}^{2}$ with $a=\frac{7}{3}$, $v=(-1,0,0)$.}
	\label{fig12}
\end{figure} 

In Figure \ref{f5a}, we present a spacelike soliton to the CF, where $v$ is a lightlike vector. In Figure \ref{f5b}, we present the graph of the curvature function, and in Figure \ref{f5c}, we present some soliton solutions to the ICF on the 2-dimensional hyperbolic space associated with the spacelike soliton of Figure \ref{f5a}. For better visualization, in Figure \ref{f5c} we project the curve of Figure \ref{f5c} onto the canonical plane, which does not contain the hyperboloid's rotation axis.

\begin{figure}[ht]
	\centering	
	\subfloat[]{\includegraphics[height=2.3 cm]{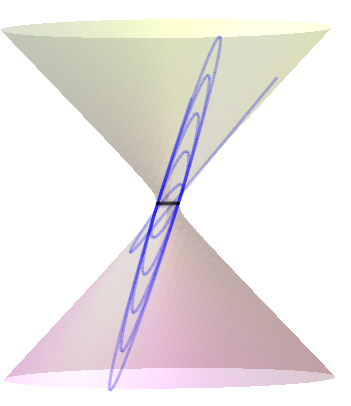}
\label{f5a}
} 
\quad
\subfloat[]{
\includegraphics[height=2.3 cm]{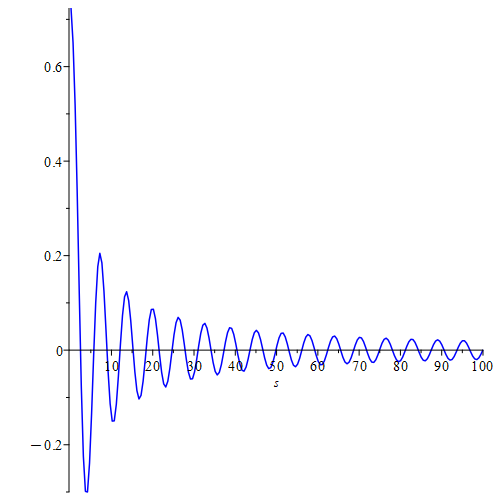}
\label{f5b}
}
\quad	
	\subfloat[]{
		\includegraphics[height=2.3 cm]{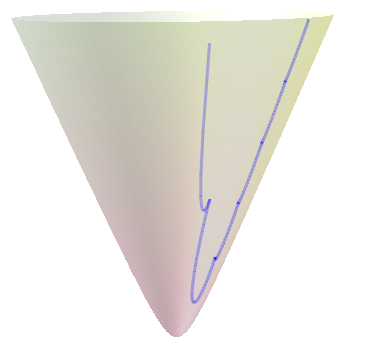}
\label{f5c}
	}
\quad	
	\subfloat[]{
		\includegraphics[height=2.3 cm]{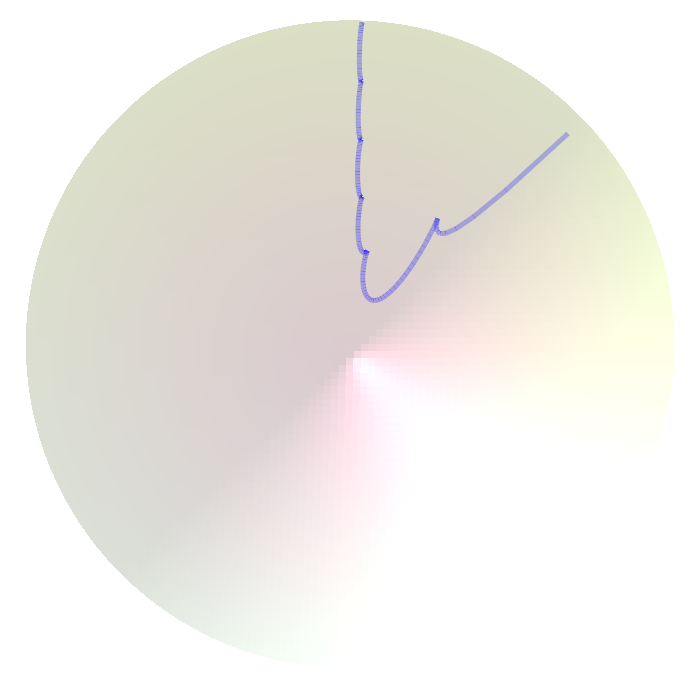}
\label{f5d}
	}
	\caption{Spacelike soliton solution to the CF on $\mathbb{S}^{2}_{1}$ and soliton solution to the ICF on $\mathbb{H}^{2}$ with $a=2$, $v=(-1,1,0)$.}
	\label{fig13}
\end{figure} 

In Figure \ref{f6a}, we present the spacelike soliton to the CF, where $v$ is a spacelike vector and the curvature function has no zeros. In Figure \ref{f6b}, we present the graph of the curvature function, and in Figure \ref{f6c}, the soliton solutions to the ICF on the 2-dimensional hyperbolic space associated with the spacelike soliton of Figure \ref{f6a}.

\begin{figure}[ht]
	\centering	
	\subfloat[]{\includegraphics[height=2.3 cm]{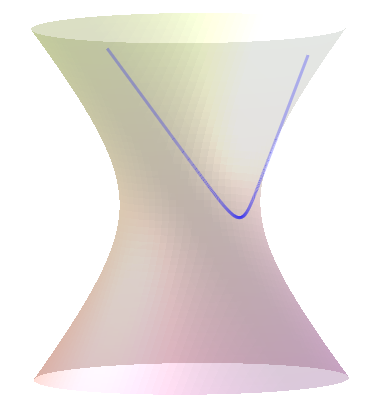}
\label{f6a}
} 
\quad
\subfloat[]{
\includegraphics[height=2.3 cm]{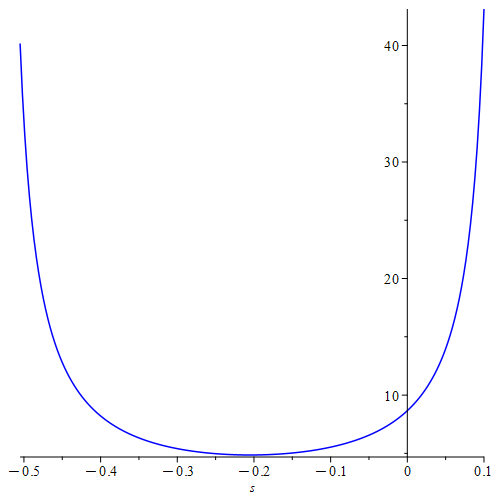}
\label{f6b}
}
\quad	
	\subfloat[]{
		\includegraphics[height=2.3 cm]{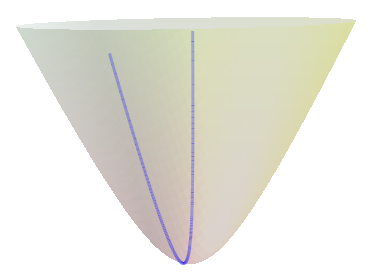}
\label{f6c}
	}
	\caption{Spacelike soliton solution to the CF on $\mathbb{S}^{2}_{1}$ and soliton solution to the ICF on $\mathbb{H}^{2}$ with $a=5$, $v=(0,0,1)$.}
	\label{fig14}
\end{figure}

Finally, in Figure \ref{f7a}, we present the spacelike soliton to the CF, where $v$ is a spacelike vector. In Figure \ref{f7b}, we present the graph of the curvature function. In Figure \ref{f7c}, we show some soliton solutions to the ICF on the 2-dimensional hyperbolic space associated with the spacelike soliton of Figure \ref{f7a}.

\begin{figure}[ht]
	\centering	
	\subfloat[]{\includegraphics[height=2.3 cm]{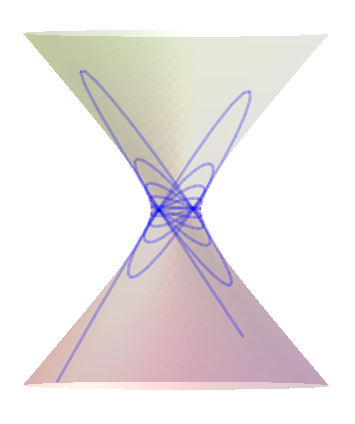}
\label{f7a}
} 
\quad
\subfloat[]{
\includegraphics[height=2.3 cm]{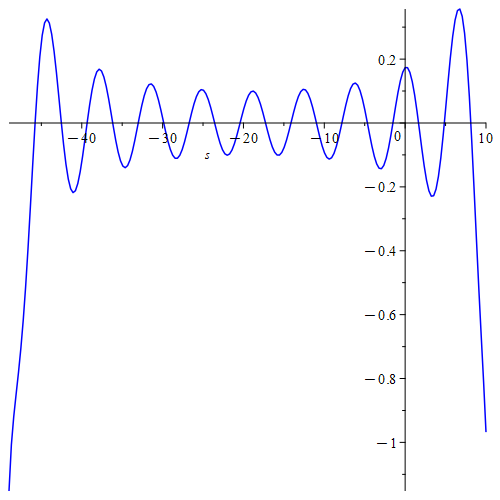}
\label{f7b}
}
\quad	
	\subfloat[]{
		\includegraphics[height=2.3 cm]{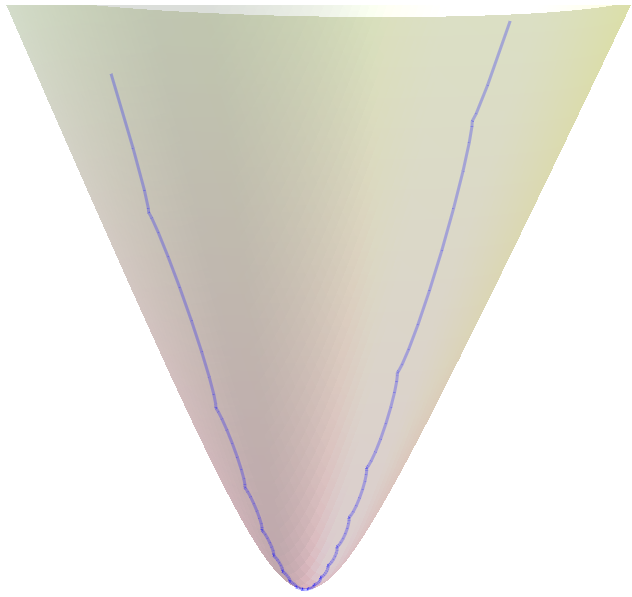}
\label{f7c}
	}
	\caption{Spacelike soliton solution to the CF on $\mathbb{S}^{2}_{1}$ and soliton solution to the ICF on $\mathbb{H}^{2}$ with $a=\frac{1}{10}$, $v=(0,0,1)$.}
	\label{fig15}
\end{figure}

\bibliographystyle{plain}
\bibliography{references} 

\end{document}